\newcommand{\Aset}{S}
\newcommand{\Bset}{T}
\newcommand{\RR}{\mathbb{R}}
\newcommand{\E}{\mathbf{E}}
\newcommand{\1}{\mathbf{1}}
\renewcommand{\P}{\mathbf{P}}
\newcommand{\Gg}{\mathcal{G}}
\newcommand{\Var}{\mathop{\mathrm{Var}}}
\newcommand{\defeq}{:=}
\newcommand{\abs}[1]{\lvert #1 \rvert}
\newcommand{\Poi}{\mathrm{Poi}}
\newcommand{\Bin}{\mathrm{Bin}}
\newcommand{\Ber}{\mathrm{Bernoulli}}
\newcommand{\GGG}{\mathfrak{G}}
\newcommand{\Snbrs}{\mathcal{S}}
\theoremstyle{plain}
\newtheorem{thm}{Theorem}[section]
\newtheorem{lemma}[thm]{Lemma}
\newtheorem{prop}[thm]{Proposition}
\theoremstyle{remark} 
\newtheorem{rmk}[thm]{Remark}
\theoremstyle{definition}
\newtheorem{defn}[thm]{Definition}
\newtheorem{example}[thm]{Example}
\newtheorem{claim}{Claim}
\newcommand{\sm}{\widetilde{\sigma}}
\newcommand{\neighbors}[2][]{\mathcal{N}\ifthenelse{\equal{#1}{}}{}{_{#1}}(#2)}
\newcommand{\nonneighbors}[2][]{\overline{\mathcal{N}}\ifthenelse{\equal{#1}{}}{}{_{#1}}(#2)}
\newcommand{\Bounded}{\textsc{Bounded}}
\renewcommand{\Bounded}{\mathcal{B}}
\newcommand{\eps}{\varepsilon}
\def \tran {\mathsf{T}}
\newcommand{\nc}{\newcommand}
\newcommand\dmo\DeclareMathOperator
\nc{\mL}{\mathcal{L}}
\nc{\mH}{\mathcal{H}}
\nc{\mJ}{\mathcal{J}}
\nc{\mI}{\mathcal{I}}
\nc{\eh}{h}
\nc{\mN}{\mathcal{N}}
\nc{\expo}[1]{\exp \left( #1 \rule{0mm}{3mm}\right)}
\dmo{\e}{\mathbb{E}}
\dmo{\pr}{\mathbf{P}}
\nc{\pro}[1]{\mathbf{P}\left[\, #1 \,\right]}
\nc{\bad}{\mathcal{B}}
\nc{\event}{\mathcal{E}}
\nc{\good}{\mathcal{G}}
\nc{\set}[1]{\left\{ #1 \right\}}
\nc{\eye}{S}
\nc{\jay}{T}
\dmo{\DP}{DP}
\nc{\sphere}{\mathbf{S}^{n-1}}
\dmo{\UTP}{UTP}
\dmo{\UUTP}{UUTP}
\dmo{\HS}{HS}
\dmo{\tmu}{\tilde{\mu}}
\dmo{\al}{\alpha}
\nc{\K}{K}
\nc{\Cd}{C_0}
\nc{\Cp}{C_0'}
\nc{\ER}{Erd\H os--R\'enyi }
\nc{\KS}{Kahn--Szemer\'edi }
\nc{\B}{B}
  \tikzset{vert/.style={circle,fill,inner sep=0,
          minimum size=0.125cm,draw}}
\begin{document}

\title[Size biased couplings and the spectral gap for random regular graphs]
{Size biased couplings and the spectral gap\\for random regular graphs}  

\author{Nicholas Cook}
\address{Department of Mathematics, Stanford University}
\email{nickcook@stanford.edu}

\author{Larry Goldstein}
\address{Department of Mathematics, University of Southern California}
\email{larry@usc.edu}

\author{Tobias Johnson}
\address{Department of Mathematics, University of Southern California}
\email{tobias.johnson@usc.edu}

\thanks{The first author was partially supported by NSF grant DMS-1266164, the second author by NSA-H98230-15-1-0250
and the third author by NSF grant DMS-1401479. The second and third authors also thank the
Institute for Mathematics and its Applications in Minneapolis, MN, where some of this research was carried out.}

\keywords{Second eigenvalue, random regular graph, Alon's conjecture, size biased coupling, Stein's method, concentration}

\subjclass[2010]{05C80, 60B20, 60E15}

\date{\today}

\begin{abstract}
Let $\lambda$ be the second largest eigenvalue in absolute value of
  a uniform random $d$-regular graph on $n$ vertices.
It was famously conjectured by Alon and proved by Friedman that if $d$ is fixed independent of $n$, then $\lambda=2\sqrt{d-1} +o(1)$ with high probability.
In the present work we show that $\lambda=O(\sqrt{d})$ continues to hold with high probability as long as $d=O(n^{2/3})$, making progress towards a conjecture of Vu that the bound holds for all $1\le d\le n/2$. 
Prior to this work the best result was obtained by Broder, Frieze, Suen and Upfal (1999) using the configuration model, which hits a barrier at $d=o(n^{1/2})$.
We are able to go beyond this barrier by proving concentration of measure results directly for the uniform distribution on $d$-regular graphs. 
These come as consequences of advances we make in the theory of concentration by size biased couplings. 
Specifically, we obtain Bennett--type tail estimates for random variables admitting certain unbounded size biased couplings.
\end{abstract}

\maketitle

\section{Introduction}

Let $A$ be the adjacency matrix of a $d$-regular graph (that is, a graph where every vertex has exactly $d$
neighbors), and let $\lambda_1(A)\geq \cdots\geq \lambda_n(A)$ be the eigenvalues of $A$.
The trivial eigenvalue $\lambda_1(A)$ is always equal to $d$;
the second eigenvalue $\lambda_2(A)$, on the other hand, 
has been the focus of much study over the last thirty years.
Alon and Milman demonstrated a close connection between a graph's second eigenvalue and its 
expansion properties \cite{AlonMilman}. Expander graphs were seen to be extraordinarily useful
for a range of applications in computer science and beyond (see \cite{HLW,Lubotzky} for good surveys). 
Alon and Boppana
proved a lower bound on $\lambda_2(A)$, showing it to be at least $2\sqrt{d-1}(1-O(1/\log^2 n))$ 
\cite{Alon86,Nilli91}. Alon conjectured
in \cite{Alon86} that if $A$ is the adjacency matrix of a \emph{random} $d$-regular graph, 
the eigenvalue~$\lambda_2(A)$ is at most
$2\sqrt{d-1}+o(1)$ with probability tending to $1$.

Now, take $A$ to be the adjacency matrix of a random graph chosen uniformly from all $d$-regular
graphs on $n$ vertices with no loops or parallel edges, which from now on we call a \emph{uniform
random $d$-regular simple graph on $n$ vertices}.
Let $\lambda(A)=\max(\lambda_2(A),-\lambda_n(A))$.
After pioneering work by Broder and Shamir \cite{BS}, Kahn and Szemer\'edi \cite{FKS},
and Friedman \cite{Friedman91}, Friedman proved Alon's conjecture in \cite{Friedman08}, showing that
for any fixed $d\geq 3$ and $\epsilon>0$,
\begin{align*}
  \lim_{n\to\infty}\P[\lambda(A)\leq 2\sqrt{d-1} + \epsilon] = 1.
\end{align*}
Also see \cite{Bordenave} for a simpler proof of this result.

This result is about sparse graphs; the number of vertices $n$ must be very large
compared to $d$ to obtain information about $\lambda(A)$. It is natural to ask about $\lambda(A)$
when both $n$ and $d$ are large. In \cite{BFSU}, it is shown 
that if $d=o(\sqrt{n})$, then $\lambda(A)=O(\sqrt{d})$ with probability tending to 1 as $n\rightarrow\infty$.	
Vu has conjectured that this holds for all $1\le d\le n/2$; see 
\eqref{conj:van} below for the more precise version of this conjecture made in print.
Our main result extends this bound to the range $d=O(n^{2/3})$:
\begin{thm}\label{thm:main}
  Let $A$ be the adjacency matrix of a uniform random $d$-regular simple graph on $n$ vertices.
  Let $\lambda_1(A)\geq\cdots\geq\lambda_n(A)$ be the eigenvalues of $A$, 
  and let $\lambda(A)=\max(\lambda_2(A),-\lambda_n(A))$.
  For any $\Cd ,\K >0$, there exists $\al >0$ depending only on $\Cd ,\K $ such that if $1\le d\le \Cd n^{2/3}$, then
  \begin{align*}
    \P\bigl[\lambda(A)\leq \al\sqrt{d}\bigr] \geq 1-n^{-\K }
  \end{align*}
  for all $n$ sufficiently large depending on $\K$.
\end{thm}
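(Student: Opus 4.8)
The plan is to bound $\lambda(A)$ via the standard Kahn--Szemerédi strategy, reducing control of the operator norm of $A$ restricted to the space orthogonal to the all-ones vector to control of the bilinear form $\langle x, A y\rangle$ over a net of pairs of vectors in the unit sphere $\sphere$. Concretely, I would first discretize: choose a $\frac14$-net $\mN$ of $\sphere$ of size $e^{O(n)}$, and reduce $\P[\lambda(A) > \alpha\sqrt d]$ to a union bound over $x,y\in\mN$ of events $\{\abs{\langle x, Ay\rangle} > c\alpha\sqrt d\}$, after projecting onto the orthogonal complement of $\mathbf 1$ (using that $A\mathbf 1 = d\mathbf 1$ and that $\langle \mathbf 1, A\mathbf 1\rangle/n = d$ is the trivial eigenvalue to subtract off). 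So it suffices to show that for each fixed pair $(x,y)$ of unit vectors, $\abs{\langle x, Ay\rangle - \e\langle x, Ay\rangle}$ exceeds $c\alpha\sqrt d$ with probability at most $e^{-C' n}$ for a constant $C'$ that can be made as large as we like by taking $\alpha$ large — this beats the $e^{O(n)}$ cardinality of the net and absorbs the $n^{-\K}$ loss.

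The heart of the argument — and the place where going beyond the $d = o(\sqrt n)$ barrier of \cite{BFSU} must happen — is the concentration of the linear statistic $f(A) = \langle x, A y\rangle = \sum_{\{i,j\}\in E} (x_i y_j + x_j y_i)$ under the \emph{uniform} measure on $d$-regular simple graphs, rather than under the configuration model. Here I would invoke the size biased coupling machinery developed earlier in the paper: construct a size biased coupling $A \preceq A'$ for $f$ (or for a suitable truncation of $f$) by resampling edges incident to a randomly chosen edge-endpoint, along the lines of switchings on $d$-regular graphs, and control the coupling's "cost" $\abs{f(A') - f(A)}$ and its conditional mean. Because the entries $x_i y_j$ are not uniformly bounded — a single coordinate can be as large as $1$ while most are of order $n^{-1/2}$ — the coupling is genuinely \emph{unbounded}, which is exactly why the Bennett--type tail inequalities for unbounded size biased couplings advertised in the abstract are needed: they should yield a bound of the form $\P[\abs{f - \e f} > t] \le \exp(-c\, t^2/(d + \text{(small correction)}))$ valid up to $t$ of order $\sqrt{dn}$, with the correction term controlled precisely when $d = O(n^{2/3})$.

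The main obstacle I anticipate is making the size biased coupling both valid and quantitatively sharp for the uniform $d$-regular distribution: switchings change several edges at once and can create loops or multi-edges that must be rejected, so one must show the rejection probability is small (this is where the $d = O(n^{2/3})$ threshold plausibly enters, since the density of "bad" configurations scales like $d^2/n$ or $d^3/n^2$), and simultaneously show the coupling increases $f$ in the stochastic-dominance sense required by the size bias definition. A secondary technical point is handling the unboundedness: I would split $x,y$ into a "bounded" part (coordinates of size $\lesssim \sqrt{d/n}$, say) and a "heavy" part supported on few coordinates, bound the bounded part's contribution with the size biased Bennett inequality, and bound the heavy part's contribution by a cruder argument exploiting that a fixed vertex has exactly $d$ neighbors (so $\sum_j A_{ij} x_j$ is a bounded-degree sum). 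Finally, one assembles the net bound, optimizes $\alpha$ against $\K$ and $\Cd$, and checks the exceptional-event bookkeeping — routine once the concentration estimate is in hand.
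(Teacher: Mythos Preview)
Your high-level architecture is right---Kahn--Szemer\'edi via a net, size biased couplings built from switchings for the concentration input, and a light/heavy split---but there is a structural gap in how you handle the heavy part.

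You write that it suffices to show $|\langle x, Ay\rangle - \E\langle x, Ay\rangle| > c\alpha\sqrt{d}$ with probability at most $e^{-C'n}$ for each fixed $(x,y)$, then union-bound over the net. This is precisely what does \emph{not} work, and the paper says so explicitly at the start of Section~\ref{sec:kahn.szemeredi}: for fixed $d$ the event $\{\lambda(A)>\alpha\sqrt{d}\}$ has only polynomially small probability, so no such exponential bound can hold for the full bilinear form. The Kahn--Szemer\'edi insight is that the heavy contribution is not controlled pointwise by a tail bound that beats the net. Instead one establishes a single \emph{discrepancy event}---that edge counts $e_A(S,T)$ are uniformly controlled for all $S,T\subset[n]$---holding with probability $1-n^{-K}$, and shows that on this event $f_{H(x)}(A)=O(\sqrt{d})$ \emph{deterministically} for every $x$ (Lemmas~\ref{lem:discrepancy} and~\ref{lem:heavy}). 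Only the light couples get the exponential concentration plus union bound. Your proposed substitute for the heavy part, ``a cruder argument exploiting that a fixed vertex has exactly $d$ neighbors,'' is not enough: $d$-regularity alone yields at best $O(\sqrt{kd})$ for a heavy set of $k$ coordinates, and $k$ is not $O(1)$. The discrepancy property is genuinely needed, and its proof uses the upper-tail half of the uniform tails property in its $h$-function form (the Poisson-rate decay for large deviations of $e_A(S,T)$, not merely the sub-Gaussian/Bernstein form).

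Your intuition for where $d=O(n^{2/3})$ enters is in the right direction but imprecise. It is not a rejection probability for loops/multi-edges scaling like $d^2/n$ or $d^3/n^2$; rather, the size biased coupling via double switchings is bounded with probability $1-O(d/n)$ (see \eqref{eq:double.uv.upcoupling}--\eqref{eq:double.uv.downcoupling}), so Theorem~\ref{thm:sizebias_concentration_bennett} gives concentration around $\mu/p$ with $p=1-O(d/n)$ rather than around $\mu$. Since the (positive-part) light-couple mean is of order $d$, this introduces a shift of order $d\cdot(d/n)=d^2/n$, which must be $O(\sqrt{d})$---hence $d=O(n^{2/3})$; see Remark~\ref{rmk:two.thirds}.
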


\begin{rmk}\label{rmk:intro.constants}
The proof shows we can take $\al=459652 + 229452\K + \max(30\Cd^{3/2},768)$ and $n\geq 7+K^2$, though we do not attempt to optimize these constants---see Remark~\ref{rmk:main.constants}.
\end{rmk}

\begin{rmk}	\label{rmk:d.upper.bound}
 The complement of a uniform random $d$-regular simple graph with adjacency matrix $A$
  is a uniform random $(n-d-1)$-regular simple graph whose adjacency matrix $B$ has entries $1-A_{ij}$
  for $i\neq j$ and $0$ on the diagonal. The nontrivial eigenvalues of $B$ are $-1-\lambda_i(A)$ for
  $i=2,\ldots,n$, which implies that $\abs{\lambda(A)-\lambda(B)}\leq 1$.
  As a consequence, if $\lambda(A)=O(\sqrt{d})$ for $1\leq d\leq n/2$, then $\lambda(A)=O(\sqrt{d})$ for
  the full range $1\leq d\leq n-1$.
\end{rmk}

Previous arguments to bound $\lambda(A)$ all proceeded by first establishing the bound for a different distribution on random regular multigraphs (in which loops and multiple edges are permitted), and then transferring the bound to the uniform distribution on $d$-regular simple graphs by some comparison procedure.
\cite{BFSU} work with random regular multigraphs drawn from
the \emph{configuration model} (see \cite{Wormald99} for a description). 
A key property of this model is that it gives the same probability to every simple $d$-regular graph on $n$ vertices.
This makes it possible to prove that properties hold with high probability for the uniform model by showing
that the probability of failure in the configuration model tends to zero faster than the probability
of being simple. 
When $d\gg n^{1/2}$, the probability that the configuration model being simple decays
faster than exponentially in $n$.
Estimating the spectral gap in this way would then require proving that the probabilities of relevant
events in the configuration model decay at this rate, and we are unaware of any methods to do so.

Other past work, including \cite{Friedman08} and \cite{FKS}, used the \emph{permutation model}, 
a random $2d$-regular multigraph whose adjacency matrix is the sum of $d$ independent uniform random permutation matrices and their transposes. It is proven in \cite{GJKW} that the permutation and configuration models
are \emph{contiguous}, allowing a second eigenvalue bound to be transferred from the
permutation model to the configuration model, from which it can
be transferred to the uniform model. Both of these transferences require $d$ to be
fixed independently of $n$.

The reason for using the permutation or configuration model rather than working directly with the uniform distribution on simple $d$-regular graphs is that the distributions have more independence or martingale structure.
In particular, this gives access to standard concentration estimates, which play a key role in the approach introduced by Kahn and Szemer\'edi for the permutation model \cite{FKS}, and adapted for the configuration model in \cite{BFSU}.
The argument, which borrows ideas from geometric functional analysis, is explained in more detail in Section~\ref{sec:kahn.szemeredi}.

In contrast to previous works, to prove Theorem~\ref{thm:main} we work directly with the uniform distribution on $d$-regular simple graphs.
A key obstacle is the lack of concentration estimates for this setting.
We obtain these using a method based on \emph{size biased couplings}, developed initially in \cite{GG}. 
These techniques are an offshoot of Stein's method for distributional approximation; see
Section~\ref{sec:size.biasing} for further discussion. The theory developed in
\cite{GG} and improved in \cite{AB} can show that a nonnegative random variable $X$ is concentrated
if there exists a bounded size biased coupling for $X$ (all of these terms are explained in 
Section~\ref{sec:size.biasing}). These results are analogues of an inequality of Hoeffding 
\cite[Theorem~1, line (2.1)]{Hoeff63} for sums of independent random variables, in which the bound
is in terms of the mean of the sum. To make size biasing
work in our situation, we extend the theory developed in \cite{GG,AB} in two ways.
First, we relax the condition that the coupling be bounded. Second, we prove an analogue of
Bennett's inequality \cite[equation~(8b)]{Bennett}, in which the concentration bound for a sum
is given in terms of its variance rather than its mean. 

We apply this theory in several ways besides proving our main result, Theorem~\ref{thm:main}.
In Theorem~\ref{thm:rrg.discrepancy}, we give an edge discrepancy result for random regular graphs,
showing concentration for the number of edges between two given sets.
We also apply size bias couplings to prove
Theorem~\ref{thm:perm.gap}, yielding second eigenvalue bounds for distributions of random
regular graphs constructed from independent random permutations. The case where the permutations
are chosen uniformly, often called the \emph{permutation model},
was considered in \cite{BS}, \cite{FKS}, \cite{Friedman91}, \cite{Friedman08}, and elsewhere, and a second
eigenvalue bound for the permutation model was previously proven in \cite[Theorem~24]{DJPP}.
We also consider the case that that the permutations have (not necessarily
identical) distributions invariant under conjugation and supported on permutations without fixed points.
A more graph theoretic interpretation is as follows. For even $d$, take $d/2$ independent
random $2$-regular loopless graphs, with no conditions on their distributions except for being
invariant under relabeling of vertices. Superimpose these graphs to make a random $d$-regular graph.
We show that this graph obeys a second eigenvalue bound, with no assumption on the distributions
of the individual $2$-regular graphs. We expect our size bias coupling results to be applicable
beyond random regular graphs as well.

Very recently (after the submission of this paper), Tikhomirov and Youssef proved that with high probability $\lambda(A) = O(\sqrt{d})$ for the range $n^\eps\le d\le n/2$, which extends Theorem~\ref{thm:main} to the full
range $d\leq n/2$ \cite{TY}. Their proof
first reduces the question to bounding the second singular value of the adjacency matrix
of a random \emph{directed} graph 
with fixed degree sequence. They tackle this by the Kahn--Szemer\'edi method, using an inequality
for linear forms that they prove with Freedman's martingale inequality.

One open problem beyond this is to determine the correct constant in the $O(\sqrt{d})$ bound. 
Vu conjectures that
\begin{equation}	\label{conj:van}
\lambda(A) = (2 + o(1))\sqrt{d(1-d/n)}
\end{equation}
whenever $d\le n/2$ tends to infinity with $n$ \cite{VuRDM}. 
There is also the problem of determining the \emph{fluctuations} of $\lambda(A)$ about this asymptotic value. 
Numerical simulations for small values of $d$ suggest that after centering and rescaling, $\lambda(A)$ asymptotically follows the $\beta=1$ Tracy--Widom law \cite{MNS}, which also describes the asymptotic fluctuations of the largest eigenvalue of matrices from the Gaussian Orthogonal Ensemble (GOE) \cite{TrWi}.
It was recently shown in \cite{BHKY} that fluctuations of eigenvalues of $A$ in the bulk of the spectrum (i.e.\ eigenvalues $\lambda_i$ with $\eps n \le i\le (1-\eps)n$ for some arbitrary fixed $\eps>0$) asymptotically match those of the GOE, assuming $n^\eps\le d\le n^{2/3-\eps}$ for arbitrary fixed $\eps>0$. 
It may be possible to extend their approach to establish the universal fluctuations of eigenvalues at the spectral edge for this range of $d$.

The argument in \cite{BHKY} relied on the \emph{local semicircle law} for uniform random regular graphs, which was established in \cite{BKY} for the range $\log^4n\le d\le n^{2/3}/\log^{4/3}n$, improving on earlier results in \cite{DuPa,TVW}. 
It follows from the local semicircle law that with high probability, the bulk of the spectrum of $A$ is confined to the scale $O(\sqrt{d})$. 
Theorem~\ref{thm:main} complements their result by saying that with high probability the \emph{entire} spectrum, with the exception of the Perron--Frobenius eigenvalue $\lambda_1=d$, lives at this scale.
It would follow from Vu's conjecture \eqref{conj:van} that $\lambda_2(A)$ and $\lambda_n(A)$ ``stick to the bulk", i.e.\ after rescaling by $\sqrt{d}$ these eigenvalues converge to the edge of the support $[-2,2]$ of the limiting spectral distribution. 
Interestingly, the limitation to $d\ll n^{2/3}$ in \cite{BKY} appears to be for reasons similar to our constraint $d=O(n^{2/3})$ in Theorem \ref{thm:main}, stemming from their use of double switchings (as described in Section \ref{sec:graph.couplings}).

\subsection{Organization of the paper}
The idea of the proof of Theorem~\ref{thm:main} is to prove concentration results for random regular
graphs by size biasing, and then to apply the Kahn--Szemer\'edi argument to derive eigenvalue
bounds from these concentration inequalities. Section~\ref{sec:spectral} presents this argument at a 
high level: Proposition~\ref{prop:general.utp} gives the concentration result and Proposition~\ref{prop:ontails} translates
it into eigenvalue bounds, with proofs deferred to later in the paper. The proof of Theorem~\ref{thm:main}
appears in Section~\ref{sec:high.level} and is a simple application of these two propositions.
This section also includes Theorem~\ref{thm:perm.gap}, which gives
eigenvalue bounds for distributions of random regular graphs derived from
independent random permutations.

Section~\ref{sec:size.biasing}, which is entirely self-contained, develops the theory of size biased
couplings for concentration. 
For the permutation models, it is easy to form size bias couplings that let us apply the results
of Section~\ref{sec:size.biasing}. For the uniform model, we construct the necessary couplings
in Section~\ref{sec:graph.couplings} using a combinatorial technique called \emph{switchings}.
We then apply size biasing in Section~\ref{sec:concentration} to establish
Proposition~\ref{prop:general.utp}, a concentration bound
for general linear functions of the adjacency matrices of random regular
graphs.
We specialize this to prove an edge discrepancy bound in
Theorem~\ref{thm:rrg.discrepancy}.
Section~\ref{sec:kahn.szemeredi} presents Kahn and Szemer\'edi's argument to prove Proposition~\ref{prop:ontails}, 
deducing a second eigenvalue bound given a concentration bound like Proposition~\ref{prop:general.utp}.

\subsection{Notations, definitions, and facts}
  The \emph{degree} of a vertex in a graph is the number of edges incident to it,
  or in a weighted graph, the sum of all edge weights incident to it.
  A loop in a graph contributes its weight twice to the degree of the vertex.
  A graph is $d$-regular if every vertex has degree~$d$.
  When considering $d$-regular graphs on $n$ vertices, we always assume that $nd$ is even.
  We also assume that $n\geq 5$ to avoid some pathologies.
  A graph is \emph{simple} if it contains no loops or parallel edges.
  
  For an adjacency matrix $A$, we define the set $\neighbors[A]{v}$ to be the neighbors of $v$ in the 
  graph corresponding to $A$;
  when it is clear which graph we are referring to, we omit the $A$. We define $\nonneighbors[A]{v}$
  as the vertices which are neither neighbors of $v$ nor $v$ itself in the graph corresponding
  to $A$.
  For $\Aset,\Bset\subseteq[n]$ and an adjacency matrix $A$, define the edge count
  \begin{align}	\label{def:edge.counts}
    e_A(\Aset,\Bset) &= \sum_{u\in \Aset}\sum_{v\in \Bset} A_{uv}.
  \end{align}
  Note that this can count the same edge twice if $\Aset\cap\Bset\neq\varnothing$.
  
  By the invariance of the law of a uniform random $d$-regular simple graph on $n$ vertices under 
  the swapping of vertex labels,
  the neighbors of $v$ in such a graph form a set of $d$ vertices
  sampled uniformly from $[n]\setminus\{v\}$, where $[n]$ denotes the set of integers $\{1,\ldots,n\}$.
  Thus the probability of a given edge $uv$ appearing in the graph is $d/(n-1)$.

\section{Spectral concentration from measure concentration}	\label{sec:spectral}

The main result of the present work is to extend the bound $O(\sqrt{d})$ on the second eigenvalue of a random $d$-regular graph to the uniform model with $d=O(n^{2/3})$. 
Our argument follows a streamlined version of the Kahn--Szemer\'edi approach, with all of the necessary concentration estimates unified into an assumption that we call the ``uniform tails property" (Definition~\ref{def:utp} below), which gives uniform tail bounds for linear functions of the adjacency matrix.
This property is shown to hold in different models of random regular graphs in
Proposition~\ref{prop:general.utp}.
In Section~\ref{sec:high.level} we state a technical result, Proposition~\ref{prop:ontails}, which gives a bound on $\lambda(A)$ holding with high probability for any random regular multigraph satisfying the uniform tails property.
Based on these results, whose proofs appear later in the paper, we prove Theorem~\ref{thm:main}, the second
eigenvalue bound for the uniform model, as well as Theorem~\ref{thm:perm.gap}, 
for permutation models.

\subsection{The uniform tails property}	\label{sec:utp}

We will prove high probability bounds of the optimal order $O(\sqrt{d})$ for random regular graph models satisfying the following concentration property. 
As is common in the literature on concentration of measure, we phrase our tail bounds in terms of the function\begin{equation}	\label{hdef}
h(x)=(1+x)\log(1+x) - x, \quad \mbox{for $x \ge -1$.}
\end{equation}
An $n\times n$ matrix $Q$ is associated to a linear function $f_Q$ of the entries of a matrix $M$ as follows:
\begin{equation}	\label{fqdef}
f_Q(M)=\sum_{u,v=1}^n Q_{uv}M_{uv}.
\end{equation}
When $M$ is symmetric we lose no generality in restricting to symmetric matrices $Q$.

\begin{defn}[Uniform tails property]   \label{def:utp}
Let $M$ be a random symmetric $n\times n$ matrix with nonnegative entries. 
With $f_Q$ as in \eqref{fqdef}, write
\begin{equation}
\mu := \E f_Q(M) = f_Q(\E M) \quad \mbox{and} \quad \sm^2 := f_{Q\circ Q}(\E M) =  \sum_{u,v=1}^n Q_{uv}^2\E M_{uv}
\end{equation}
where $\circ$ denotes the Hadamard (entrywise) matrix product.
Say that $M$ satisfies the \emph{uniform tails property} $\UTP(c_0,\gamma_0)$ with $c_0>0,\gamma_0\ge0$, if the following holds:
for any $a,t>0$ and for any $n\times n$ symmetric matrix $Q$ with entries $Q_{uv}\in [0,a]$ for all $u,v\in [n]$,
\begin{align}	\label{utpbound}
\P\bigl[\,  f_Q(M) \ge (1+\gamma_0)\mu  +  t\, \bigr] \,,\; \P\bigl[\,  f_Q(M) \le (1-\gamma_0)\mu  - t\, \bigr] \,
&\leq \,\exp\biggl(-c_0\frac{\sm^2}{a^2} h\biggl( \frac{at}{\sm^2}\biggr)\biggr).
\end{align}
We will say that $M$ satisfies the \emph{uniform upper tail property} $\UUTP(c_0,\gamma_0)$ if the above bound holds for the first quantity on the left hand side,
with no assumption on the lower tail.
\end{defn}

\begin{rmk}	\label{rmk:utp}
From the bound $h(x)\ge \frac{x^2}{2(1+x/3)}$ for $x\ge0$, the bound \eqref{utpbound} implies
\begin{equation}	\label{utpbound2}
\pro{ |f_Q(M) -\mu| \ge \gamma_0\mu+ t} \le 2\expo{ -\frac{c_0t^2}{2(\sm^2 + \frac13at)}}.
\end{equation}
However, \eqref{utpbound} is superior for large $t$---a fact we will use to establish a key graph regularity property (see Lemma~\ref{lem:discrepancy}).
\end{rmk}

The uniform tails property is closely related to extensive work in the literature on 
\emph{Hoeffding's combinatorial statistic}, defined as
$f_Q(P)$ with $P$ a uniform random $n\times n$ permutation matrix and $Q$ a fixed $n\times n$ matrix with bounded entries.
See Remark~\ref{rmk:Hcs} for a lengthier discussion.

In addition to proving the uniform tails property for uniform
random regular graphs, we will show it holds for various random regular graphs derived from random
permutations.
In these models, for $d\ge 2$ even, we let $P_1,\dots, P_{d/2}$ be independent random $n\times n$ permutation matrices, and we put $A=\sum_{k=1}^{d/2}( P_k+P_{k}^\tran)$. 
Note that $A$ is the adjacency matrix of a \emph{multigraph}, with loops and parallel edges.

First, we will consider the case when the permutations are uniform over the symmetric group on $n$ elements.
This is frequently called the \emph{permutation model} and is considered in \cite{FKS,Friedman08,DJPP}
and elsewhere. We will call it the \emph{uniform permutation model} here.
Note that we then have $\E A_{uv} = d/n$ for all $u,v\in [n]$, while for the uniform model we have $\E A_{uv} = d/(n-1)$ for $u\ne v$, giving rise to slightly different values of the quantities $\mu$ and $\sm^2$ in Definition~\ref{def:utp}. 

Next, we prove the uniform tails property for graphs derived from 
permutations 
$\pi$ with distribution constant on conjugacy class, that is, that satisfy
\begin{align*}
\sigma^{-1}\pi \sigma =_d \pi \quad \mbox{for all permutations $\sigma$,}
\end{align*}
and are fixed point free, that is, $\pi(u)\not = u$ for all $u$ a.s. 
The permutation matrices $P_1,\ldots,P_{d/2}$ are independently created from random
permutations satisfying this property, but they need not be identically distributed.
As for the uniform model, $\E A_{uv}=d/(n-1)$ for $u \not = v$.
One example of such a graph model is given when the permutations are uniformly distributed over
the set of all fixed point free involutions. The graphs produced from this have edges given by independently
choosing $d/2$ uniformly random matchings of the vertices, and then forming two parallel edges between each
pair of matched vertices. Dividing the resulting adjacency matrix by two, we have the random regular
graph model $\mathcal{I}_{n,d}$ considered in \cite[Theorem~1.3]{Friedman08}.
Another example is when the permutations are uniformly distributed permutations with one long cycle,
the graph model considered in \cite[Theorem~1.2]{Friedman08}.


The following propositions state that the uniform tails property holds with appropriate $c_0,\gamma_0$ for the various random regular multigraph models we consider.
Parts~\ref{part:up} and~\ref{part:nup} are proved in Section~\ref{sec:perm.model.concentration},
and part~\ref{part:unif} is deduced from a stronger result, Theorem~\ref{thm:light.couples.positive}, in Section~\ref{sec:uniform.model.concentration}.

\begin{prop}\label{prop:general.utp} 
  Let $A$ be the adjacency matrix of a random $d$-regular multigraph on $n$ vertices.
  \begin{enumerate}[(a)]
  \item \label{part:up}
    If $A$ is drawn from the uniform permutation model, then it satisfies $\UTP(\frac14,0)$.
  \item \label{part:nup}
    If $A$ is drawn from the permutation model with distribution constant on conjugacy class and fixed point free, then it satisfies $\UTP(\frac{1}{8},0)$.
  \item \label{part:unif}
    If $A$ is drawn from the uniform model, then it satisfies $\UTP(c_0,\gamma_0)$ with
      \begin{align}	\label{c0g0}
        c_0= \frac16\left(1-\frac{d+1}{n-1}\right),\quad\quad \gamma_0 = \frac{d+1}{n-d-2}.
      \end{align}
  \end{enumerate}
\end{prop}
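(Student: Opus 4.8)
The plan is to prove each part by exhibiting an appropriate size biased coupling for $f_Q(A)$ and then invoking the concentration machinery of Section~\ref{sec:size.biasing} (specifically the Bennett--type tail bound for unbounded size biased couplings). Recall that to apply that theory to a nonnegative random variable $X = f_Q(A)$ with mean $\mu$, one needs a coupling $(X, X^s)$ where $X^s$ has the $X$-size biased distribution, together with good control on $X^s - X$: ideally a pointwise bound, and failing that a bound of the form $X^s - X \le c$ on a high-probability event plus integrability. The target exponent in \eqref{utpbound} is $c_0 (\sm^2/a^2)\, h(at/\sm^2)$, so the coupling must be constructed so that the relevant "bounded difference" parameter is $a$ (the entrywise bound on $Q$) and the variance proxy is $\sm^2 = \sum Q_{uv}^2 \E A_{uv}$.

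For parts~\ref{part:up} and~\ref{part:nup} (the permutation models), the coupling is the classical one for Hoeffding's combinatorial statistic. Writing $A = \sum_{k} (P_k + P_k^\tran)$, the statistic $f_Q(A) = \sum_k \sum_u (Q_{u,\pi_k(u)} + Q_{\pi_k(u),u})$ is a sum over the $d/2$ independent permutations; size biasing a sum is done by size biasing one randomly chosen summand (weighted by its mean) and leaving the rest alone, so it suffices to build a size biased coupling for $g(\pi) := \sum_u Q_{u,\pi(u)}$ for a single permutation $\pi$ of the appropriate type. For a uniform $\pi$ one picks a uniformly random index $I$, conditions on the value at $I$ being chosen size-biased, and repairs the permutation by a single transposition — this changes $g$ by at most $4a$ (two entries each in $Q$ and its transpose, each at most $a$), which is a pointwise bound, and summing over the $d/2$ blocks gives the variance proxy $\sm^2$. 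The fixed-point-free conjugation-invariant case is handled the same way, using that such a law is a mixture of uniform laws on conjugacy classes and that the single-transposition repair still stays inside the fixed-point-free regime up to a bounded correction; the slightly worse constant $\tfrac18$ versus $\tfrac14$ accounts for this. The only real work here is bookkeeping: verifying the coupling is genuinely size biased and tracking the constants through the Section~\ref{sec:size.biasing} inequality.

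Part~\ref{part:unif}, the uniform model, is the substantive one, and the statement explicitly says it is deduced from Theorem~\ref{thm:light.couples.positive}; so the plan is simply to cite that theorem and perform the bookkeeping that converts its conclusion into the $\UTP$ statement with the stated $c_0, \gamma_0$. The point is that for the uniform distribution on simple $d$-regular graphs there is no product structure, so the size biased coupling must be built combinatorially via \emph{switchings} (Section~\ref{sec:graph.couplings}): to bias toward adding weight at a target edge $uv$, one performs a switching that inserts edge $uv$ while deleting edges elsewhere to preserve $d$-regularity, and the change in $f_Q$ is controlled by the entrywise bound $a$ on $Q$ up to a small number of affected edges. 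Unlike the permutation case the coupling is only "bounded on a good event" — certain degenerate switching configurations must be excluded — which is exactly why one needs the unbounded size biasing theory and why the parameter $\gamma_0 = (d+1)/(n-d-2)$ appears: it absorbs the discrepancy between the idealized switching count and the true one, and it is a genuine constant-order quantity only when $d = O(n)$, vanishing otherwise. The factor $1 - \tfrac{d+1}{n-1}$ in $c_0$ likewise comes from the probability that a random switching is non-degenerate.

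The main obstacle is entirely in part~\ref{part:unif}, and more precisely in whatever is needed to state and apply Theorem~\ref{thm:light.couples.positive}: one must check that the switching-based coupling has the required structure (the increment $f_Q(A^s) - f_Q(A)$ is bounded by a fixed multiple of $a$ off an exceptional event, and the exceptional event has probability controlled in terms of $d/n$), and that the resulting parameters feeding into the Section~\ref{sec:size.biasing} tail bound produce exactly the exponent in \eqref{utpbound} with the claimed $c_0$. Parts~\ref{part:up} and~\ref{part:nup} are comparatively routine given the standard Hoeffding-statistic coupling. I would therefore organize the write-up as: (i) recall the single-permutation size biased coupling and its bounded increment, deduce \ref{part:up} and \ref{part:nup} by the sum-of-independent-blocks construction and the Section~\ref{sec:size.biasing} inequality; (ii) invoke Theorem~\ref{thm:light.couples.positive} and extract \ref{part:unif}, with the constant chase for $c_0, \gamma_0$ being the one place that requires care.
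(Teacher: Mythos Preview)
Your high-level plan matches the paper's: parts~\ref{part:up} and~\ref{part:nup} via an explicit size biased coupling for a single permutation (then mixing over the $d/2$ independent blocks), and part~\ref{part:unif} by citing Theorem~\ref{thm:light.couples.positive} and doing the constant chase. Two points need correction.

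First, for part~\ref{part:nup} your description of the coupling is wrong in a way that matters. A one-sided repair $\tau\pi$ by a transposition $\tau$ changes the cycle type of $\pi$, so the result does not have the conditional law of $\pi$ given $\pi(U)=V$ when the law of $\pi$ is merely conjugation-invariant; there is no ``up to a bounded correction'' fix for this, since what is at stake is the distributional identity, not a numerical bound. The paper instead uses \emph{conjugation} $\pi^{(UV)}=\tau\pi\tau$ with $\tau=(U,\pi^{-1}(V))$, which preserves the conjugacy class exactly and hence gives the correct conditional distribution. Conjugation moves four edges rather than two, so the increment bound becomes $8a$ instead of $4a$, which is precisely why the constant drops from $\tfrac14$ to $\tfrac18$.

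Second, the variance proxy $\sm^2$ does not enter by ``summing over the $d/2$ blocks''. What feeds into Theorem~\ref{thm:sizebias_concentration_bennett} is a bound $\E\bigl[(X^s-X)^+\mid A\bigr]\le C\sm^2/\mu$, and obtaining this requires a Cauchy--Schwarz step over the permutation (the cross terms $Q_{UV}Q_{U'V'}$ are controlled by $\sum Q_{uv}^2$ after reindexing). This is the substantive computation in parts~\ref{part:up} and~\ref{part:nup}; without it you only get the Hoeffding-type exponent with $\mu$ in place of $\sm^2$. For part~\ref{part:unif}, the one missing ingredient in your sketch is the elementary inequality $p^{-1}h(px)\ge p\,h(x)$ for $p\in[0,1]$ and $x\ge0$, which is what converts the shifted tail bounds of Theorem~\ref{thm:light.couples.positive} (centered at $\mu/p$ and $p'\mu$) into the $\UTP$ form with the stated $c_0,\gamma_0$.
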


\subsection{High level proofs of the spectral gap for the uniform and permutation models}			\label{sec:high.level}

The following proposition shows that $\lambda(A)=O(\sqrt{d})$ with high probability for a wide class of distributions on random $d$-regular multigraphs satisfying the uniform tails property for suitable $c_0>0,\gamma_0\ge0$. 
The setup is sufficiently general to cover all the graph models we consider here;
hence, in combination with Proposition~\ref{prop:general.utp} it yields control of $\lambda(A)$. 
The assumptions also cover any random regular multigraph whose expected adjacency matrix has uniformly bounded entries and is close in the Hilbert--Schmidt norm to a constant matrix. 
Recall that the Hilbert--Schmidt norm of a matrix $B$ is given by $\|B\|_{\HS}=\big(\sum_{u,v} B_{uv}^2\big)^{1/2}$.
We let $\1=(1,\dots,1)^\tran \in \RR^n$ denote the all-ones vector. 

\begin{prop}[Spectral concentration from measure concentration]	\label{prop:ontails}
Let $A$ be the adjacency matrix of a random $d$-regular multigraph on $n$ vertices. 
Assume that the following hold for some constants $c_0>0,a_1\ge1$, $a_2,a_3\ge0$:
\begin{enumerate}
\item $\E A_{uv}\le a_1\frac{d}{n} $ for all $u,v\in [n]$;
\item $\left\| \E A - \frac{d}{n}\1\1^\tran \right\|_{\HS}
\le a_2\sqrt{d};$
\item $A$ has $\UTP(c_0,a_3/\sqrt{d})$.
\end{enumerate}
Then for all $K>0$ and some $\alpha>0$ sufficiently large depending on $K,c_0, a_1,a_2,a_3$,
\begin{equation}	\label{ontails.bound}
\pr\bigl[\lambda(A)\ge \alpha\sqrt{d}\bigr]\le n^{-K}+4e^{-n}.
\end{equation}
\end{prop}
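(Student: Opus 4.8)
The plan is to run the Kahn--Szemer\'edi argument, which reduces the bound on $\lambda(A)$ to controlling the bilinear form $\langle x, (A - \E A) y\rangle$ uniformly over unit vectors $x,y$, together with handling the rank-one contribution of $\E A$. First I would write $A = (A - \E A) + \E A$, and note that by assumption (2) the matrix $\E A - \frac{d}{n}\1\1^\tran$ has Hilbert--Schmidt (hence operator) norm $O(\sqrt d)$, while $\frac{d}{n}\1\1^\tran$ is rank one with the top eigenvector $\1/\sqrt n$ very close to the Perron eigenvector of $A$; its contribution to $\lambda(A)$ (the \emph{second} eigenvalue in absolute value, i.e.\ restricted to $\1^\perp$ up to a small error) is therefore negligible. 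This is standard and the rank-one correction is handled by a rank-one perturbation / interlacing argument, so the real content is bounding $\|(A - \E A)\restriction_{\1^\perp}\|$, equivalently $\sup_{x,y} \langle x, (A-\E A) y\rangle$ over unit vectors $x \perp \1$ (and all unit $y$).

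The core step is the standard discretization: replace the supremum over the sphere by a maximum over a $\frac12$-net $\mathcal{N}$ of $\sphere$ of size $\le 5^n$, so that $\|A - \E A\restriction_{\1^\perp}\| \le 4 \max_{x,y \in \mathcal{N}} \langle x, (A - \E A)y\rangle$ after absorbing the net error. For fixed net vectors $x,y$ one splits the sum $\sum_{u,v}(A_{uv}-\E A_{uv})x_u y_v$ into a ``light couples'' part, where $|x_u y_v|$ is at most roughly $\sqrt{d}/n$ times a constant, and a ``heavy couples'' part. The light part is a linear function $f_Q(A)$ with $Q_{uv} = x_u y_v \1_{\text{light}}$ having entries bounded by $a := O(\sqrt d / n)$, so the $\UTP(c_0, a_3/\sqrt d)$ hypothesis applies: with $\sm^2 = \sum Q_{uv}^2 \E A_{uv} \le a_1 \frac{d}{n}\sum_{u,v} x_u^2 y_v^2 = a_1 d/n$ and the Bennett-type bound \eqref{utpbound}, one gets that $f_Q(A)$ exceeds $(1+a_3/\sqrt d)\mu + t$ with probability at most $\exp(-c_0 \frac{\sm^2}{a^2}h(at/\sm^2))$; choosing $t = \Theta(\sqrt d)$ (with a large constant depending on $K$) makes this $\le 5^{-2n} n^{-K}$ after a union bound over $\mathcal{N}\times\mathcal{N}$, since $h$ grows superlinearly and $\sm^2/a^2 = \Theta(n)$. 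The error term $\gamma_0 \mu \le (a_3/\sqrt d)\cdot O(d) = O(\sqrt d)$ is harmlessly absorbed into $\alpha\sqrt d$. The heavy couples part is controlled deterministically using the $d$-regularity (row sums of $A$ are exactly $d$) and a counting/discrepancy bound on how many heavy pairs can occur, which is where one typically needs $x \perp \1$ and an $\ell^2\to\ell^1$ type estimate; this contributes another $O(\sqrt d)$ with an at most $4e^{-n}$ failure probability coming from a regularity event (the number of edges in/between small sets being not too large, which again follows from \eqref{utpbound} applied with $0/1$-valued $Q$).

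I would expect the main obstacle to be the bookkeeping in the heavy-couples / discrepancy estimate: controlling $\sum_{\text{heavy }(u,v)} A_{uv}|x_u y_v|$ requires a deterministic combinatorial lemma saying that in any $d$-regular graph the edge counts $e_A(S,T)$ between sets are not too much larger than $d|S||T|/n$ unless $|S||T|$ is small, and this must be combined delicately with the dyadic decomposition of $x,y$ by coordinate size. This is exactly the part of the Kahn--Szemer\'edi machinery that is most technical, and in this paper it is presumably isolated as a separate lemma (a ``bounded discrepancy'' statement proved from the $\UTP$ via the superlinear growth of $h$, as flagged in Remark~\ref{rmk:utp}); given such a lemma, assembling the light part, heavy part, net error, and rank-one correction into \eqref{ontails.bound} with the stated failure probability $n^{-K} + 4e^{-n}$ is routine. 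Everything else --- the net argument, the application of $\UTP$ to the light part, and the rank-one perturbation --- is standard once the hypotheses (1)--(3) are in hand.
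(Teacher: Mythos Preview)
Your proposal is essentially correct and follows the same Kahn--Szemer\'edi strategy as the paper: discretize the sphere, split into light and heavy couples, control the light part via $\UTP$ with a union bound over the net, and control the heavy part via a discrepancy property (itself derived from $\UTP$). A few organizational points differ from the paper's execution, and one attribution is reversed.

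First, the paper works with the \emph{quadratic} form $x^\tran A x$ over $x\in\sphere_0$ directly (via the variational formula \eqref{variational}), rather than the bilinear form $\langle x,(A-\E A)y\rangle$ over a net of $\sphere\times\sphere$. This exploits the symmetry of $A$ and avoids the separate rank-one perturbation step you describe: instead of splitting off $\E A$ at the outset, the paper absorbs assumption~(2) into a bound on $|\E f_{L(x)}(A)|$ (Lemma~\ref{lem:exlight}), using $x\perp\1$ there. In particular, the condition $x\perp\1$ is used for the \emph{light} couples (to center the mean), not for the heavy couples as you suggest; Lemma~\ref{lem:heavy} holds for all $x\in\sphere$.

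Second, you have the two error terms swapped. In the paper the discrepancy event $\DP(\delta,\kappa_1,\kappa_2)$ fails with probability at most $n^{-K}$ (Lemma~\ref{lem:discrepancy}), and conditional on discrepancy the heavy part is bounded \emph{deterministically} (Lemma~\ref{lem:heavy}); the $4e^{-n}$ comes from the light-couples union bound over the $9^n$-point net after taking $\alpha$ large. Your version (light gives $n^{-K}$, discrepancy gives $4e^{-n}$) would also be arrangeable, but is not how the stated bound \eqref{ontails.bound} arises here.
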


\begin{rmk}
The above proposition deduces an upper tail bound on the spectral gap from uniform tail bounds on functionals of the form \eqref{fqdef}. In the other direction, since $\lambda(A)$ is the supremum over $x\in\langle \1\rangle^\perp\setminus \{0\}$ of the Rayleigh quotients $x^\tran A x/|x^\tran A x|$, on the event that $\lambda(A) \le \alpha\sqrt{d}$ we have the uniform bound $f_{xx^\tran}(A)\le \alpha\sqrt{d}$ for unit vectors $x\in \sphere\cap \langle \1\rangle^\perp$. (Taking $x$ to be constant on a set of vertices $S$ gives the well-known \emph{expander mixing lemma} \cite[Lemma~2.5]{HLW}.) Thus, \eqref{ontails.bound} implies a uniform polynomial upper tail for the random variables $f_Q(A)$ for the case that $Q$ is a rank-1 projection. Of course, the advantage of \eqref{ontails.bound} over the uniform tails property is that we obtain a single high probability event on which all rank-1 functionals $f_Q(A)$ are bounded.
\end{rmk}

The proof of this proposition is deferred to Section~\ref{sec:kahn.szemeredi}.
Combining Proposition~\ref{prop:ontails} and Proposition~\ref{prop:general.utp}, we  deduce the following
results:
\begin{thm}[Spectral gap for the permutation models] \label{thm:perm.gap}
  Consider either of the following two distributions for $A$:
  \begin{enumerate}[(a)]
    \item \label{case.uniform}
For all $n \ge 5$ and all even $d\ge 2$, let $A=\sum_{k=1}^{d/2}( P_k+P_k^\tran)$ be a random $d$-regular multigraph from the uniform permutation model.
    \item \label{case.nonuniform}
      For all $n\geq 5$ and all even $2\leq d\leq C_0n$ for some constant $C_0$, let 
      $A=\sum_{k=1}^{d/2}( P_k+P_k^\tran)$ be a random $d$-regular multigraph from the 
      permutation model with permutations constant on conjugacy class and fixed point free.
  \end{enumerate}
  In both cases, for any $K>0$, 
  there is a constant $\alpha$ sufficiently large, depending only on $K$ and in the second case also on $C_0$,
  such that
$$\pro{ \lambda(A) \ge \alpha\sqrt{d}} \le n^{-K} + 4e^{-n}.$$
\end{thm}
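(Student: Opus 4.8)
The plan is to derive Theorem~\ref{thm:perm.gap} as an immediate corollary of Proposition~\ref{prop:ontails} and Proposition~\ref{prop:general.utp}, exactly in the spirit of how Theorem~\ref{thm:main} is obtained for the uniform model. Concretely, I would verify the three hypotheses of Proposition~\ref{prop:ontails} for each of the two permutation distributions, and then simply invoke that proposition.

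First, in case~\ref{case.uniform} (the uniform permutation model): here $\E A_{uv}=d/n$ for all $u,v\in[n]$, so hypothesis~(1) of Proposition~\ref{prop:ontails} holds with $a_1=1$, and hypothesis~(2) holds with $a_2=0$ since $\E A=\frac{d}{n}\1\1^\tran$ exactly. Proposition~\ref{prop:general.utp}\ref{part:up} gives $\UTP(\tfrac14,0)$, so hypothesis~(3) holds with $c_0=\tfrac14$ and $a_3=0$. Since the constants $a_1,a_2,a_3,c_0$ are absolute, Proposition~\ref{prop:ontails} yields $\pro{\lambda(A)\ge\alpha\sqrt d}\le n^{-K}+4e^{-n}$ for $\alpha$ depending only on $K$, as claimed.

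Second, in case~\ref{case.nonuniform} (permutations constant on conjugacy class and fixed point free): here $\E A_{uv}=d/(n-1)$ for $u\ne v$ and $\E A_{uu}=0$ (fixed-point-freeness kills the diagonal). Thus $\E A_{uv}\le \frac{2d}{n}$ for $n\ge 2$, giving hypothesis~(1) with $a_1=2$ (or one can be slightly more careful and take $a_1$ close to $1$). For hypothesis~(2), $\E A-\frac dn\1\1^\tran$ has off-diagonal entries $\frac{d}{n-1}-\frac dn = \frac{d}{n(n-1)}$ and diagonal entries $-d/n$, so its Hilbert--Schmidt norm is $O(\sqrt{d})$ provided $d=O(n)$; this is where the hypothesis $d\le C_0 n$ enters, and $a_2$ will depend on $C_0$. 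Proposition~\ref{prop:general.utp}\ref{part:nup} gives $\UTP(\tfrac18,0)$, so hypothesis~(3) holds with $c_0=\tfrac18$, $a_3=0$. Applying Proposition~\ref{prop:ontails} then gives the stated bound with $\alpha$ depending only on $K$ and $C_0$.

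There is essentially no obstacle here beyond bookkeeping: the whole content has been pushed into the two propositions, and the only thing to check is that the expected adjacency matrices of the two permutation models satisfy the mild regularity hypotheses~(1)--(2), which is a one-line computation in each case once one notes that the off-diagonal expected entries are $d/n$ or $d/(n-1)$ and the diagonal is zero. The mildly delicate point is tracking that $a_2$ (in case~\ref{case.nonuniform}) is controlled by $C_0$ and hence the final $\alpha$ depends only on $K$ and $C_0$; this is exactly why the hypothesis $d\le C_0 n$ is imposed in part~\ref{case.nonuniform} but not in part~\ref{case.uniform}, where $\E A$ is exactly a constant matrix off the diagonal. I would present the proof as two short paragraphs, one per case, each checking (1)--(3) and then citing Proposition~\ref{prop:ontails}.
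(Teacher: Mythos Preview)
Your proposal is correct and matches the paper's proof essentially line for line: in each case you verify hypotheses (1)--(3) of Proposition~\ref{prop:ontails} with the same choices of $a_1,a_2,a_3,c_0$ (up to cosmetic differences), invoking Proposition~\ref{prop:general.utp} for the uniform tails property, and then cite Proposition~\ref{prop:ontails}. One tiny slip in your closing summary: in case~\ref{case.uniform} the diagonal of $\E A$ is $d/n$, not zero (uniform permutations can have fixed points), but you had this right in your main argument where you wrote $\E A=\frac{d}{n}\1\1^\tran$ exactly.
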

\begin{proof}
For case~\ref{case.uniform},  for each $u,v\in [n]$ we have $\E A_{uv}=d/n$. 
Together with Proposition~\ref{prop:general.utp}\ref{part:up}, this means we can apply Proposition~\ref{prop:ontails} with $a_1=1$ and $a_2=a_3=0$, and the result follows.

For case~\ref{case.nonuniform},
  we have $\E A_{uv}=d/(n-1)$ for all $u \not = v$ and $\E A_{uu}=0$ for all $u \in [n]$, so may take $a_1=2$, say. Then we can compute
\begin{align*}
\left\| \E A - \frac{d}{n}\1\1^\tran \right\|_{\HS}= \frac{d}{\sqrt{n-1}},
\end{align*}
and set $a_2=\sqrt{2 C_0}$. By Proposition~\ref{prop:general.utp}\ref{part:nup}, we may take $a_3=0$, completing the proof.
\end{proof}
Case~\ref{case.uniform} of Theorem~\ref{thm:perm.gap} was previously shown in \cite{DJPP}.
The proof there also uses the Kahn--Szemer\'edi approach, with
the necessary concentration proven via martingale methods and by direct evaluation of the moment
generating function. Our Stein's method machinery
makes the proof much simpler: contrast our proof of 
Proposition~\ref{prop:general.utp}\ref{part:up}
with those of Theorem~26 and Lemma~30 in \cite{DJPP}.

It is natural to ask about relaxing the condition in Theorem~\ref{thm:perm.gap}\ref{case.nonuniform}
that the permutations be fixed point free. It seems possible to remove this condition entirely from
Proposition~\ref{prop:general.utp}\ref{part:nup}, at the cost of a significantly more complicated
construction of the size bias coupling. 
On the other hand, if the number of fixed points is of larger order than $n/\sqrt{d}$, then
the resulting matrix models have a larger spectral gap than $O(\sqrt{d})$ from
the terms along the diagonal
We expect that it is possible to prove a version of
Theorem~\ref{thm:perm.gap}\ref{case.nonuniform} for permutations having $O(n/\sqrt{d})$ fixed points, 
though we have not pursued it here.

We prove Theorem~\ref{thm:main} along the same lines as Theorem~\ref{thm:perm.gap},
combining Proposition~\ref{prop:ontails} and Proposition~\ref{prop:general.utp}:
\begin{proof}[Proof of Theorem~\ref{thm:main}]
Following the proof of Theorem \ref{thm:perm.gap}, for the first two conditions in Proposition \ref{prop:ontails} we can take $a_1=2$ and $a_2=1$.
By Proposition~\ref{prop:general.utp}\ref{part:unif}, $A$ has $\UTP(c_0,\gamma_0)$ with the parameters \eqref{c0g0}.
    Now let $\Cd ,\K >0$, and assume $1\le d\le \Cd n^{2/3}$. 
 From Remark~\ref{rmk:d.upper.bound} we may also assume $d\le n/2$.
 Applying these bounds on $d$, for all $n$ sufficiently large we have 
\begin{align} \label{eq:used2/3}
\gamma_0= \frac{d+1}{n-d-2} \le \frac{10d}{n} \le \frac{10\Cd ^{3/2}}{\sqrt{d}}.
\end{align}
(The first inequality holds for all $n\ge 7$ and $1\le d\le n/2$.) 
Note that this is where we used the assumption that $d\leq C_0n^{2/3}$. See Remark~\ref{rmk:two.thirds}
for more on why we require $d=O(n^{2/3})$.

Hence we may apply Proposition~\ref{prop:ontails} with $a_3= 2\Cd ^{3/2}$.
We can also shrink $c_0$ to some constant independent of $n$ (say 1/12).
Now having fixed the parameters $c_0,a_1,a_2$ as constants, from Proposition~\ref{prop:ontails} applied
with $K+1$ in the role of $K$, we may take $\al$ sufficiently large depending only on $\Cd ,\K $ such that 
$\lambda(A)\le \al\sqrt{d}$ 
except with probability at most $n^{-\K-1 } + 4e^{-n}$. The result follows from this.
\end{proof}

\begin{rmk}\label{rmk:main.constants}
To get the explicit values of $\al$, we refer to Remark~\ref{rmk:ontails.constants} for the explicit
value of $\alpha$ in Proposition~\ref{prop:ontails}, assuming for now that $d\leq n/2$.
First, we evaluate \eqref{eq:alpha_0}. Note that $\gamma_0\leq 10d/n\leq 10$. Thus
\eqref{eq:alpha_0} gives
\begin{align*}
  \alpha_0 
          &\leq 16+32(2)\bigl(1+e^2(11)^2\bigr) + 128(12)(11)(\K+5)(1+e^{-2}) \leq 153214 + 76484\K
\end{align*}
(Strictly speaking, we are applying Proposition~\ref{prop:ontails} with $a_3=10d^{3/2}/n$ rather than the
larger value $a_3=10\Cd^{3/2}$ here.)
From \eqref{eq:alpha}, we then get
\begin{align*}
  \al \leq 3(\alpha_0+3) + \max(30\Cd^{3/2},768)\leq 459651 + 229452\K + \max(30\Cd^{3/2},768).
\end{align*}
Choosing $n$ large enough requires us to have $n\geq 7$ and $1/n+4n^Ke^{-n}\leq 1$, and these
conditions hold when $n\geq 7+K^2$, for instance. As a consequence of Remark~\ref{rmk:d.upper.bound},
increasing $\alpha$ by one is more than enough to drop the requirement that $d\leq n/2$, leading to the
constants stated in Remark~\ref{rmk:intro.constants}.
As one might suspect after seeing these bounds, we have not made an effort to optimize
these constants.
\end{rmk}

\section{Concentration by size biased couplings}\label{sec:size.biasing}
\subsection{Introduction to size biased couplings}
If $X$ is a nonnegative random variable with finite mean~$\mu>0$, we say that
$X^s$ has the $X$-size biased distribution if 
\begin{align*}
\E[Xf(X)] &= \mu \E[f(X^s)]
\end{align*}
for all functions $f$ such that the left hand side above exists. The law ${\mathcal L}(X^s)$ always exists for such $X$, as can be seen by (equivalently) specifying the distribution $\nu^s$ of $X^s$ as the one with Radon-Nikodym derivative $d\nu^s/d\nu= x/\mu$, where $\nu$ is the distribution of $X$. Many appearances of the size biased distribution in probability and statistics, some quite unexpected, are reviewed in \cite{AGK}.

For such an $X$, we say the pair of random variables $(X,X^s)$ defined on a common space is a size biased coupling for $X$ when $X^s$ has the $X$-size biased distribution.
Couplings of this sort were used throughout the history of Stein's method
(see \cite[p.~89--90]{SteinApprox}, \cite{baldi1989normal}, and \cite{BHJ}), though the connection
to size biasing was not made explicit until \cite{GR}.
See \cite{CGS} or \cite{Ross} for surveys of Stein's method including size biased coupling.

Proving concentration using couplings borrowed from Stein's method began with the work of \cite{ra07}, and, absent the Stein equation tying the analysis to a particular distribution, in \cite{ch07}. 
Using Stein's classical exchangeable pair, \cite{ch07} and \cite{ch10} show concentration for Hoeffding's combinatorial statistic, in the Curie--Weiss and Ising models, and for the number of triangles in the Erd\H{o}s--R\'{e}nyi random graph. 
Similar techniques are also used in \cite{NickDisc} to show concentration for statistics of random regular
digraphs.

We say that a size biased coupling $(X,X^s)$ is bounded when there exists a constant $c$ such that $X^s \le X + c$ almost surely. 
It is shown in \cite{AB} that the existence of such a coupling implies that $X$ is concentrated,
an improvement of a result in \cite{GG}, where the idea originated.
We will present concentration bounds that generalize the results in \cite{AB}, relaxing the
boundedness assumption and giving a Bennett--type inequality (see the following section for the details
of what this means).
Previous work for concentration by unbounded size biased couplings was limited to \cite{gh11}, with a construction particular to the example treated, and dependent on a negative association property holding.
There was no previous Bennett--type inequality by size biasing, though \cite{GI} gives one
by the related method of zero biasing; see Remark~\ref{rmk:Hcs}.

At the heart of nearly all applications of size biasing is a construction of a coupling for a sum $X=\sum_{i=1}^n X_i$, 
as first outlined in \cite[Lemma~2.1]{GR}. 
We follow the treatment in \cite[Section~2.3]{AGK}.
Suppose that $\nu$ is the distribution of a random vector
$(X_1,\ldots,X_n)$ with nonnegative entries each with positive mean. We say that the distribution
$\nu^{(i)}$ defined by its Radon-Nikodym derivative
\begin{align*}
  \frac{d\nu^{(i)}}{d\nu}(x_1,\ldots,x_n)=\frac{x_i}{\E X_i}
\end{align*}
has the distribution of $(X_1,\ldots,X_n)$ size biased by $X_i$. 
One can think of $\nu^{(i)}$ as the distribution
of the random vector formed by size biasing $X_i$ and then giving the vector of other entries 
its distribution conditional on the new value of $X_i$.

\begin{lemma}\label{lem:sizebias.sum}
  Let $X_1,\ldots,X_n$ be nonnegative random variables with positive means, and let
  $X=\sum_{i=1}^n X_i$. 
  For each $i$, let $\bigl(X_1^{(i)},\ldots,X_n^{(i)}\bigr)$ have the distribution of $(X_1,\ldots,X_n)$
  size biased by $X_i$.
  Independent of everything else, choose an index $I$ with $\P[I=i]=\E X_i / \E X$.
  Then $X^s=\sum_{i=1}^n X_i^{(I)}$ has the size biased distribution of $X$.
\end{lemma}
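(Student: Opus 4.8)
The plan is to verify directly that the random variable $X^s = \sum_{i=1}^n X_i^{(I)}$ satisfies the defining relation $\E[X f(X)] = \mu\,\E[f(X^s)]$, where $\mu = \E X = \sum_i \E X_i$ and $f$ is an arbitrary test function for which $\E[X f(X)]$ exists. The key structural fact I would exploit is the interpretation of the distribution $\nu^{(i)}$ recorded just before the lemma: if $(X_1^{(i)},\dots,X_n^{(i)})$ has the distribution of $(X_1,\dots,X_n)$ size biased by $X_i$, then for any function $g$ of the whole vector,
\begin{equation*}
\E\bigl[ X_i\, g(X_1,\dots,X_n) \bigr] = \E X_i \cdot \E\bigl[ g\bigl(X_1^{(i)},\dots,X_n^{(i)}\bigr) \bigr],
\end{equation*}
which is just the Radon--Nikodym derivative $d\nu^{(i)}/d\nu = x_i/\E X_i$ unwound against $g$.

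First I would start from the right-hand side $\mu\,\E[f(X^s)]$ and condition on the random index $I$, which is independent of the families $(X_1^{(i)},\dots,X_n^{(i)})$. Using $\P[I=i] = \E X_i/\E X = \E X_i/\mu$, this gives
\begin{equation*}
\mu\,\E[f(X^s)] = \mu\sum_{i=1}^n \frac{\E X_i}{\mu}\,\E\!\left[ f\!\left( \sum_{j=1}^n X_j^{(i)} \right) \right] = \sum_{i=1}^n \E X_i\,\E\!\left[ f\!\left( \sum_{j=1}^n X_j^{(i)} \right) \right].
\end{equation*}
Next I would apply the displayed size-biasing identity above with $g(x_1,\dots,x_n) = f(x_1+\dots+x_n)$, so that each summand becomes $\E X_i\,\E[g(X_1^{(i)},\dots,X_n^{(i)})] = \E[X_i\, f(X_1+\dots+X_n)] = \E[X_i\, f(X)]$. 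Summing over $i$ then yields $\sum_{i=1}^n \E[X_i f(X)] = \E[X f(X)]$, which is exactly the left-hand side. This establishes that $X^s$ has the $X$-size biased distribution.

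The only genuinely delicate point — and thus the main thing to be careful about rather than a deep obstacle — is the bookkeeping around the independence of $I$ from everything else and the existence/integrability conditions: one should note that $X^s \ge 0$ has finite mean (indeed $\E X^s = \E[X^2]/\mu$ when finite, but even without that the law $\mathcal L(X^s)$ is well-defined by the Radon--Nikodym construction recalled earlier), and that the test functions $f$ can be reduced to indicators or nonnegative functions by the usual decomposition, so that Tonelli/Fubini justifies the interchange of the finite sum over $i$ with the expectations. No concentration machinery from later sections is needed; this is purely a manipulation of the size-biasing definition, and the proof is short.
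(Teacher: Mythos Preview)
Your proof is correct and is precisely the standard verification of this lemma. The paper itself does not supply a proof of this statement; it cites \cite[Lemma~2.1]{GR} and \cite[Section~2.3]{AGK} and then immediately moves on to use the lemma. Your argument---conditioning on $I$, applying the Radon--Nikodym characterization of $\nu^{(i)}$ with $g(x_1,\dots,x_n)=f(x_1+\cdots+x_n)$, and summing---is exactly the proof one finds in those references, so there is nothing to add or correct.
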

This reduces the problem of forming a size biased coupling for $X$ to forming couplings
of $(X_1,\ldots,X_n)$ with $\bigl(X_1^{(i)},\ldots,X_n^{(i)}\bigr)$ for each $i$. We demonstrate now
how to do this when $X_1,\ldots,X_n$ are independent, but it is often possible to do
even when they are not.
\begin{example}[Size biased couplings for independent sums]\label{ex:independent.coupling}
  Suppose $X=\sum_{i=1}^n X_i$ with the summands independent.
  Let $\mu=\E X$ and $\mu_i=\E X_i$.
  Let $X_i^{(i)}$ have the $X_i$-size biased distribution, and make it independent of all other
  random variables. For $i\neq j$, let $X_j^{(i)}=X_j$. By the independence of the random variables,
  $\bigl(X_1^{(i)},\ldots,X_n^{(i)}\bigr)$ has the distribution of $(X_1,\ldots,X_n)$
  size biased by $X_i$. With $I$ and $X^s$ as in Lemma~\ref{lem:sizebias.sum}, we have a size biased
  coupling $(X,X^s)$. Note that $X^s$ can be expressed as
  \begin{align*}
    X^s = X - X_I + X_I^{(I)}.
  \end{align*}
\end{example}
In our applications of size biasing in Section~\ref{sec:concentration}, we will have $X_i=a_iF_i$, where $F_i$ is an indicator and $a_i\geq 0$.
In this case, the $X_i$-size biased transform is $a_i$, and the distribution of $\bigl(X_1^{(i)},\ldots,X_n^{(i)}\bigr)$ 
can be described by specifying that $X_i^{(i)}=a_i$ and $\bigl(X_j^{(i)}\bigr)_{j\neq i}$ is distributed as
$(X_j)_{j\neq i}$ conditional on $F_i=1$.

\subsection{New concentration results by size biased couplings}

Throughout this section, $X$ is a nonnegative random variable with nonzero, finite mean $\mu$.
 We say the size biased coupling $(X,X^s)$ is $(c,p)$-bounded for the upper tail if
\begin{align}
 &\text{for any $x$,}\ \P[X^s \leq X + c\mid X^s\geq x] \geq p\label{eq:uppertail},
\end{align}
and $(c,p)$-bounded for the lower tail if
\begin{align}
 &\text{for any $x$,}\ \P[X^s \leq X + c\mid X\leq x] \geq p\label{eq:lowertail}.
 \end{align}
The probabilities in \eqref{eq:uppertail} and \eqref{eq:lowertail} conditional on null events may be defined arbitrarily. 
In Theorems~\ref{thm:sizebias_concentration} and~\ref{thm:sizebias_concentration_bennett} we recall the definition
\begin{align} \label{def:function.h}
h(x)=(1+x)\log(1+x)-x, \quad x \ge -1,
\end{align}
which satisfies
\begin{align} \label{ineq:h(u)}
h(x) \ge \frac{x^2}{2(1+x/3)} \quad \mbox{for all $x \ge 0$,} \quad \mbox{and} \quad h(x) \ge x^2/2 \quad \mbox{for $-1 \le x\le0$;}
\end{align}
see the second and first inequalities of Exercise 2.8 of \cite{bo13}, respectively. 

 \begin{thm}\ 
 	 
 	 \begin{enumerate}[a)]
 	 	\item  If $X$ admits a $(c,p)$-bounded size biased coupling for the upper tail, then for all $x \ge 0$
 	 	\begin{align}\label{eq:niceuppertailbound}
 	 	\P\biggl[X-\frac{\mu}{p}\geq x\biggr] \leq \exp\biggl(-\frac{\mu}{cp}h\biggl(
 	 	\frac{px}{\mu}\biggr)\biggr) &\leq \exp\Biggl(-\frac{x^2}{2c(x/3+\mu/p)}\Biggr).
 	 	\end{align}
 	 	\item If $X$ admits a $(c,p)$-bounded size biased coupling for the lower tail, then for all $0 \le x < p\mu$,
 	 	\begin{align} \label{eq:nicelowertailbound}
 	 	\P\bigl[X-p\mu \leq -x\bigr] &\leq
 	 	\exp\biggl(-\frac{p\mu}{c}h\biggl(-\frac{x}{p\mu}\biggr)\biggr) \leq \exp\biggl( -\frac{ x^2}{2pc\mu}\biggr). 
 	 	\end{align}
 	 \end{enumerate}
\label{thm:sizebias_concentration}
 \end{thm}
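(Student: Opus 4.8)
The plan is to run the Chernoff--Cram\'er method: use the defining relation of size biasing to turn the bounds into a differential inequality for the cumulant generating function $L(\theta):=\log\E e^{\theta X}$, integrate it, and optimize over $\theta$ exactly as in the proof of Bennett's inequality. Concretely, taking $f(x)=e^{\theta x}$ in $\E[Xf(X)]=\mu\E[f(X^s)]$ gives, wherever $L$ is finite,
\[
L'(\theta)=\frac{\E\bigl[Xe^{\theta X}\bigr]}{\E\bigl[e^{\theta X}\bigr]}=\mu\,\frac{\E\bigl[e^{\theta X^s}\bigr]}{\E\bigl[e^{\theta X}\bigr]},
\]
so everything reduces to bounding the ratio $\E[e^{\theta X^s}]/\E[e^{\theta X}]$ using the $(c,p)$-boundedness.

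The key step is a conditional domination estimate. For the upper tail I would take $\theta>0$: since $X^s\ge 0$ one may write $e^{\theta X^s}=1+\int_1^\infty\1\{X^s>\theta^{-1}\log t\}\,dt$, and integrating (in $t$) the pointwise inequality $\P[X^s>y,\ X^s\le X+c]\ge p\,\P[X^s>y]$ --- which is just the hypothesis $\P[X^s\le X+c\mid X^s\ge x]\ge p$ rewritten --- gives $\E[e^{\theta X^s}\1\{X^s\le X+c\}]\ge p\,\E[e^{\theta X^s}]$. Combining with the pointwise bound $e^{\theta X^s}\1\{X^s\le X+c\}\le e^{\theta c}e^{\theta X}$ yields $\E[e^{\theta X^s}]\le(e^{\theta c}/p)\,\E[e^{\theta X}]$, i.e.\ $L'(\theta)\le(\mu/p)e^{\theta c}$ for $\theta>0$. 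For the lower tail the monotonicity reverses: I would take $\theta<0$, so that $y\mapsto e^{\theta y}$ is decreasing and the relevant level sets are $\{X\le x\}$; the same layer-cake argument applied to $e^{\theta X}$, using $\P[X^s\le X+c\mid X\le x]\ge p$, gives $\E[e^{\theta X}\1\{X^s\le X+c\}]\ge p\,\E[e^{\theta X}]$, and on $\{X^s\le X+c\}$ one now has $e^{\theta X^s}\ge e^{\theta c}e^{\theta X}$, hence $\E[e^{\theta X^s}]\ge p\,e^{\theta c}\E[e^{\theta X}]$ and $L'(\theta)\ge p\mu\,e^{\theta c}$ for $\theta<0$.

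Using $L(0)=0$ and integrating gives $L(\theta)\le\frac{\mu}{pc}(e^{\theta c}-1)$ for $\theta>0$ and $L(\theta)\le\frac{p\mu}{c}(e^{\theta c}-1)$ for $\theta<0$. The Chernoff bound then yields $\P[X\ge\mu/p+x]\le\exp\bigl(-\theta(\mu/p+x)+\frac{\mu}{pc}(e^{\theta c}-1)\bigr)$ and $\P[X\le p\mu-x]\le\exp\bigl(-\theta(p\mu-x)+\frac{p\mu}{c}(e^{\theta c}-1)\bigr)$; the choices $\theta=\frac1c\log(1+\frac{px}{\mu})$ and $\theta=\frac1c\log(1-\frac{x}{p\mu})$ (the latter being negative exactly for $0<x<p\mu$) make the exponents equal to $-\frac{\mu}{cp}h(\frac{px}{\mu})$ and $-\frac{p\mu}{c}h(-\frac{x}{p\mu})$, which are the first inequalities in \eqref{eq:niceuppertailbound} and \eqref{eq:nicelowertailbound}; the second inequalities in each then follow from the two estimates on $h$ in \eqref{ineq:h(u)}.

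The main obstacle will be executing the conditional domination step cleanly: one must pair the right monotonicity of $y\mapsto e^{\theta y}$ with the right conditioning event, and it is exactly the asymmetry between conditioning on $X^s$ (upper tail) and on $X$ (lower tail) that produces the asymmetric centerings $\mu/p$ versus $p\mu$. A secondary, purely technical point is to justify that $L$ is finite and differentiable near $0$ so that the formula for $L'$ is legitimate; I would handle this by first applying the size bias relation with $f(x)=x^k$ together with the domination step to get $\E X^{k+1}\le\frac\mu p\,\E[(X+c)^k]$, which forces at most factorial growth of the moments of $X$, and then noting that the differential inequality above propagates finiteness to all $\theta>0$ (finiteness for $\theta<0$ is automatic since $X\ge 0$).
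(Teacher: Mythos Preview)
Your proposal is correct and follows essentially the same route as the paper: both arguments use the size bias identity to write $M'(\theta)=\mu\,\E e^{\theta X^s}$, run a layer-cake argument with the $(c,p)$-boundedness hypothesis to obtain the differential inequality $(\log M)'(\theta)\le \frac{\mu}{p}e^{\theta c}$ (and the corresponding lower bound for $\theta<0$), integrate, and then optimize in $\theta$ exactly as you do. The one point of genuine difference is the justification of finiteness of the moment generating function: the paper first iterates the tail inequality $\P[X\ge x]\le\frac{\mu}{px}\P[X\ge x-c]$ to obtain geometric decay of the upper tail and hence finiteness of $M(\theta)$, whereas you propose to use the moment recursion $\E X^{k+1}\le\frac{\mu}{p}\E[(X+c)^k]$ to get factorial growth; both work and yield the same end result.
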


The special case $p=1$ yields Theorem~1.3 and Corollary~1.1 from \cite{AB}, with the second inequality in \eqref{eq:niceuppertailbound} a slight improvement to (12) of the latter, through the use of \eqref{ineq:h(u)} in place of \cite[Lemma~4.2]{AB}.     

Theorem~\ref{thm:sizebias_concentration} is an analogue of a bound for sums of independent variables due to Hoeffding \cite[Theorem~1, line (2.2)]{Hoeff63}, with tails that incorporate the mean $\mu$ as well as an $L^\infty$ bound on the summands (from Example \ref{ex:independent.coupling} we see this role is played by $c$ in the above theorem).
Hence, for sums of independent non-negative variables whose expectation is small in comparison to their $L^\infty$ norms, Theorem \ref{thm:sizebias_concentration} provides better estimates than Azuma--Hoeffding-type (or ``bounded differences") inequalities, such as \cite[line (2.3)]{Hoeff63} (the bound that is widely referred to as ``Hoeffding's inequality"). See also Section 4 of \cite{bartroff2014bounded} for a fuller comparison of concentration results obtained by bounded size bias couplings to those via more classical means.

To prove concentration of the light
couples in the Kahn--Szemer\'edi argument (see Section~\ref{sec:kahn.szemeredi}), we will need tail bounds incorporating the \emph{variance} rather than the mean.
For sums of independent variables such bounds are provided\footnote{As explained in \cite{Bennett}, Bernstein's work \cite{Bernstein} was originally published in Russian and went largely unnoticed in the English-speaking world. Bennett himself was unable to access Bernstein's original paper, but proved a strengthened form of Bernstein's result in \cite{Bennett}.} by Bernstein's inequality \cite{Bernstein} or Bennett's inequality \cite[equation~(8b)]{Bennett}.
In previous applications of the Kahn--Szemer\'edi argument,
\cite{FKS} and \cite{BFSU} used ad hoc arguments working
directly with the moment generating function, and \cite{LSV} and \cite{DJPP} used Freedman's
inequality, the martingale version of Bennett's inequality. We instead develop the following
Bennett--type inequality by size biased coupling. Let $x^+$ denote $\max(0,x)$ in the following
theorem.
\begin{thm} \label{thm:sizebias_concentration_bennett}
	Let $(X,X^s)$ be a size biased coupling with $\E X=\mu$, and let $\Bounded$ be an event on which
	$X^s-X\leq c$.
	Let $D=(X^s-X)^+$, and suppose that 
	$ \E\bigl[ D\1_\Bounded\mid X \bigr] \leq \tau^2/\mu$ a.s.
	\begin{enumerate}[a)]
		\item  If $\P[\Bounded \mid X^s]\geq p$ a.s., then for $x \ge 0$
		\begin{align}\label{eq:upper.tail.gen}
		\P\biggl[X-\frac{\mu}{p}\geq x\biggr] \leq \exp\biggl(-\frac{\tau^2}{pc^2}h\biggl(
		\frac{pcx}{\tau^2}\biggr)\biggr) &\leq \exp\Biggl(-\frac{x^2}{2c(x/3+\tau^2/cp)}\Biggr).
		\end{align}
		\item If $\P[\Bounded \mid X]\geq p$ a.s., then for $0 \le x \le p \mu$
		\begin{align} \label{eq:lower.tail.gen}
		\P\bigl[X-p\mu \leq -x\bigr] &\leq
		\exp\biggl(-\frac{\tau^2}{c^2}h\biggl(\frac{cx}{\tau^2}\biggr)\biggr)  \leq \exp\Biggl(-\frac{x^2}{2c(x/3+\tau^2/c)}\Biggr). 
		\end{align}
	\end{enumerate}
\end{thm}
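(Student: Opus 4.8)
The plan is to run the standard size-biasing Laplace-transform argument, following the proof of Theorem~\ref{thm:sizebias_concentration}, but tracking the variance-type control $\E[D\1_\Bounded\mid X]\le \tau^2/\mu$ in place of the crude bound $D\1_\Bounded\le c$ everywhere it is not strictly needed. Recall the basic size-bias identity: for any differentiable $g$ with $\E\abs{Xg(X)}<\infty$,
\begin{align*}
\E[X g(X)] = \mu\,\E[g(X^s)].
\end{align*}
Taking $g(x)=e^{\theta x}$ for $\theta>0$ and writing $m(\theta)=\E e^{\theta X}$, this gives $m'(\theta)=\mu\,\E[e^{\theta X^s}]$. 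The point is to bound $\E[e^{\theta X^s}]$ in terms of $m(\theta)$ using the coupling. Split $X^s=X+(X^s-X)$; on $\Bounded$ we have $X^s-X\le D\le c$, so on $\Bounded$, $e^{\theta(X^s-X)}-1\le \frac{e^{\theta c}-1}{c}D$ by convexity of $t\mapsto e^{\theta t}$ on $[0,c]$ (the chord bound). Off $\Bounded$ we simply discard the gain, using $e^{\theta(X^s-X)}\le e^{\theta(X^s-X)}$ and handling it via the conditioning hypothesis $\P[\Bounded\mid X^s]\ge p$. Concretely, for part~(a) write
\begin{align*}
\E[e^{\theta X^s}] = \E\bigl[e^{\theta X^s}\1_\Bounded\bigr] + \E\bigl[e^{\theta X^s}\1_{\Bounded^c}\bigr],
\end{align*}
and on the first term use $e^{\theta X^s}\1_\Bounded \le e^{\theta X}\bigl(1+\frac{e^{\theta c}-1}{c}D\bigr)\1_\Bounded$, then condition on $X$ and invoke $\E[D\1_\Bounded\mid X]\le \tau^2/\mu$ to get $\E[e^{\theta X^s}\1_\Bounded]\le \bigl(1+\frac{(e^{\theta c}-1)\tau^2}{c\mu}\bigr)m(\theta)$. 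For the off-$\Bounded$ term, one uses $\P[\Bounded\mid X^s]\ge p$, i.e. $\E[\1_{\Bounded^c}\mid X^s]\le 1-p$, so that $\E[e^{\theta X^s}\1_{\Bounded^c}]\le (1-p)\E[e^{\theta X^s}]$; rearranging, $p\,\E[e^{\theta X^s}]\le \bigl(1+\frac{(e^{\theta c}-1)\tau^2}{c\mu}\bigr)m(\theta)$.

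Feeding this into $m'(\theta)=\mu\,\E[e^{\theta X^s}]$ yields the differential inequality
\begin{align*}
\frac{m'(\theta)}{m(\theta)} \le \frac{\mu}{p}\Bigl(1+\frac{(e^{\theta c}-1)\tau^2}{c\mu}\Bigr) = \frac{\mu}{p} + \frac{\tau^2(e^{\theta c}-1)}{pc}.
\end{align*}
Integrating from $0$ to $\theta$ and using $m(0)=1$ gives $\log m(\theta)\le \frac{\mu\theta}{p} + \frac{\tau^2}{pc^2}\bigl(e^{\theta c}-1-\theta c\bigr)$. Then Markov's inequality $\P[X-\mu/p\ge x]\le e^{-\theta(x+\mu/p)}m(\theta)$ gives, after cancelling the $\mu\theta/p$ terms,
\begin{align*}
\P\Bigl[X-\tfrac{\mu}{p}\ge x\Bigr] \le \exp\Bigl(-\theta x + \tfrac{\tau^2}{pc^2}(e^{\theta c}-1-\theta c)\Bigr).
\end{align*}
Optimizing over $\theta>0$ — the choice $\theta c = \log(1+pcx/\tau^2)$ — produces exactly the stated bound $\exp\bigl(-\frac{\tau^2}{pc^2}h(\frac{pcx}{\tau^2})\bigr)$, and the second inequality in \eqref{eq:upper.tail.gen} follows from $h(u)\ge \frac{u^2}{2(1+u/3)}$ in \eqref{ineq:h(u)}. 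Part~(b) is analogous with $\theta<0$: one uses $g(x)=e^{\theta x}$ again, notes that for $\theta<0$ the chord bound on $[0,c]$ gives $e^{\theta(X^s-X)}\le 1$ on $\Bounded$ and more precisely $e^{\theta(X^s-X)}-1\le \theta(X^s-X)\le 0$ is the wrong direction, so instead one keeps $e^{\theta(X^s-X)}\le 1+\frac{1-e^{\theta c}}{c}\cdot(-D)$? — more carefully, for $\theta<0$ and $0\le D\le c$ one has $e^{\theta D}\le 1 + \frac{e^{\theta c}-1}{c}D$ still holds by convexity, but now $\frac{e^{\theta c}-1}{c}<0$, which actually helps. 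Conditioning this time is on $X$ rather than $X^s$, matching the hypothesis $\P[\Bounded\mid X]\ge p$, which is the natural side for the lower tail; this replaces the factor $1/p$ multiplying $\mu$ in the exponent by a factor placed so that the final bound reads $\exp\bigl(-\frac{\tau^2}{c^2}h(\frac{cx}{\tau^2})\bigr)$, valid for $0\le x\le p\mu$.

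The main obstacle I anticipate is the bookkeeping around the event $\Bounded$ and the two different conditionings: getting the $p$'s to land in exactly the right places (on $\mu$ versus on the whole exponent) requires care, since the conditioning hypothesis is on $X^s$ in part~(a) but on $X$ in part~(b), and correspondingly the variance hypothesis $\E[D\1_\Bounded\mid X]\le\tau^2/\mu$ is stated conditionally on $X$. A secondary technical point is justifying finiteness of the relevant moment generating functions (so that differentiating $m$ and applying the size-bias identity with $g(x)=e^{\theta x}$ is legitimate); this can be handled by a truncation argument, replacing $X$ by $X\wedge N$, deriving the bound uniformly in $N$, and letting $N\to\infty$, or by first assuming $X$ bounded and removing that assumption at the end. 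Neither difficulty is deep, but both need to be executed cleanly; the core estimate is exactly the chord bound plus Grönwall-type integration that underlies all Bennett-type inequalities.
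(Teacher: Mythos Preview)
Your upper-tail argument (part~a) is correct and essentially identical to the paper's: splitting off $\Bounded^c$ and rearranging via $\E[e^{\theta X^s}\1_{\Bounded^c}]\le(1-p)\E[e^{\theta X^s}]$ is equivalent to the paper's one-line move $\E[e^{\beta X^s}\1_\Bounded]\ge p\,\E[e^{\beta X^s}]$, and your chord bound $e^{\theta D}\le 1+\frac{e^{\theta c}-1}{c}D$ on $[0,c]$ is the general-$c$ version of the paper's Lemma~\ref{lem:exp.ineq}\,\eqref{eq:e^xy} (the paper normalizes to $c=1$ and rescales at the end). Your remark on MGF finiteness is apt; the paper handles it via Proposition~\ref{prop:finite_mgf}, noting that $\Bounded\subset\{X^s\le X+c\}$ together with $\P[\Bounded\mid X^s]\ge p$ gives a $(c,p)$-bounded coupling for the upper tail.

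Part~(b), however, has a genuine gap. For $\theta<0$ you need a \emph{lower} bound on $\E e^{\theta X^s}$ in order to upper-bound $m(\theta)$. After dropping to $\E[e^{\theta X^s}\1_\Bounded]$ and using $e^{\theta X^s}\ge e^{\theta X}e^{\theta D}$ on $\Bounded$ (here $\theta<0$, $X^s\le X+D$), you need a lower bound on $e^{\theta D}$ for $D\in[0,c]$. The chord bound you cite, $e^{\theta D}\le 1+\frac{e^{\theta c}-1}{c}D$, goes the wrong way (and your parenthetical ``which actually helps'' is a slip). What the paper uses instead is the companion inequality \eqref{eq:e^-xy} of Lemma~\ref{lem:exp.ineq}: $e^{-\beta D}\ge 1-(e^\beta-1)D$ for $\beta>0$, $0\le D\le 1$ (after rescaling to $c=1$), which comes from convexity of $u\mapsto u^{-D}$ rather than of the exponential. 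With that in hand one conditions on $X$ to get
\[
\E\bigl[e^{-\beta D}\1_\Bounded\,\big|\,X\bigr]\;\ge\;\P[\Bounded\mid X]-(e^\beta-1)\E[D\1_\Bounded\mid X]\;\ge\;p-\frac{\tau^2}{\mu}(e^\beta-1),
\]
so the hypothesis $\P[\Bounded\mid X]\ge p$ enters additively here rather than multiplicatively as in part~(a); this is precisely why the $p$ lands on $\mu$ (as $p\mu$) but \emph{not} on $\tau^2$ in the lower-tail exponent. Your split-and-rearrange trick from part~(a) does not transfer, since $\E[e^{\theta X^s}\1_{\Bounded^c}]$ cannot be usefully bounded from below, and the conditioning is on $X$, not $X^s$.
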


We use the notation $\tau^2$ to suggest that $\tau^2$ plays the role of the variance in Bennett's
inequality. In our applications of Theorem~\ref{thm:sizebias_concentration_bennett} in this paper,
$\tau^2$ is indeed on the same order as $\Var X$; see Example~\ref{ex:bennett} for a simple example.

We compare Theorems~\ref{thm:sizebias_concentration} and~\ref{thm:sizebias_concentration_bennett}, assuming $c=1$ by rescaling if necessary. Note that by taking $\Bounded = \{X^s \le X+1\}$ in the former, we have $\E[D \1_\Bounded\mid X] \le 1$, and hence one may set $\tau^2=\mu$. Doing so, the upper bound \eqref{eq:upper.tail.gen} of Theorem~\ref{thm:sizebias_concentration_bennett}
recovers \eqref{eq:niceuppertailbound} of Theorem~\ref{thm:sizebias_concentration} when $\P[X^s-X \le 1 \mid X^s]\geq p$. For the lower tail one can easily verify that 
\begin{align*}
\exp\bigl(-\mu h(x/\mu)\bigr) \le  \exp\bigl(-p\mu h(-x/p\mu)\bigr) \quad \mbox{for all $0 \le x < p \mu$},
\end{align*}
showing the left tail bound of Theorem \ref{thm:sizebias_concentration} superior to that of Theorem~\ref{thm:sizebias_concentration_bennett} in the absence of a better bound on $\E[D\1_\Bounded\mid X]$.

\subsection{Examples}
We now give some examples to give a sense of what can be done with 
Theorems~\ref{thm:sizebias_concentration} and~\ref{thm:sizebias_concentration_bennett}.
Example~\ref{ex:bennett} applies Theorem~\ref{thm:sizebias_concentration_bennett} to recover
a weakened form of Bennett's inequality for independent summands.
Our theorems prove concentration around a shifted mean; Examples~\ref{ex:upper.shift} 
and~\ref{ex:lower.shift} demonstrate that this is unavoidable.
In Example~\ref{ex:erdos.renyi}, we give a simple application of Theorem~\ref{thm:sizebias_concentration}
to Erd\H{o}s--R\'enyi graphs to show that our theory has applications beyond the ones we give
in Section~\ref{sec:concentration}.

\begin{example}[Weakened form of Bennett's inequality]\label{ex:bennett}
  Suppose $X=\sum_{i=1}^n X_i$ with the summands independent and contained in $[0,1]$.
  Let $\mu=\E X$ and $\mu_i=\E X_i$.
  Let $X_i^s$ have the size biased distribution of $X_i$, and make it independent of all other
  random variables.
  Choose $I\in[n]$ independently of all else, taking $\P[I=i]=\mu_i/\mu$.
  As in Example~\ref{ex:independent.coupling}, the pair $(X,X^s)$ is a size biased coupling
  with $X^s = X-X_I+X_I^s$.
  
  Since $X_i^s$ has the same support as $X_i$, we have $X^s\leq X + 1$. In applying 
  Theorem~\ref{thm:sizebias_concentration_bennett}, we can then take the event $\Bounded$ to be the
  entire probability space, and obtain
 \begin{align*}
    \E\bigl[(X^s-X)^+\mid X\bigr] &= \E\bigl[ (X_I^s-X_I)^+\mid X\bigr]\\
     &\leq \E\bigl[X_I^s\mid X\bigr]\\
     &= \E\bigl[X_I^s \bigr] = \frac{1}{\mu}\sum_{i=1}^n \mu_i \E X_i^s.
  \end{align*}
  From the definition of the size biased transform, $\E X_i^s = \E X_i^2 / \mu_i$. Thus
  \begin{align*}
    \E\bigl[(X^s-X)^+\mid X\bigr] &\leq \frac{1}{\mu}\sum_{i=1}^n \E X_i^2.
  \end{align*}
  We then apply Theorem~\ref{thm:sizebias_concentration_bennett} with $c=1$, $p=1$, and
  $\tau^2 = \sum_{i=1}^n \E X_i^2$ to show that
  \begin{align*}
    \P[X-\mu\geq t],\,\P[X-\mu\leq -t] \leq \exp\biggl(-\tau^2h\biggl(\frac{t}{\tau^2}\biggr)\biggr),
  \end{align*}
  which would be Bennett's inequality if $\tau^2$ were $\Var X$ rather than the larger
  $\sum_{i=1}^n \E X_i^2$ (see \cite[Section~2.7]{bo13}).
\end{example}

 When applied with $p<1$, Theorems~\ref{thm:sizebias_concentration} and~\ref{thm:sizebias_concentration_bennett}
 show concentration of $X$ not around its mean $\mu$, but rather
 around $\mu/p$ for the upper tail and $p\mu$ for the lower tail. The following two examples
 demonstrate that this behavior may reflect the true nature of $X$, thus showing 
 these theorems to be unimprovable in this sense.
 \begin{example}[Upper tail concentration around $\mu/p$]\label{ex:upper.shift}
 	Let $Z\sim\Poi(\lambda)$ and $B\sim\Ber(1/2)$ be independent, and define $X=BZ$.
 	Let $X^s=Z+1$. By a well known property of the Poisson distribution (e.g.\ see (6) of \cite{AGK}), 
  $X^s$ has the $Z$-size biased distribution. Mixing a distribution with the measure $\delta_0$ does not change its
 	size biased transform (see Lemma 2.6 of \cite{AGK}). Thus $X^s$ also has the size biased distribution of $X$, and the coupling 
 	$(X,X^s)$ is $(1,1/2)$-bounded for the upper tail.
 	Theorem~\ref{thm:sizebias_concentration} then shows exponential decay for the upper tail of $X$
 	starting at $\mu/p = 2 \mu=\lambda$, reflecting its actual behavior.
 \end{example}
 \begin{example}[Lower tail concentration around $p\mu$]\label{ex:lower.shift}
 	Let $N>1$ and let $X_1,\ldots,X_n$ be i.i.d.\ with distribution
 	\begin{align*}
 	X_i &= 
 	\begin{cases}
 	0 &\text{with probability $1/2-\epsilon$,}\\
 	1 &\text{with probability $1/2$,}\\
 	N &\text{with probability $\epsilon$,}
 	\end{cases}
 	\end{align*}
 	where $\epsilon=1/(2N)$. As $\E X_i=1$, for $i=1,\ldots,n$ the variables
 	\begin{align*}
 	X_i^s &=
 	\begin{cases}
 	1 &\text{with probability $1/2$,}\\
 	N &\text{with probability $1/2$.}
 	\end{cases}
 	\end{align*}
 	have the $X_i$--size biased distribution. Let $X_1^s,\ldots,X_n^s$ be independent of each other and of $X_1,\ldots,X_n$ and set
 $X=X_1+\cdots+X_n$.
 	Then by Lemma~\ref{lem:sizebias.sum}, choosing $I$ uniformly from $\{1,\ldots,n\}$, independent of all other variables, we obtain a size biased coupling $(X,X^s)$
 	by defining
 	\begin{align*}
 	X^s &= X - X_I+X^s_I.
 	\end{align*}
 	This coupling is $(1,1/2)$-bounded for the lower tail.
 	Theorem~\ref{thm:sizebias_concentration} shows concentration starting at
 	$p\E X=n/2$. When $N$ is large, $X$ is nearly distributed as $\Bin(n,1/2)$, so this is the correct behavior.
 \end{example}

The next example gives a lower tail bound for the number of isolated vertices in an 
 Erd\H{o}s--R\'enyi graph. The bound is inferior to the one given in \cite{gh11}, but we can get it with
 very little work.
 \begin{example}\label{ex:erdos.renyi}
   Let $G$ be a random graph on $n$ vertices with each edge included independently with
   probability~$p$. Let $X$ be the number of isolated vertices in $G$. To form a size biased coupling,
   select a random vertex $V$ from $\{1,\ldots,n\}$ independently of $G$, and form $G^s$ by deleting
   from $G$ all edges incident to $V$. Let $X^s$ be the number of isolated vertices in $G^s$.
   As $G^s$ is distributed as $G$ conditional on vertex~$V$ being isolated, $X^s$ has the $X$-size biased
   distribution by Lemma~\ref{lem:sizebias.sum}.
   
   Call a vertex a leaf if it has degree one. In any (deterministic) graph, we claim that
   at most $1/3$ of the vertices are connected to two or more leaves. To see this, let 
   $l$ be the number of leaves and $m$ the number of vertices connected to two or more leaves.
   The claim then follows from the observation that $l\geq 2m$.
   
   Thus, conditional on $G$, there is at most a $1/3$ chance that $V$ is connected to two or more leaves.
   Deleting the edges incident to $V$ isolates $V$ as well as any neighboring leaves, giving us
   \begin{align*}
     \P[X^s-X\leq 2\mid G]\geq 2/3.
   \end{align*}
   Since $X$ is measurable with respect to $G$, the coupling is $(2,2/3)$-bounded for the lower tail, and Theorem~\ref{thm:sizebias_concentration} gives the bound
   \begin{align} \label{eq:er.bound}
     \P\Bigl[X - \frac{2\mu}{3}\leq -t\Bigr] &\leq 
        \exp\biggl(-\frac{\mu}{3}h\biggl(-\frac{3t}{2\mu}\biggr)\biggr)
        \leq \exp\biggl(-\frac{3t^2}{8\mu}\biggr)
   \end{align}
   with $\mu=\E X$.      
   
   A variation on this argument shows that the coupling is $(k,k/(k+1))$-bounded for the lower tail.
   Applying this with larger values of $k$ yields a concentration bound around a quantity
   closer to the true mean than in \eqref{eq:er.bound}, but with a worse constant in the exponent.   
 \end{example} 
\subsection{Proofs}

We start with a modified version of \cite[Lemma~2.1]{AB}.
 \begin{lemma}\label{lem:sbbound}
 	If $X$ admits a $(c,p)$-bounded size biased coupling for the upper tail, then
 	\begin{align}
 	\forall x >0, \quad \P[X \geq x] &\leq \frac{\mu}{px}\P[X\geq x-c].\label{eq:upperiter}
 	\end{align}
 	and if $X$ admits a $(c,p)$-bounded size biased coupling for the lower tail, then
 \begin{align}
\forall x, \quad \P[X \leq x] &\leq \frac{x+c}{p\mu}\P[X\leq x+c].\label{eq:loweriter}
 \end{align}
 \end{lemma}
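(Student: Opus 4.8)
The plan is to exploit the defining relation of the size biased distribution together with the bounded-coupling hypotheses, following the strategy of \cite[Lemma~2.1]{AB} but carrying along the conditional probability $p$. Recall that $\E[X f(X)] = \mu\, \E[f(X^s)]$ for all suitable $f$. First I would prove \eqref{eq:upperiter}. Fix $x>0$ and apply the size bias identity with $f = \1_{[x,\infty)}$, giving
\begin{align*}
\E\bigl[X\1_{\{X\ge x\}}\bigr] = \mu\, \P[X^s \ge x].
\end{align*}
On the left, bound $X\1_{\{X\ge x\}} \ge x\1_{\{X\ge x\}}$ to get $x\,\P[X\ge x] \le \mu\,\P[X^s\ge x]$. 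Now I would use the upper-tail boundedness hypothesis \eqref{eq:uppertail}: conditioning on the event $\{X^s\ge x\}$,
\begin{align*}
p\,\P[X^s\ge x] \le \P\bigl[X^s \le X + c,\; X^s\ge x\bigr] \le \P[X \ge x - c],
\end{align*}
the last step because $X^s\ge x$ and $X^s\le X+c$ together force $X\ge x-c$. Combining the two displays yields $x\,\P[X\ge x] \le \frac{\mu}{p}\,\P[X\ge x-c]$, which is \eqref{eq:upperiter}.

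For \eqref{eq:loweriter} I would argue dually. Fix $x$ (any real), and this time apply the size bias identity with $f = \1_{(-\infty, x+c]}$ — but it is cleaner to go through the complementary event. Start from the identity $\mu\,\P[X^s \le x+c] = \E\bigl[X\1_{\{X\le x+c\}}\bigr] \le (x+c)\,\P[X\le x+c]$, using $X\le x+c$ on the event in question (and $X\ge 0$ to handle the case $x+c<0$, where both sides vanish). Next, by the lower-tail boundedness hypothesis \eqref{eq:lowertail}, conditioning on $\{X\le x\}$,
\begin{align*}
p\,\P[X\le x] \le \P\bigl[X^s \le X + c,\; X \le x\bigr] \le \P[X^s \le x+c],
\end{align*}
since $X\le x$ and $X^s\le X+c$ give $X^s\le x+c$. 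Chaining these gives $p\mu\,\P[X\le x] \le (x+c)\,\P[X\le x+c]$, i.e.\ \eqref{eq:loweriter}.

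The only subtlety — the step I'd flag as requiring care rather than being genuinely hard — is making sure the conditional-probability hypotheses \eqref{eq:uppertail} and \eqref{eq:lowertail} are applied on the correct conditioning event and that the resulting unconditional inequalities are valid even when the conditioning event has probability zero (where the conditional probabilities are defined arbitrarily): in that degenerate case $\P[X^s\ge x]=0$ or $\P[X\le x]=0$ and the target inequality holds trivially, so no genuine issue arises. One should also note the edge case in \eqref{eq:loweriter} where $x+c$ could be negative, handled by nonnegativity of $X$ as indicated above. Everything else is a direct substitution into the size bias identity.
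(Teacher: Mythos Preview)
Your proposal is correct and follows essentially the same approach as the paper's proof: the size bias identity applied to an indicator, followed by the boundedness hypothesis to pass from $X^s$ back to $X$, with the null-conditioning case handled separately. Your lower-tail argument is in fact more explicit than the paper's, which simply refers to a ``similar modification'' of \cite[Lemma~2.1]{AB}, and your attention to the degenerate cases ($\P[X^s\ge x]=0$, $\P[X\le x]=0$, and $x+c<0$) is appropriate.
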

 \begin{proof}
 	For $(X,X^s)$ the upper tail coupling, 
 	\begin{align*}
 	px\P[X\geq x] = px\E\1_{\{X\geq x\}}
 	&\leq p\E\bigl[X\1_{\{X\geq x\}} \bigr]
 	= p\mu\P[X^s\geq x].
  \end{align*}
  If $\P[X^s\geq x]=0$, then $\P[X\geq x]=0$, since the support of $X$ contains 
  the support of $X^s$. Thus in this case \eqref{eq:upperiter} holds trivially.
  If $\P[X^s\geq x]>0$, then we apply \eqref{eq:uppertail} to get
  \begin{align*}
 	px\P[X\geq x]&\leq \mu\P[X^s \le X+ c \mid X^s\geq x]\,\P[X^s \geq x]\\
 	&= \mu\P[X^s \le X+ c \text{ and } X^s \geq x]\\
 	&\leq \mu\P[X\geq x-c].
 	\end{align*}
The proof for the lower tail follows by a similar modification of \cite[Lemma 2.1]{AB}. 
 \end{proof}

Inequality~\eqref{eq:upperiter} corresponds to (14) of \cite[Lemma 2.1]{AB} with $\mu$ replaced by $\mu/p$, and inequality~\eqref{eq:loweriter} corresponds to (15) of \cite[Lemma 2.1]{AB} with $\mu$ replaced by $p\mu$. As iteration of the bounds (14) and (15) results in \cite[Theorems 1.1 and 1.2]{AB} respectively, Lemma~\ref{lem:sbbound} implies that the bounds of these theorems hold more generally with this replacement. In particular replacing the functions $u(x,\mu,c)$ and $l(x,\mu,c)$ by $u(x,\mu/p,c)$ and $l(x,p\mu,c)$ respectively, inequalities (3) and (4) of \cite[Theorem 1.1]{AB} hold over the ranges $x \ge \mu/p$ and $0\le x \le p\mu$, with $k$ as given in (1) with the mean $\mu$ replaced by $\mu/p$ and $\mu p$, under the upper and lower tail conditions \eqref{eq:uppertail} and \eqref{eq:lowertail},  respectively. Likewise, under the upper and lower tail conditions \eqref{eq:uppertail} and \eqref{eq:lowertail}, \cite[Theorem 1.2]{AB} holds with all occurrences of the mean $\mu$ replaced by $\mu/p$ and $\mu p$ in (7) and (8), with equalities holding if and only if $x-\mu/p$ and $x-\mu p$ are integers, 
respectively.

Theorem \ref{thm:sizebias_concentration} generalizes \cite[Theorem 1.3 and Corollary 1.1]{AB} by these same replacements. As those results are not shown there as a direct consequence of (14) and (15), we provide separate arguments, beginning by applying Lemma~\ref{lem:sbbound} to prove that \eqref{eq:uppertail}
 implies that the moment generating function $M(\beta)=\E e^{\beta X}$ of $X$ is finite. The following proof is essentially the same as that of \cite[Corollary 2.1]{AB}, with $\mu$ replaced by $\mu/p$ in the upper tail inequality, and using a bound on the upper tail directly rather than bounding that tail using the upper bound product function $u(x,a,c)$.
 \begin{prop}\label{prop:finite_mgf}
 	If $X$ admits a $(c,p)$-bounded size bias coupling for the upper tail 
 	for some $p>0$, then the moment generating function $M(\beta)$ is finite for all $\beta$.
 \end{prop}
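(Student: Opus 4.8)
The plan is to iterate the bound \eqref{eq:upperiter} of Lemma~\ref{lem:sbbound} to control the tail $\P[X \ge x]$ and then sum a geometric-type series to see that $M(\beta) = \E e^{\beta X}$ converges. First I would fix $\beta > 0$ (the case $\beta \le 0$ is immediate since $X \ge 0$, so $e^{\beta X} \le 1$). The key observation is that \eqref{eq:upperiter} says $\P[X \ge x] \le \frac{\mu}{px}\P[X \ge x - c]$ for all $x > 0$; once $x$ is large enough that $\frac{\mu}{px} \le \frac{1}{2}$, say for $x \ge x_0 := 2\mu/p$, each application of the inequality gains a factor of at most $\tfrac12$. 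Iterating from a point $x$ down in steps of size $c$ until we drop below $x_0$, we get a bound of the form $\P[X \ge x] \le 2^{-\lfloor (x - x_0)/c \rfloor}$ (using $\P[\,\cdot\,] \le 1$ to terminate), i.e.\ the tail decays at least geometrically in $x$ with ratio $2^{-1/c}$ per unit length.

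Given such a bound, I would finish by writing $M(\beta) = \E e^{\beta X}$ as an integral of the tail: $\E e^{\beta X} = 1 + \beta\int_0^\infty e^{\beta x}\P[X \ge x]\,dx$ (or, if one prefers to avoid the continuous formulation, bound $\E e^{\beta X} \le \sum_{k \ge 0} e^{\beta(k+1)c}\P[X \ge kc]$ by splitting the range of $X$ into intervals of length $c$). Substituting the geometric tail bound $\P[X \ge x] \lesssim C\,2^{-x/c}$ valid for $x \ge x_0$, and bounding the contribution from $[0,x_0]$ trivially by $e^{\beta x_0}$, the remaining sum/integral is $\sum_k e^{\beta k c} 2^{-k}$ or $\int e^{\beta x} 2^{-x/c}\,dx$, which converges precisely when $e^{\beta c} < 2$, i.e.\ $\beta < (\log 2)/c$. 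This only gives finiteness for $\beta$ in a neighborhood of $0$, not all $\beta$.

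To upgrade to \emph{all} $\beta$, the point is that the geometric ratio in the tail bound can be made arbitrarily small by iterating further before invoking $\P[\,\cdot\,] \le 1$: for any $\delta \in (0,1)$, once $x \ge \mu/(p\delta)$ the factor $\frac{\mu}{px}$ in \eqref{eq:upperiter} is at most $\delta$, so the same iteration argument yields $\P[X \ge x] \le \delta^{\lfloor(x - \mu/(p\delta))/c\rfloor}$. Thus for every fixed $\beta$ we may choose $\delta$ small enough that $e^{\beta c}\delta < 1$, whence the tail-integral/series for $M(\beta)$ converges. This mirrors the structure of the proof of \cite[Corollary~2.1]{AB} cited in the statement, with $\mu$ replaced by $\mu/p$ throughout.

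The main obstacle is simply being careful with the bookkeeping in the iteration: \eqref{eq:upperiter} decreases the argument in fixed steps of size $c$, so one must track the ``floor'' arising from how many full steps fit before the argument drops below the threshold $\mu/(p\delta)$, and handle the initial segment $[0, \mu/(p\delta)]$ separately. None of this is deep, but getting clean explicit constants (which the paper does care about elsewhere, cf.\ Remark~\ref{rmk:main.constants}) requires attention. A secondary minor point is justifying the tail-integral representation of $M(\beta)$, which is routine once finiteness of the relevant sum is in hand — and indeed one can sidestep it entirely by working with the discrete sum $\sum_k e^{\beta(k+1)c}\P[X \ge kc]$ from the start.
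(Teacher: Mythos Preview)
Your proposal is correct and follows essentially the same approach as the paper. The paper's proof is slightly more direct: rather than first trying $x_0=2\mu/p$ and then upgrading to a $\beta$-dependent threshold via $\delta$, it takes $x_0\ge 2\mu e^{\beta c}/p$ from the outset, so that each iteration of \eqref{eq:upperiter} gains a factor of $\tfrac12 e^{-\beta c}$ (not just $\tfrac12$), and the resulting discrete sum $\sum_i e^{\beta(x_0+(i+1)c)}\P[X\ge x_0+ic]\le e^{\beta(x_0+c)}\sum_i 2^{-i}$ converges immediately for every $\beta$.
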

 \begin{proof}
 	As $X \ge 0$ the claim is clearly true for $\beta \le 0$. Let $\beta > 0$ and $x_0\geq 2\mu e^{\beta c}/p$.
 	As  in \cite[Corollary~2.1]{AB}, the idea is that beyond $x_0$, for every increase by $c$, the tail of the distribution
 	of $X$ decreases in probability by enough to make $M(\beta)$ finite.
 	More precisely, by \eqref{eq:upperiter}, for $x\geq x_0$,
 	\begin{align*}
 	\P[X\geq x+c] \leq \frac{\mu}{p(x+c)}\P[X\geq x]\leq \frac{1}{2} e^{-\beta c}\P[X\geq x].
 	\end{align*}
 	By iterating this bound, $\P[X\geq x+ic]\leq 2^{-i}e^{-i\beta c}$.
 	Applying this inequality, we have
 	\begin{align*}
 	M(\beta)=\E e^{\beta X} &\leq e^{\beta x}\P[X< x] + \sum_{i=0}^{\infty}e^{\beta(x+(i+1)c)} 
 	\P\bigl[x+ic \leq X < x+(i+1)c\bigr]\\
 	&\leq e^{\beta x}\P[X< x] + \sum_{i=0}^{\infty}e^{\beta(x+(i+1)c)} 
 	\P\bigl[X\geq x+ic]\\
 	&\leq e^{\beta x} \P[X<x] +\sum_{i=0}^{\infty}e^{\beta(x+c)} 2^{-i}<\infty.\qedhere
 	\end{align*}
 \end{proof}
 \begin{lemma}\label{lem:mgf}
 	If $X$ admits a $(c,p)$-bounded size bias coupling for the upper tail, then
 	\begin{align}
 	M(\beta)\leq \exp\biggl[\frac{\mu}{pc}\bigl(e^{\beta c} -1 \bigr)\biggr]\label{eq:mgfupper}
 	\end{align}
 	for all $\beta\geq 0$. 
 	
 	If $X$ admits $(c,p)$-bounded size bias coupling for the lower tail, then
 	\begin{align}
 	M(\beta)\leq \exp\biggl[\frac{p\mu}{c}\bigl(e^{\beta c} -1 \bigr)\biggr]\label{eq:mgflower}
 	\end{align}
 	for all $\beta\leq 0$.
 \end{lemma}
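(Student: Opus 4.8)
The plan is to follow the proof of the $p=1$ case in \cite{AB}: differentiate $\log M$, rewrite the derivative via the size biasing identity, estimate it using the $(c,p)$-boundedness hypothesis, and integrate. Throughout we use that $M$ is finite and smooth on the relevant range: in the upper tail case this is Proposition~\ref{prop:finite_mgf}, and in the lower tail case $M(\beta)=\E e^{\beta X}\le 1$ automatically for $\beta\le 0$ since $X\ge 0$. In either case one checks by dominated convergence that $M$ is differentiable there with $M'(\beta)=\E[Xe^{\beta X}]$ (for $\beta<0$ the integrand is uniformly bounded by $1/(|\beta|e)$; for $\beta\ge 0$ it is dominated by a constant times $e^{(\beta+1)X}$, which is integrable by Proposition~\ref{prop:finite_mgf}). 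Applying the defining property of the size biased distribution to $f(x)=e^{\beta x}$ then yields the identity $M'(\beta)=\mu\,\E[e^{\beta X^s}]$, which drives the whole argument.

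For the upper tail bound \eqref{eq:mgfupper}, fix $\beta\ge 0$; the key step is $\E[e^{\beta X^s}]\le \tfrac1p e^{\beta c}M(\beta)$. Exactly as in the proof of Lemma~\ref{lem:sbbound}, \eqref{eq:uppertail} gives, for every $x$, the tail bound $p\,\P[X^s\ge x]\le \P[X^s\le X+c,\,X^s\ge x]\le \P[X\ge x-c]$, using $\{X^s\le X+c,\,X^s\ge x\}\subseteq\{X\ge x-c\}$. Writing $e^{\beta X^s}=1+\int_0^\infty \beta e^{\beta t}\1_{X^s\ge t}\,dt$, taking expectations by Tonelli, applying this bound, and then performing the substitution $t\mapsto t+c$ together with $\P[X\ge s]=1$ for $s\le 0$, one obtains
\begin{align*}
\E[e^{\beta X^s}] &= 1+\int_0^\infty \beta e^{\beta t}\,\P[X^s\ge t]\,dt \\
&\le 1+\frac1p\int_0^\infty \beta e^{\beta t}\,\P[X\ge t-c]\,dt = 1+\frac1p\bigl(e^{\beta c}M(\beta)-1\bigr),
\end{align*}
which is at most $\tfrac1p e^{\beta c}M(\beta)$ since $p\le 1$. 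Hence $(\log M)'(\beta)=M'(\beta)/M(\beta)\le \tfrac{\mu}{p}e^{\beta c}$, and integrating from $0$ with $\log M(0)=0$ gives $\log M(\beta)\le \tfrac{\mu}{pc}(e^{\beta c}-1)$.

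For the lower tail bound \eqref{eq:mgflower} it suffices to treat $\beta<0$, since equality holds at $\beta=0$. Here the claim is $\E[e^{\beta X^s}]\ge p\,e^{\beta c}M(\beta)$. On $\{X^s\le X+c\}$ we have $\beta X^s\ge \beta X+\beta c$ because $\beta<0$, so $e^{\beta X^s}\ge e^{\beta c}e^{\beta X}$ there, and therefore $\E[e^{\beta X^s}]\ge e^{\beta c}\,\E[e^{\beta X}\1_{X^s\le X+c}]$. Writing $e^{\beta X}=\int_0^\infty(-\beta)e^{\beta t}\1_{X\le t}\,dt$, taking expectations, and invoking \eqref{eq:lowertail} in the form $\P[X^s\le X+c,\,X\le t]\ge p\,\P[X\le t]$ gives $\E[e^{\beta X}\1_{X^s\le X+c}]\ge p\,\E[e^{\beta X}]=pM(\beta)$, proving the claim. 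Thus $(\log M)'(\beta)\ge \mu p\,e^{\beta c}$ for $\beta<0$; integrating over $[\beta,0]$ with $\log M(0)=0$ yields $\log M(\beta)\le \tfrac{p\mu}{c}(e^{\beta c}-1)$. The only genuine obstacle is this comparison of $\E[e^{\beta X^s}]$ with $e^{\beta c}M(\beta)$: when $p=1$ one has the pointwise domination $e^{\beta X^s}\le e^{\beta c}e^{\beta X}$ (for $\beta\ge0$) and it is immediate, but for $p<1$ the event $\{X^s>X+c\}$ need not be null and one must route the estimate through the tail-function inequality $\P[X^s\ge x]\le \tfrac1p\P[X\ge x-c]$ (and its lower-tail analogue) coming from Lemma~\ref{lem:sbbound}; a secondary, routine point is the justification of $M'(\beta)=\mu\,\E[e^{\beta X^s}]$, which relies on the finiteness of $M$ supplied by Proposition~\ref{prop:finite_mgf}.
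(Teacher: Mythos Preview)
Your proof is correct and follows essentially the same approach as the paper: both establish the differential inequality $(\log M)'(\beta)\le \tfrac{\mu}{p}e^{\beta c}$ (resp.\ $(\log M)'(\beta)\ge p\mu e^{\beta c}$) via the size-bias identity $M'(\beta)=\mu\,\E e^{\beta X^s}$ and the $(c,p)$-boundedness, then integrate. The only cosmetic difference is that for the upper tail the paper bounds $M(\beta)\ge \E[e^{\beta(X^s-c)}\1_{X^s\le X+c}]$ and applies \eqref{eq:uppertail} inside a layer-cake integral, whereas you first extract the tail inequality $p\,\P[X^s\ge x]\le \P[X\ge x-c]$ and feed it into the layer-cake for $\E e^{\beta X^s}$; the lower-tail arguments are essentially identical.
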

 \begin{proof}
 	Let $(X,X^s)$ be a $(c,p)$-bounded size biased coupling for the upper tail, and let $\beta\geq 0$.
 	We will bound $M'(\beta)$ in terms of $M(\beta)$. It follows from the finiteness
 	of $M(\beta)$ for all $\beta$ proved in Proposition~\ref{prop:finite_mgf} that
 	$\mu\E e^{\beta X^s}=\E[Xe^{\beta X}]=M'(\beta)$.
 	Using $\beta\geq 0$, we have
 	\begin{align*} e^{\beta X} = e^{\beta (X^s - (X^s-X))} \geq e^{\beta (X^s - (X^s-X))}\1_{X^s \le X + c} \ge e^{\beta X^s-c}\1_{X^s \le X + c},
 	\end{align*}
 	whence
 	\begin{align}
 	M(\beta) = \E e^{\beta X} \geq \E\bigl[e^{\beta X^s-c}\1_{X^s \le X+c}\bigr]
 	&= \E \int_0^{\infty} \1\{\text{$x\leq e^{\beta (X^s-c)}$ and $X^s \le X+c$}\}\,dx\nonumber\\
 	&= \int_0^{\infty} \P\bigl[\text{$x\leq e^{\beta (X^s-c)}$ and $X^s\le X+c$}\}\bigr]\,dx.\label{eq:Mbeta1}
 	\end{align}
  As a consequence of \eqref{eq:uppertail}, 
  \begin{align*}
    \P\bigl[\text{$x\leq e^{\beta (X^s-c)}$ and $X^s\le X+c$}\}\bigr]\geq
      p\P\bigl[x\leq e^{\beta (X^s-c)}\bigr].
  \end{align*}
  Applying this to \eqref{eq:Mbeta1} gives
 	\begin{align*}
 	M(\beta)
 	&\geq p\int_0^{\infty}\P\bigl[x\leq e^{\beta (X^s-c)}\bigr]\,dx = p\E e^{\beta(X^s-c)}
 	= \frac{pM'(\beta)}{\mu e^{\beta c}}.
 	\end{align*}
 	Thus
 	\begin{align*}
 	(\log M)'(\beta) = \frac{M'(\beta)}{M(\beta)}\leq \frac{\mu e^{\beta c}}{p},
 	\end{align*}
 	and integrating we obtain
 	\begin{align*}
 	\log M(\beta) &= \log M(\beta) - \log M(0)\leq\int_0^{\beta}\frac{\mu e^{cu}}{p}\,du
 	= \frac{\mu}{pc}\bigl(e^{\beta c}-1\bigr).
 	\end{align*}
 	Exponentiating proves \eqref{eq:mgfupper}.
 	
 	Next, let $(X,X^s)$ be a $(c,p)$-bounded size bias coupling for the lower tail, and 
 	let $\beta\leq 0$.
 	Note that $M(\beta)$ is now finite simply because $\beta\leq 0$,
 	and again $M'(\beta)=\mu\E e^{\beta X^s}$.
 	Now using $e^{\beta X^s}\geq e^{\beta(X+c)}\1_{X^s \le X+c}$ we obtain
 	\begin{align*}
 	\frac{M'(\beta)}{\mu}=\E e^{\beta X^s}\geq \E \bigl[e^{\beta(X+c)}\1_{X^s \le X+c}\bigr]
 	&= \E \int_0^{\infty} \1\{\text{$x\leq e^{\beta(X+c)}$ and $X^s \le X+c$}\}\,dx\\
 	&= \int_0^{\infty}\P[\text{$x\leq e^{\beta(X+c)}$ and $X^s \le X+c$}]\,dx.
 	\end{align*}
 	By \eqref{eq:lowertail},
 	\begin{align*}
 	\frac{M'(\beta)}{\mu} &\geq p\int_0^{\infty}\P[x\leq e^{\beta(X+c)}]\,dx = p\E e^{\beta(X+c)}
 	= pe^{\beta c}M(\beta).
 	\end{align*}
 	Therefore
 	\begin{align*}
 	(\log M)'(\beta)\geq p \mu e^{\beta c},
 	\end{align*}
 	and
 	\begin{align*}
 	\log M(\beta) &= -\int_\beta^0 (\log M)'(u)\,du\leq \int_\beta^0 -p \mu e^{cu}\,du
 	= \frac{p \mu }{c}\bigl(e^{\beta c}-1\bigr).\qedhere
 	\end{align*}
 \end{proof}

 \begin{proof}[Proof of Theorem~\ref{thm:sizebias_concentration}]
 	If $X$ admits a $(c,p)$-bounded size bias coupling for the upper tail, then
 	by Markov's inequality and Lemma~\ref{lem:mgf},
 	\begin{align*}
 	\P[X - \mu/p \geq x] = \P\bigl[e^{\beta X} \geq e^{\beta (x+\mu/p)}\bigr] &\leq e^{-\beta (x+\mu/p)}M(\beta) \leq \exp\biggl[\frac{\mu}{pc}
 	\bigl(e^{\beta c}-1\bigr) - \beta (x+\mu/p)\biggr]
 	\end{align*}
 	for $\beta\geq 0$.
 	Setting $\beta=\log(px/\mu+1)/c$, which is nonnegative for $x \ge 0$, yields the first inequality in \eqref{eq:niceuppertailbound}. The second inequality in \eqref{eq:niceuppertailbound} now follows from the first inequality in \eqref{ineq:h(u)}.

 	To prove \eqref{eq:nicelowertailbound}, for any $\beta\leq 0$,
 	\begin{align*}
 	\P[X-p \mu \leq -x] = \P\bigl[e^{\beta X} \geq e^{\beta (-x+p\mu )}\bigr]\leq 
 	M(\beta)e^{\beta (x-p \mu )}\leq \exp\biggl[\frac{p\mu}{c}\bigl(e^{\beta c} -1 \bigr)
 	+\beta (x-p \mu )\biggr].
 	\end{align*}
 	Setting $\beta=\log(-x/p\mu+1)/c$, which is nonpositive for $0 \le x < p\mu$, yields the first inequality in \eqref{eq:nicelowertailbound}. The second inequality in \eqref{eq:nicelowertailbound} now follows from the second inequality in \eqref{ineq:h(u)}.
\end{proof}

Next we turn towards the proof of Theorem~\ref{thm:sizebias_concentration_bennett}, beginning
with the following simple lemma.

\begin{lemma}\label{lem:exp.ineq}
 	If $0\leq y\leq 1$, then for all $x\in\RR$,
 	\begin{align}
 	e^{xy} &\leq 1+(e^x-1)y \label{eq:e^xy}\\\intertext{and}
 	e^{-xy} &\geq 1-(e^x-1)y.\label{eq:e^-xy}
 	\end{align}
 \end{lemma}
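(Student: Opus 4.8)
The plan is to prove both inequalities by a convexity argument applied to the exponential function on the interval $[0,1]$. First I would observe that \eqref{eq:e^xy} and \eqref{eq:e^-xy} are equivalent: replacing $x$ by $-x$ in \eqref{eq:e^xy} gives $e^{-xy}\le 1+(e^{-x}-1)y$, which is not quite \eqref{eq:e^-xy}, so instead I would note that \eqref{eq:e^-xy} follows from \eqref{eq:e^xy} by a different manipulation, or simply prove each directly. In fact the cleanest route is to prove \eqref{eq:e^xy} and then derive \eqref{eq:e^-xy} from it by writing $e^{-xy} = 1/e^{xy} \ge 1/(1+(e^x-1)y)$ when $1+(e^x-1)y>0$ and checking that $1/(1+(e^x-1)y)\ge 1-(e^x-1)y$, which is just $1\ge 1-((e^x-1)y)^2$; one must separately handle the case $1+(e^x-1)y\le 0$, where the right-hand side of \eqref{eq:e^-xy} is $\le 0\le e^{-xy}$ and the inequality is trivial.

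For \eqref{eq:e^xy} itself, the key observation is that the map $t\mapsto e^t$ is convex on $\RR$, so on the segment with endpoints $0$ and $x$ the graph lies below the chord: for $y\in[0,1]$,
\begin{align*}
e^{xy} = e^{(1-y)\cdot 0 + y\cdot x} \le (1-y)e^0 + y e^x = 1 + (e^x-1)y,
\end{align*}
which is exactly the claim. This works uniformly for all real $x$ (positive, negative, or zero) and all $y\in[0,1]$, so no case distinction is needed for this part.

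There is essentially no obstacle here; the only point requiring a little care is the derivation of \eqref{eq:e^-xy} from \eqref{eq:e^xy} when the bound $1+(e^x-1)y$ can be negative (which happens only for $x<0$ with $y$ close to $1$), but as noted that case is immediate since then the asserted lower bound is nonpositive. Alternatively, one can avoid this entirely by applying the chord bound a second time with $x$ replaced by $-x$ to get $e^{-xy}\le 1+(e^{-x}-1)y$ and then checking $1+(e^{-x}-1)y \le$ — no, that direction goes the wrong way, so the reciprocal argument (or a direct convexity/tangent-line argument for the lower bound) is the way to go. I would write it up using the chord inequality for \eqref{eq:e^xy} and the short reciprocal computation, with the sign remark, for \eqref{eq:e^-xy}.
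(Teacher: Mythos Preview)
Your proof is correct. Your route differs from the paper's: the paper proves \eqref{eq:e^xy} by observing that $u\mapsto u^y$ is concave on $[0,\infty)$ and hence lies below its tangent line at $u=1$, giving $u^y\le 1+(u-1)y$, then substitutes $u=e^x$; it proves \eqref{eq:e^-xy} symmetrically via convexity of $u\mapsto u^{-y}$ and the tangent line at $u=1$. Your chord argument for \eqref{eq:e^xy} (convexity of $t\mapsto e^t$) is arguably the more natural one-liner, while the paper's approach has the virtue of handling both inequalities by the same tangent-line mechanism without the reciprocal detour. One small remark: your case distinction for \eqref{eq:e^-xy} is never actually needed, since $e^x-1>-1$ and $0\le y\le 1$ force $1+(e^x-1)y>0$ for all real $x$.
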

 \begin{proof}
 	The function $f(u) = u^y$ for $u\geq 0$ is concave, and hence it lies below its
 	tangent line at $u=1$, showing that
 	\begin{align*}
 	u^y &\leq 1 + (u-1)y.
 	\end{align*}
 	Substituting $u=e^x$ shows \eqref{eq:e^xy}.
 	
 	To prove \eqref{eq:e^-xy}, the function $g(u)=u^{-y}$ is convex and hence lies above its
 	tangent line at $u=1$, and the same argument completes the proof.
 \end{proof}

\begin{proof}[Proof of Theorem~\ref{thm:sizebias_concentration_bennett}]
	We start with the upper tail bound, assuming for now that $c=1$. As $\{X^s \le X+c\} \supseteq \Bounded$, the hypothesis of a) implies \eqref{eq:uppertail}, hence the moment generating function 
 $M(\beta) = \E e^{\beta X}$ of $X$ is finite for all $\beta$ by Proposition~\ref{prop:finite_mgf}.
	Assume $\beta \geq 0$. Applying $\P[\Bounded\mid X^s]\geq p$, we have 
	\begin{align*}
	\E\bigl[ e^{ \beta X^s}\1_\Bounded \bigr] \geq p\E\bigl[ e^{\beta X^s} \bigr] 
	= \frac{p}{\mu}\E\bigl[Xe^{\beta X}\bigr]=\frac{p}{\mu}M'(\beta),
	\end{align*}
	since by finiteness of the moment generating function we can differentiate inside the expectation. 
	Rewriting this inequality and using the definition of $D$ we have
	\begin{align*}
	M'(\beta) &\leq \frac{\mu}{p}\E\bigl[ e^{\beta X^s}\1_\Bounded \bigr]
	\leq \frac{\mu}{p}\E\bigl[ e^{\beta D}e^{\beta X}\1_\Bounded \bigr].
	\end{align*}
	Since $0\leq D\leq 1$ on $\Bounded$, we can apply Lemma~\ref{lem:exp.ineq} to conclude that
	\begin{align*}
	\E\bigl[ e^{\beta D}\1_\Bounded\mid X \bigr] &\leq\E\biggl[ \Bigl(1 + (e^\beta-1)D\Bigr)
	\1_\Bounded\;\Big\vert\; X\biggr]\\
	&= 1 + (e^\beta-1)\E[D\1_\Bounded\mid X]\leq 1 +\frac{\tau^2}{\mu}(e^\beta-1).
	\end{align*}
	Thus
	\begin{align*}
	M'(\beta) &\leq \frac{1}{p}\bigl(\mu +\tau^2(e^\beta-1)\bigr)M(\beta),
	\end{align*}
	and
	\begin{align*}
	\log M(\beta) =\int_0^\beta (\log M)'(u)du \leq \int_0^\beta \frac{1}{p}\bigl(\mu +\tau^2(e^{u}-1)\bigr)du
	=\frac{1}{p}\bigl( \mu  \beta+ \tau^2 (e^\beta-1-\beta)\bigr).
	\end{align*}
	By Markov's inequality,
	\begin{align*}
	\P[X - \mu/p \geq x] &\leq M(\beta)e^{-\beta(x+\mu/p)} \leq\exp\biggl( \frac{\tau^2}{p}
	(e^\beta-1-\beta) -\beta x \biggr).
	\end{align*}
	Substituting $\beta=\log\bigl(1+px/\tau^2\bigr)$, which is nonnegative for $x \ge 0$, yields
	\begin{align}\label{eq:c=1}
	\P[X - \mu/p \geq x] &\leq \exp\biggl[ -\frac{\tau^2}{p} h\biggl(\frac{px}{\tau^2} \biggr)\biggr].
	\end{align}
  Now, we consider the general case $c>0$. 
  We obtain the	first inequality in \eqref{eq:upper.tail.gen} by rescaling and applying \eqref{eq:c=1}:
	\begin{align*}
	\P[X-\mu/p\geq x] = \P[X/c-\mu/pc\geq x/c] &\leq \exp\biggl[-\frac{\tau^2}{pc^2}h\biggl(\frac{px/c}{\tau^2/c^2}\biggr)\biggr],
	\end{align*}
	noting that we must replace $\tau$ by $\tau/c$ when applying \eqref{eq:c=1} to $X/c$. The second inequality now follows by the first inequality in \eqref{ineq:h(u)}.
	
	Next we prove the lower tail bound, again assuming $c=1$. Using that the moment generating function $M(-\beta)$ exists for all $\beta \ge 0$, we have
	\begin{align*}
	M'(-\beta) &= \mu\E e^{-\beta X^s} \geq \mu\E\bigl[e^{-\beta X^s}\1_\Bounded \bigr] = 
	\mu\E\bigl[ e^{-\beta (X^s-X)}e^{-\beta X}\1_\Bounded \bigr]\geq \mu\E\bigl[ e^{-\beta D}e^{-\beta X}\1_\Bounded \bigr].
	\end{align*}
	Since $0\leq D\leq 1$ on $\Bounded$, we can apply Lemma~\ref{lem:exp.ineq} to obtain the bound
	\begin{align*}
	\E\bigl[ e^{-\beta D}\1_\Bounded \mid X \bigr] 
	&\geq \E\bigl[ \bigl(1 - (e^\beta-1)D\bigr)\1_\Bounded\mid X\bigr]\\
	&= \P[\Bounded\mid X] - (e^\beta-1)\E\bigl[D\1_\Bounded\mid X\bigr]\\
	&\geq p - \frac{\tau^2}{\mu}(e^\beta-1).
	\end{align*}
	We then have
	\begin{align*}
	M'(-\beta) &\geq \bigl(p\mu -\tau^2(e^\beta-1)\bigr)M(-\beta),
	\end{align*}
	and arguing as for the upper tail leads to
	\begin{align*}
	\log M(-\beta) &\leq \tau^2(e^\beta-1-\beta) - \mu p \beta.
	\end{align*}
	Applying Markov's inequality and setting $\beta=\log(1+x/\tau^2)$, which is nonnegative for $x \ge 0$, gives
	\begin{align*}
	\P[X - p\mu\leq -x] &\leq M(-\beta)e^{-\beta(x-\mu p) } = \exp\biggl[-\tau^2 h\biggl(\frac{x}{\tau^2}\biggr)\biggr],
	\end{align*}
	and scaling by $c >0$ as before now yields the first inequality of \eqref{eq:lower.tail.gen}.
  The second inequality now follows by the second inequality of \eqref{ineq:h(u)}.
\end{proof}

\section{Size biased couplings for random regular graphs}\label{sec:graph.couplings}
Suppose that $A$ is the adjacency matrix of a random regular graph.
In this section, we construct size biased couplings for linear combinations of the entries of $A$ with
positive coefficients. 
Statistics of the form include the number of edges between 
two given sets of vertices, and the positive part of a truncated quadratic form, as described
in Section~\ref{sec:kahn.szemeredi}. To construct a size biased coupling
for any statistic of this form, it is enough to give a coupling between $A$ and $A^{(uv)}$,
which we define to have the distribution of $A$ conditional on $A_{uv}=1$.
The size biased coupling can then be defined as a mixture of $A^{(uv)}$ for different choices of $(u,v)$,
following the standard recipe for a size biased coupling given in Lemma~\ref{lem:sizebias.sum}.

To make the coupling between $A$ and $A^{(uv)}$, we will
  use switchings, which are local manipulations of a graph that preserve regularity;
  see \cite[Section~2.4]{Wormald99} for an introduction.
  The most natural thing to do to form the coupling 
  is to apply a switching to $A$ at random out of the ones that yield a graph
  containing $uv$. This creates a matrix whose distribution is slightly off from what we want.
  We then tweak the coupling to get the right distribution, taking care that most of the time,
  $A$ and $A^{(uv)}$ still differ from each other by a switching.
  
  Switchings, Stein's method, and concentration have bumped into each other in a variety of ways in
  the past.   
  In the configuration model, switchings give easy proofs of concentration
  by martingale arguments \cite[Theorem~2.19]{Wormald99}. In the uniform model, switchings have been
  applied to prove tail bounds by ad hoc arguments; 
  for some examples, see \cite[Section~2.4]{Wormald99}, \cite[Theorem~4]{MWW},
  and \cite[Lemma~16]{BFSU}. 
  In \cite{BKY}, switchings are combined with a nonstandard martingale argument to prove concentration
  of the resolvent of the adjacency matrix of a random regular graph.  
  In \cite{NickDisc}, switchings were used to define an
  exchangeable pair in order to apply \cite{ch07} to prove concentration in random digraphs. 
  Switchings and exchangeable pairs also met in \cite{Joh}, where they were used for Poisson approximation.
  Janson observed that switchings produce
  ``approximate'' couplings of graphs conditioned to have certain edges \cite[Remark~5.6]{Jan09}.
  In this section, we essentially make these approximate couplings exact in order to construct size biased
  couplings.

  To make switchings work to achieve our goals, we will view things from a more combinatorial perspective.
  First, we recast the problem of constructing a coupling as constructing a bipartite graph.
  We call a bipartite graph \emph{biregular} if all vertices within each vertex class have the
  same degree, recalling that the degree of a vertex in a weighted graph is the sum of the weights
  of the edges incident to the vertex.
  \begin{lemma}\label{lem:bip.to.coupling}
    Suppose that $\GGG$ is a biregular weighted bipartite graph on vertex sets $U$ and $V$.
    Let $X$ be uniformly distributed on $U$, and let $X'$ be given by walking from $X$
    along an edge with probability proportionate to its weight. Then $X'$ is uniformly distributed on $V$.
  \end{lemma}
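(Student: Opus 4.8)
The plan is to compute both sides of the would-be distributional identity directly using the biregularity hypothesis, exploiting the fact that a biregular bipartite graph has, in particular, a constant total edge weight incident to each vertex of $V$. Write $w(\{x,y\})$ for the weight of the edge between $x \in U$ and $y \in V$ (set to $0$ if there is no edge), let $d_U$ be the common value of $\sum_{y \in V} w(\{x,y\})$ over $x \in U$, and let $d_V$ be the common value of $\sum_{x \in U} w(\{x,y\})$ over $y \in V$. First I would record the double-counting identity $|U|\,d_U = \sum_{x,y} w(\{x,y\}) = |V|\,d_V$, which pins down $d_V = |U| d_U / |V|$.

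Next I would unpack the definition of the walk. Since $X$ is uniform on $U$, for each fixed $x$ the conditional law of $X'$ given $X = x$ puts mass $w(\{x,y\})/d_U$ on $y \in V$ (this is well-defined as $d_U > 0$ by biregularity, assuming the nondegenerate case; if $d_U = 0$ the graph is edgeless and the statement is vacuous, which I would note in passing). Then for any $y \in V$,
\begin{align*}
\pr[X' = y] &= \sum_{x \in U} \pr[X = x]\,\pr[X' = y \mid X = x] = \frac{1}{|U|}\sum_{x \in U} \frac{w(\{x,y\})}{d_U} = \frac{d_V}{|U|\,d_U} = \frac{1}{|V|},
\end{align*}
using $\sum_{x} w(\{x,y\}) = d_V$ from biregularity and then $|U| d_U = |V| d_V$ from the double count. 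This is exactly the statement that $X'$ is uniform on $V$.

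There is no real obstacle here — the lemma is essentially a restatement of the fact that a biregular bipartite graph, viewed as giving a Markov kernel from $U$ to $V$ by normalizing edge weights, pushes the uniform measure forward to the uniform measure. The only points requiring a word of care are the degenerate case (empty graph, or $V$ empty) and making explicit that ``walking along an edge with probability proportionate to its weight'' means the normalization is by $d_U$, which is legitimate precisely because that normalizing constant does not depend on $x$. I would keep the writeup to the three or four lines of display above.
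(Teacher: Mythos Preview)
Your proof is correct and takes essentially the same approach as the paper's own proof: both compute $\P[X'=y]$ directly via the law of total probability, use biregularity to evaluate the sum of weights over $x\in U$, and invoke the double-counting identity $|U|\,d_U = |V|\,d_V$ to conclude.
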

  \begin{proof}
    Let every vertex in $U$ have degree~$d$ and every vertex in $V$ have degree~$e$.
    Let $w(u,v)$ be the weight of the edge from $u$ to $v$ or $0$ if there is none.
    Since every vertex in $U$ has degree~$d$, 
    \begin{align*}
      \P[X'=v\mid X=u] = \frac{w(u,v)}{d}.
    \end{align*}
    Thus
    \begin{align*}
      \P[X'=v] = \sum_{u\in U} \P[X'=v\mid X=u]\P[X=u] = \frac{1}{\abs{U}}\sum_{u\in U}\frac{w(u,v)}{d},
    \end{align*}
    and since every vertex in $V$ has degree~$e$, this is $e/d\abs{U}=1/\abs{V}$.
  \end{proof}

  Thus, our goal in this section will be to construct a biregular bipartite graph $\GGG$
  on the vertex sets $\Gg$ and 
  $\Gg_{uv}$, where $\Gg$ is the set of adjacency matrices of
  simple $d$-regular graphs on $n$ vertices, and $\Gg_{uv}$ is
  the subset of $\Gg$ of matrices with $uv$ entry equal to $1$. Roughly speaking, the goal is for the edges
  of $\GGG$ to have as their endpoints graphs that are as similar to each other as possible.

We now define our switchings, which in the combinatorics literature
are sometimes called double switchings.
See Figure~\ref{fig:doubleswitching} for a pictorial depiction
of what we formally define as follows.%
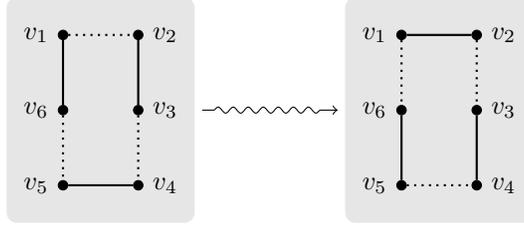
\begin{figure}
    \begin{center}
      \begin{tikzpicture}
        \begin{scope}
        \fill[black!10, rounded corners] (-0.75,-0.5) rectangle (1.75,2.5);
        \path (0,2) node[vert,label=left:$v_1$] (1) {}
              (1,2) node[vert,label=right:$v_2$] (2) {}
              (0,1) node[vert,label=left:$v_6$] (u) {}
              (1,1) node[vert,label=right:$v_3$] (ut) {}
              (0,0) node[vert,label=left:$v_5$] (v) {}
              (1,0) node[vert,label=right:$v_4$] (vt) {};
        \draw[thick] (1)--(u) (2)--(ut) (v)--(vt);
        \draw[thick,dotted] (1)--(2) (u)--(v) (ut)--(vt);
        \end{scope}
        \draw[->, decorate, decoration={snake,amplitude=.4mm,
            segment length=2mm,post length=1.5mm,pre length=1.5mm}] 
            (1.85, 1)--(3.65,1);
        \begin{scope}[shift={(4.5,0)}]
        \fill[black!10, rounded corners] (-0.75,-0.5) rectangle (1.75,2.5);
        \path (0,2) node[vert,label=left:$v_1$] (1) {}
              (1,2) node[vert,label=right:$v_2$] (2) {}
              (0,1) node[vert,label=left:$v_6$] (u) {}
              (1,1) node[vert,label=right:$v_3$] (ut) {}
              (0,0) node[vert,label=left:$v_5$] (v) {}
              (1,0) node[vert,label=right:$v_4$] (vt) {};
        \draw[thick,dotted] (1)--(u) (2)--(ut) (v)--(vt);
        \draw[thick] (1)--(2) (u)--(v) (ut)--(vt);
        \end{scope}                
      \end{tikzpicture}
    \end{center}
    \caption{A solid line means an edge between two vertices, and a dotted line
    means that the two vertices are nonequal and are not connected. The action of 
    replacing the subgraph indicated by the left diagram by the subgraph indicated
    by the right diagram is a \emph{switching at $(v_1,\ldots,v_6)$}.}
    \label{fig:doubleswitching}
  \end{figure}%
\begin{defn}\label{def:double.switching}
    Let $A$ be the adjacency matrix of a simple regular graph. 
    Suppose that $A_{v_2v_3}=A_{v_4v_5}=A_{v_6v_1}=1$ and $A_{v_1v_2}=A_{v_3v_4}=A_{v_5v_6}=0$,
    and that $v_1\neq v_2$, $v_3\neq v_4$, and $v_5\neq v_6$. Note that we do not assume that
    all vertices $v_1,\ldots,v_6$ are distinct.
    Then $(v_1,\ldots,v_6)$ is a \emph{valid switching} for $A$, and we define the application
    of the switching to $A$ as the adjacency matrix of the graph
    with edges $v_1v_2$, $v_3v_4$, and $v_5v_6$ added and $v_2v_3$, $v_4v_5$, and $v_6v_1$ deleted.
  \end{defn}

It is not obvious that a valid switching $(v_1,\ldots,v_6)$ preserves regularity
if $v_1,\ldots,v_6$ are not all distinct. To see that it does, consider the vertex~$v_1$.
We will show that its degree is unchanged by the switching. Identical arguments
apply to the other vertices.
By the definition of valid switching,
$v_1$ cannot equal $v_2$ or $v_6$, since it is connected to $v_6$ and assumed
nonequal to $v_2$. It cannot equal $v_3$, since $A_{v_3v_2}=1$ but $A_{v_1v_2}=0$, and in the same
way it cannot be $v_5$. If $v_1\neq v_4$, then $v_1v_2$ and $v_1v_6$ are the only edges incident
to $v_1$, and its degree is unchanged when $v_1v_2$ is added and $v_1v_6$ is deleted.
If $v_1=v_4$, then similar arguments show that $v_2,v_3,v_5,v_6$ are distinct. Then
the switching adds $v_1v_6$ and $v_1v_5$ and deletes $v_1v_3$ and $v_1v_2$, again
leaving the degree of $v_1$ unchanged.

  \begin{lemma}\label{lem:double.switching.counts}
    For a given adjacency matrix $A$, let
    let $s_{uv}(A)$ be the number of valid switchings of the form
    $(u,v,\cdot,\cdot,\cdot,\cdot)$,
    and let $t_{uv}(A)$ be the number of valid switchings of the form
    $(u,\cdot,\cdot,\cdot,\cdot,v)$. For $u\neq v$ with $A_{uv}=0$,
    \begin{align}
        d^3(n-2d-2) \leq s_{uv}(A)\leq  d^3(n-d-1) \label{eq:double.switching.nonneighbors}
    \end{align}
    and for $u\neq v$ with $A_{uv}=1$,
    \begin{align}
      d^2(n-d-1)(n-2d-2)\leq t_{uv}(A)\leq d^2(n-d-1)^2. \label{eq:double.switching.neighbors}
    \end{align} 
  \end{lemma}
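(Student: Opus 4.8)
The plan is to count, for each of the two statistics, the four ``free'' vertices of a switching by walking along the path $v_1=u,v_2,\dots,v_6$ of Definition~\ref{def:double.switching}, choosing coordinates in an order that keeps every step elementary. The one mild subtlety is that at some point one must choose a vertex lying simultaneously in a neighborhood $\neighbors{x}$ \emph{and} outside a set $B$ (a neighborhood together with one extra vertex); here the naive forward ``multiply $d$ by $n-1-d$ by $\dots$'' bound fails for the lower estimate, since $\neighbors{x}$ could a priori sit entirely inside $B$. I resolve this by pairing that step with an adjacent edge of the path and counting \emph{ordered edges avoiding a prescribed set on each side}: the number of ordered pairs $(x,y)$ with $A_{xy}=1$ is exactly $nd$, and for any $B\subseteq[n]$ the number of such pairs with $x\in B$ is $\sum_{w\in B}\deg(w)=d\abs{B}$, so inclusion–exclusion gives a clean lower bound.

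For $s_{uv}(A)$ with $A_{uv}=0$: putting $v_1=u$, $v_2=v$, a valid switching is precisely a choice of $v_3\in\neighbors{v}$, $v_6\in\neighbors{u}$, and an ordered edge $(v_4,v_5)$ with $v_4\in\nonneighbors{v_3}$ and $v_5\in\nonneighbors{v_6}$. (The hypothesis $A_{uv}=0$, $u\ne v$ supplies the conditions $A_{v_1v_2}=0$, $v_1\ne v_2$; the remaining required inequalities $v_3\ne v_4$ and $v_5\ne v_6$ are exactly the membership of $v_4$ in $\nonneighbors{v_3}$ and of $v_5$ in $\nonneighbors{v_6}$; absence of loops makes the edge conditions force the other inequalities; and the definition imposes no further distinctness.) I would first choose $v_3$ ($d$ ways) and $v_6$ ($d$ ways), then estimate the number of admissible ordered edges $(v_4,v_5)$: from above by dropping the $v_5$-constraint, giving $\abs{\nonneighbors{v_3}}\cdot d=(n-1-d)d$; from below by inclusion–exclusion on the $nd$ ordered edges, discarding those with $v_4\in\neighbors{v_3}\cup\{v_3\}$ (at most $d(d+1)$ of them) and those with $v_5\in\neighbors{v_6}\cup\{v_6\}$ (at most $d(d+1)$), leaving at least $nd-2d(d+1)=d(n-2d-2)$. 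Multiplying by $d^2$ yields \eqref{eq:double.switching.nonneighbors}.

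For $t_{uv}(A)$ with $A_{uv}=1$: putting $v_1=u$, $v_6=v$, a valid switching is precisely a choice of an ordered edge $(v_2,v_3)$ with $v_2\in\nonneighbors{u}$, together with an ordered edge $(v_4,v_5)$ with $v_4\in\nonneighbors{v_3}$ and $v_5\in\nonneighbors{v}$ (the condition $A_{v_6v_1}=1$ is given). I would first choose $(v_2,v_3)$: there are exactly $\abs{\nonneighbors{u}}\cdot d=(n-1-d)d$ such edges. Then, exactly as before, the number of admissible ordered edges $(v_4,v_5)$ lies between $d(n-2d-2)$ (inclusion–exclusion, removing the at most $d(d+1)$ edges with $v_4\in\neighbors{v_3}\cup\{v_3\}$ and the at most $d(d+1)$ with $v_5\in\neighbors{v}\cup\{v\}$) and $(n-1-d)d$ (choose $v_5\in\nonneighbors{v}$, then $v_4\in\neighbors{v_5}$). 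Multiplying the two ranges gives \eqref{eq:double.switching.neighbors}.

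The only genuine obstacle is the bookkeeping point flagged at the outset: one must resist counting in the naive path order and instead group the doubly constrained vertex with a neighboring edge so that $d$-regularity can be used through a degree sum; after that, the argument is mechanical. The remaining care needed is simply to confirm that the coincidences the definition permits among $v_1,\dots,v_6$ (it forbids only $v_1\ne v_2$, $v_3\ne v_4$, $v_5\ne v_6$) impose nothing beyond what is already captured by the $\neighbors{\cdot}$ and $\nonneighbors{\cdot}$ memberships, which they do not, so the displayed counts are the exact counts up to the indicated over-/under-estimation.
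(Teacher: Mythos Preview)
Your proof is correct and is essentially the same argument as the paper's, with only a minor difference in bookkeeping. The paper also walks along the hexagon and isolates the ordered edge $(v_4,v_5)$; however, it enforces one of the two endpoint constraints exactly (choosing $v_5\in\nonneighbors{v_6}$, then $v_4\in\neighbors{v_5}$, giving $d^3(n-d-1)$ tuples) and then subtracts the at most $d^3+d^4$ tuples violating the remaining constraint $v_4\in\nonneighbors{v_3}$. Your version instead starts from the full set of $nd$ ordered edges and removes both endpoint violations by a union bound. The arithmetic collapses to the same lower bound $d^3(n-2d-2)$ in either organization, and the upper bounds are identical.
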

  \begin{proof}
    We start by bounding $s_{v_1v_2}(A)$.
    Consider the $d^3(n-d-1)$ tuples $(v_1,v_2,v_3,\ldots,v_6)$ given by
    choosing $v_6\in\neighbors{v_1}$ and $v_3\in\neighbors{v_2}$, then $v_5\in\nonneighbors{v_6}$,
    and finally $v_4\in\neighbors{v_5}$
    (Figure~\ref{fig:doubleswitching} is very helpful here). 
    This is an upper bound for $s_{v_1v_2}(A)$. For the lower bound,
    let $K$ be the number of these tuples that do not allow for a switching, 
    so that 
    \begin{align*}
      s_{v_1v_2}(A) = d^3(n-d-1) - K.
    \end{align*}
    \begin{figure}
    \begin{center}
      \begin{tikzpicture}
        \begin{scope}
        \fill[black!10, rounded corners] (-0.75,-0.5) rectangle (2.45,2.5);
        \path (0,2) node[vert,label=left:$v_1$] (1) {}
              (1,2) node[vert,label=right:$v_2$] (2) {}
              (0,1) node[vert,label=left:$v_6$] (u) {}
              (1,1) node[vert,label={right:$v_3=v_4$}] (ut) {}
              (0,0) node[vert,label=left:$v_5$] (v) {};
        \draw[thick] (1)--(u) (2)--(ut) (v)--(ut);
        \draw[thick,dotted] (1)--(2);
        \end{scope}
        \begin{scope}[shift={(5,0)}]
        \fill[black!10, rounded corners] (-0.75,-0.5) rectangle (1.75,2.5);
        \path (0,2) node[vert,label=left:$v_1$] (1) {}
              (1,2) node[vert,label=right:$v_2$] (2) {}
              (0,1) node[vert,label=left:$v_6$] (u) {}
              (1,1) node[vert,label=right:$v_3$] (ut) {}
              (0,0) node[vert,label=left:$v_5$] (v) {}
              (1,0) node[vert,label=right:$v_4$] (vt) {};
        \draw[thick] (1)--(u) (2)--(ut) (v)--(vt) (ut)--(vt);
        \draw[thick,dotted] (1)--(2) ;
        \end{scope}                
      \end{tikzpicture}
    \end{center}
      \caption{A tuple $(v_1,\ldots,v_6)$ counted by $K$ coincides with one of the
      two subgraphs pictured above,
      with solid lines denoting edges and dotted lines denoting that the endpoints
      are neither equal nor neighbors.
      For a given choice of $v_1$ and $v_2$, 
      there are at most $d^3$ subgraphs of the first kind and $d^4$ 
      of the second kind.}\label{fig:double.Kbound}
    \end{figure}
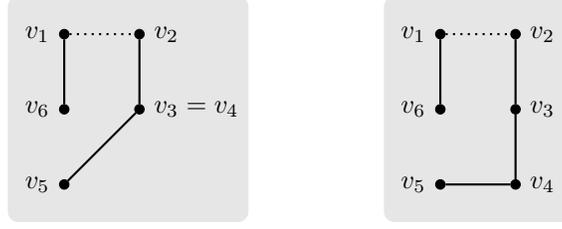%
    Now, we bound $K$ from above (see Figure~\ref{fig:double.Kbound}).     
    A tuple chosen as above allows for a
    switching if and only if $v_3\in\nonneighbors{v_4}$.
    The number of these tuples where $v_3=v_4$ is at most $d^3$, since in this case
    $v_3\in\neighbors{v_2}$,
    $v_5\in\neighbors{v_3}$, and $v_6\in\neighbors{v_1}$, making for $d^3$ choices total. 
    Similarly, the number of these tuples where $v_3\in\neighbors{v_4}$ is at most $d^4$. 
    Thus $K\leq d^4+d^3$, and
    $s_{ab}(A)\geq d^3(n-2d-2)$.

    The bound for $t_{v_1v_6}(A)$ is essentially the same. Consider the tuples $(v_1,\ldots,v_6)$
    given by choosing $v_2\in\nonneighbors{v_1}$, then $v_3\in\neighbors{v_2}$, then
    $v_5\in\nonneighbors{v_6}$, and last $v_4\in\neighbors{v_5}$.
    There are at most $d^2(n-d-1)^2$ of these, giving an upper bound for $t_{v_1v_6}(A)$. For the lower bound,
    let $L$ be the number of these tuples that are not valid switchings.
    A tuple fails to be a valid switching if $v_3$ and $v_4$ are equal or are neighbors,
    and we obtain a bound $L\leq (n-d-1)(d^2+d^3)$ by counting as in the first case.
    Thus
    \begin{align*}
      t_{v_1v_6}(A)&\geq d^2(n-d-1)^2-(n-d-1)(d^2+d^3) = d^2(n-d-1)(n-2d-2).\qedhere
    \end{align*}
  \end{proof}
  
\begin{rmk}
  Switchings in which two rather than three edges are added and deleted are known as simple or
  single switchings.
  They have been used to analyze regular graphs, though they are typically less effective
  than double switchings, as mentioned in \cite[Section~2.4]{Wormald99}.
  The problem
  is that in the equivalent of \eqref{eq:double.switching.nonneighbors} for simple switchings,
  no lower bound is possible. There is no further improvement for us to be found in higher order switchings,
  however.
\end{rmk}

\begin{lemma}\label{lem:GGG.double}
  Fix two distinct vertices $u,v\in[n]$. Make a bipartite graph $\GGG_0$ with weighted edges
  on two vertex classes $\Gg$ and $\Gg_{uv}$ by forming edges as follows:
  \begin{itemize}
    \item If $A\in\Gg$ has $A_{uv}=0$, then form an edge of weight~$1$ 
      between $A$ and every element of $\Gg_{uv}$ that is the result of applying
      a valid switching of the form $(u,v,\cdot,\cdot,\cdot,\cdot)$.
    \item If $A\in\Gg$ has $A_{uv}=1$, then form an edge of weight~$d^3(n-d-1)$ between $A$ and its
      identical copy in $\Gg_{uv}$.
  \end{itemize}
  In $\GGG_0$, every element of $\Gg$ has degree between $d^3(n-2d-2)$ and $d^3(n-d-1)$,
  and every element of $\Gg_{uv}$ has degree between $d^2(n-d-1)(n-d-2)$ and $d^2(n-d-1)(n-1)$.
  Furthermore, $\GGG_0$ can be embedded in a biregular
  bipartite graph $\GGG$ on the same vertex sets, with vertices in $\Gg$ having degree $d^3(n-d-1)$
  and in $\Gg_{uv}$ having degree $d^2(n-d-1)(n-1)$.
\end{lemma}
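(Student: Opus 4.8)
The plan is to verify the claimed degree bounds in $\GGG_0$ directly from Lemma~\ref{lem:double.switching.counts}, and then to pad $\GGG_0$ into a biregular graph $\GGG$ by adding edges carefully. First I would compute the degrees on the $\Gg$ side. Fix $A\in\Gg$. If $A_{uv}=1$, the unique edge incident to $A$ has weight $d^3(n-d-1)$, so $\deg_{\GGG_0}(A)=d^3(n-d-1)$, which lies in the stated interval. If $A_{uv}=0$, then by construction the edges incident to $A$ correspond bijectively to valid switchings of the form $(u,v,\cdot,\cdot,\cdot,\cdot)$ applied to $A$, each of weight~$1$; there are exactly $s_{uv}(A)$ of these, and \eqref{eq:double.switching.nonneighbors} gives $d^3(n-2d-2)\le s_{uv}(A)\le d^3(n-d-1)$, as claimed. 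One small point to note here is that distinct valid switchings applied to $A$ yield distinct elements of $\Gg_{uv}$ (so there is no collapsing of parallel edges into one): the switching $(u,v,v_3,v_4,v_5,v_6)$ is recoverable from $A$ together with the resulting matrix $A'$, since $v_3$ is the $\neighbors[A]{v}$-neighbor whose edge to $v$ is removed, $v_6$ is the $\neighbors[A]{u}$-neighbor whose edge to $u$ is removed, and then $v_4,v_5$ are determined as the endpoints of the remaining removed edge together with the adjacency pattern; I would include a sentence making this explicit.

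Next I would compute the degrees on the $\Gg_{uv}$ side. Fix $B\in\Gg_{uv}$, so $B_{uv}=1$. The edges incident to $B$ in $\GGG_0$ are of two types: the single weight-$d^3(n-d-1)$ edge to the identical copy of $B$ in $\Gg$, and weight-$1$ edges to matrices $A\in\Gg$ with $A_{uv}=0$ from which a valid switching of the form $(u,v,\cdot,\cdot,\cdot,\cdot)$ produces $B$. Running the switching backwards, such an $A$ together with the switching data corresponds to a valid switching of $B$ of the form $(u,\cdot,\cdot,\cdot,\cdot,v)$ — indeed a switching $(u,v,v_3,v_4,v_5,v_6)$ taking $A$ to $B$ is the same as the switching $(v_6,v_5,v_4,v_3,v_2,v_1)$ with $(v_1,v_2)=(u,v)$ applied to $B$, which has the form $(\cdot,\cdot,\cdot,\cdot,\cdot,\cdot)$ ending in... — here I would instead argue directly that reversed switchings of $B$ that \emph{create} the edge $uv$ are precisely those of the form $(u,\cdot,\cdot,\cdot,\cdot,v)$, and these are counted by $t_{uv}(B)$, so by \eqref{eq:double.switching.neighbors}, $d^2(n-d-1)(n-2d-2)\le t_{uv}(B)\le d^2(n-d-1)^2$. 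Adding the contribution $d^3(n-d-1)$ from the identical-copy edge gives
\begin{align*}
d^2(n-d-1)(n-2d-2)+d^3(n-d-1)\ &\le\ \deg_{\GGG_0}(B)\ \le\ d^2(n-d-1)^2+d^3(n-d-1),
\end{align*}
and since $d^2(n-d-1)(n-2d-2)+d^3(n-d-1)=d^2(n-d-1)(n-d-2)$ and $d^2(n-d-1)^2+d^3(n-d-1)=d^2(n-d-1)(n-1)$, this is exactly the stated interval.

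Finally, for the embedding into a biregular $\GGG$: every vertex in $\Gg$ has $\GGG_0$-degree at most $d^3(n-d-1)$ and every vertex in $\Gg_{uv}$ has $\GGG_0$-degree at most $d^2(n-d-1)(n-1)$. A counting check confirms the target degrees are consistent: the total edge weight must be $|\Gg|\cdot d^3(n-d-1)=|\Gg_{uv}|\cdot d^2(n-d-1)(n-1)$, which reduces to $|\Gg_{uv}|/|\Gg|=d/(n-1)$, the known probability that a fixed edge $uv$ appears (from the Notations subsection). So it suffices to add further weighted edges between $\Gg$ and $\Gg_{uv}$ so that every vertex in $\Gg$ gains weight $d^3(n-d-1)-\deg_{\GGG_0}(\cdot)\ge 0$ and every vertex in $\Gg_{uv}$ gains weight $d^2(n-d-1)(n-1)-\deg_{\GGG_0}(\cdot)\ge 0$, with the two lists of required weight-deficits summing to the same total. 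This is a bipartite degree-realization (transportation) problem: given two nonnegative ``demand'' vectors on $\Gg$ and $\Gg_{uv}$ with equal total, one can always realize them by a nonnegative weighted bipartite graph (e.g. greedily, or via the max-flow/Gale--Ryser argument, allowing arbitrary nonnegative real weights since we are not constrained to simple graphs). I would record this as the final step. The main obstacle is bookkeeping rather than conceptual: correctly matching the $s_{uv}$ and $t_{uv}$ counts of Lemma~\ref{lem:double.switching.counts} to degrees in $\GGG_0$ — in particular being careful that the forward and reverse switchings are genuinely in bijection and that no two switchings produce the same edge of $\GGG_0$ — after which both the degree bounds and the padding step are routine.
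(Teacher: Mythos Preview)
Your approach is essentially the paper's: identify the $\GGG_0$-degrees with the switching counts $s_{uv}$ and $t_{uv}$ from Lemma~\ref{lem:double.switching.counts} (using the reverse switching $(u,v_6,v_5,v_4,v_3,v)$ on the $\Gg_{uv}$ side), then pad to biregularity using $|\Gg_{uv}|/|\Gg|=d/(n-1)$. Your transportation framing of the padding step is equivalent to, and arguably cleaner than, the paper's greedy fill-in.

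One correction, however: your claim that distinct valid switchings of $A$ yield distinct elements of $\Gg_{uv}$ is false. In the degenerate case $v_4=u$, the tuples $(u,v,v_3,u,v_5,v_6)$ and $(u,v,v_3,u,v_6,v_5)$ satisfy identical validity conditions and remove/add the same edge sets, hence produce the same graph. Fortunately this claim is unnecessary: simply read the construction as placing one weight-$1$ edge \emph{per valid switching} (allowing parallel edges, equivalently summing weights). Then $\deg_{\GGG_0}(A)=s_{uv}(A)$ holds by definition, and on the other side the weight-$1$ contribution to $\deg_{\GGG_0}(B)$ is exactly $t_{uv}(B)$, since the map $(u,v,v_3,v_4,v_5,v_6)\mapsto (u,v_6,v_5,v_4,v_3,v)$ is a bijection between forward switchings of $A$ landing at $B$ and reverse switchings of $B$ of the form $(u,\cdot,\cdot,\cdot,\cdot,v)$. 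The paper's proof implicitly takes this reading and does not attempt an injectivity argument.
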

\begin{proof}
  We start with the claims about $\GGG_0$.
  For any $A\in\Gg$ with $A_{uv}=0$, the bound $s_{uv}(A)\leq d^3(n-d-1)$
  from Lemma~\ref{lem:double.switching.counts} shows that the degree  of
  $A$ in $\GGG_0$ is between $d^3(n-2d-2)$ and $d^3(n-d-1)$. 
  If $A_{uv}=1$, then $A$ has exactly one incident edge of weight~$d^3(n-d-1)$ in $\GGG_0$.

  If $A'$ is the result of applying a switching $(u,v,w_1,w_2,w_3,w_4)$ to $A$, then
  $A$ is the result of applying a switching $(u,w_4,w_3,w_2,v)$ to $A'$.
  Thus $A'\in\Gg_{uv}$ has $t_{uv}(A')$ incident edges of weight~$1$, as well as one extra edge
  of weight~$d^3(n-d-1)$ to its identical copy in $\Gg$. The bounds on the degree of $A'$ then
  follow from the bounds on $t_{uv}$ in Lemma~\ref{lem:double.switching.counts}.
  This proves all the claims about $\GGG_0$.
  
   To form $\GGG$, we start with $\GGG_0$ and add edges as follows.
   Go through the vertices of $\Gg$, and for each vertex with degree less than $d^3(n-d-1)$, arbitrarily
   make edges from the vertex to vertices in $\Gg_{uv}$ of weight less than $d^2(n-d-1)(n-1)$. Continue
   this procedure until either all vertices in $\Gg$ have degree~$d^3(n-d-1)$ or all vertices in $\Gg_{uv}$
   have degree~$d^2(n-d-1)(n-1)$. We claim that in fact, both are true when the procedure is done.
   Since the probability of a random regular graph containing edge $uv$ is $d/(n-1)$, it holds that
   $\abs{\Gg_{uv}}/\abs{\Gg}=d/(n-1)$.   
   We can count the total edge weight in the graph when the procedure has terminated
   by summing the degrees of all
   vertices in $\Gg$, or by summing the degrees of all vertices in $\Gg_{uv}$. If
   all degrees in $\Gg$ are $d^3(n-d-1)$ and all degrees in $\Gg_{uv}$ are at most $d^2(n-d-1)(n-1)$, then 
   \begin{align*}
     \abs{\Gg}d^3(n-d-1) \leq \abs{\Gg_{uv}}d^2(n-d-1)(n-1)=\abs{\Gg}d^3(n-1),
   \end{align*}
   and so all vertices in $\Gg_{uv}$ must have degree exactly $d^2(n-d-1)(n-1)$.
   In the same way, if all degrees in $\Gg_{uv}$ are $d^2(n-d-1)(n-1)$, 
   then all degrees in $\Gg$ must be exactly $d^3(n-d-1)$. 
   Thus we have embedded $\GGG_0$ in a biregular bipartite graph $\GGG$ as desired.
\end{proof}

This lemma together with Lemma~\ref{lem:bip.to.coupling} yields a coupling of $\big(A,A^{(uv)}\big)$
with
\begin{align}
\P\bigl[\text{$A$ and $A^{(uv)}$ are identical or differ by a switching}\mid A^{(uv)}\bigr] 
&\geq 1-\frac{d+1}{n-1}, \label{eq:double.uv.upcoupling}
\\\intertext{and}
\P\bigl[\text{$A$ and $A^{(uv)}$ are identical or differ by a switching}\mid A\bigr] 
&\geq 1 - \frac{d+1}{n-d-1} \label{eq:double.uv.downcoupling}
\end{align}
which can be used to construct size biased couplings for linear sums of $A$ bounded both for the upper
and lower tail. 
This immediately gives tail bounds for any statistic $f(A)=\sum_{u\neq v}a_{uv}A_{uv}$
with $0\leq a_{uv}\leq c$, since by choosing $(U,V)$ with $\P[(U,V)=(u,v)]$ in proportion to 
$a_{uv}$, we obtain a size biased coupling $\bigl(f(A),f\bigl(A^{(UV)}\bigr)\bigr)$
by Lemma~\ref{lem:sizebias.sum}.
For the full details, see Section~\ref{sec:uniform.model.concentration}, where we carry this out.

\section{Concentration for random regular graphs}	\label{sec:concentration}

In this section, we prove Proposition~\ref{prop:general.utp}, establishing
the uniform tails property for all the models of random regular graphs we consider.
We also prove a concentration result for the edge count $e_A(\Aset,\Bset)$ in the uniform model in 
Theorem~\ref{thm:rrg.discrepancy}.
Results like this bounding the \emph{edge discrepancy} for random regular graphs
have often been of interest; see the expander mixing lemma \cite[Lemma~2.5]{HLW} and
\cite[Lemma~4.1]{KSVW}, for example.

\subsection{Concentration for the permutation model}\label{sec:perm.model.concentration}
Recall that in our permutation models, an adjacency matrix $A$ is 
given as the symmetrized sum of $d/2$ independent random permutation matrices, for some even $d$.
A more graph theoretic description of the model is as follows.
Let $\pi_1,\ldots,\pi_{d/2}$ be independent random permutations of ~$[n]$.
Then $A$ is the adjacency matrix of the graph formed by making an edge between $i$ and $j$ for every
$(i,j,l)$ such that $\pi_l(i)=j$.
Equivalently,
\begin{align} \label{def:Aij.perm}
  A_{ij}=\sum_{l=1}^{d/2} \big( \1_{\{\pi_l(i)=j\}} + \1_{\{\pi_l(j)=i\}} \big)
\end{align}
for $i,j\in[n]$. Note that the graph allows for loops and parallel edges, and that a loop
contributes to the adjacency matrix twice.
We now show that when the distribution of the permutations is uniform over the symmetric group or is
constant on conjugacy classes with no fixed points, the matrix $A$ has the uniform tails property, which we recall from Definition~\ref{def:utp}. Proposition~\ref{prop:ontails} then implies that the second eigenvalue of $A$
is $O(\sqrt{d})$ with probability tending to $1$. For uniform permutations, this result was previously shown in \cite[Theorem~24]{DJPP}, and it is included here to highlight that our concentration proofs by size biasing are simpler than previous
martingale-based proofs such as \cite[Theorem~26]{DJPP}.

\begin{proof}[Proof of Proposition~\ref{prop:general.utp}, parts a) and b)]
Fix a symmetric matrix $Q$ and $a$ as in Definition~\ref{def:utp}, and let $\pi_1,\ldots,\pi_{d/2}$ be the random permutations defining $A$. By the symmetry of $Q$ and $A$, 
we can view $f_Q(A)$ as
\begin{align}\label{eq:desymmetrize}
f_Q(A) &= 2\sum_{u,v=1}^n \sum_{l=1}^{d/2}Q_{uv}\1_{\{\pi_l(u)=v\}}.
\end{align}

First we consider the case where the common permutation distribution is uniform. We show how to couple $\pi_l$ with a random permutation $\pi_l^{(uv)}$
distributed as $\pi_l$ conditional on $\pi_l(u)=v$.
Let $\tau$ be the transposition swapping $\pi_l(u)$ and $v$ (or the identity if
$\pi_l(u)=v$), and define $\pi_l^{(uv)}=\tau\circ\pi_l$.
It is straightforward to check that $\pi^{(uv)}_l$ is distributed as a uniformly random permutation conditioned to map $u$ to $v$.

  Choose $(U,V)$ from $[n]\times [n]$ with $\P[(U,V)=(u,v)]$ proportional to $Q_{uv}$, and choose $L$ uniformly from $\{1,\ldots,d/2\}$,  independently of each other and of $A$. Define $A'$ as we defined $A$, but with
  $\pi_L^{(UV)}$ substituting for $\pi_L$. This gives us a size biased coupling
  $\bigl(f_Q(A),f_Q(A')\bigr)$ by Lemma~\ref{lem:sizebias.sum}. Let $U' = \pi_L^{-1}(V)$ and $V'=\pi_L(U)$. 
  Applying \eqref{eq:desymmetrize}, we then have
  \begin{align*}
    f_Q(A') - f_Q(A) = 2(Q_{UV} + Q_{U'V'} - Q_{UV'}-Q_{U'V})\leq 2(Q_{UV}+Q_{U'V'}).
  \end{align*}
  This shows that $f_Q(A') - f_Q(A)\leq 4a$. With $D=\bigl(f_Q(A')-f_Q(A)\bigr)^+$ and ${\mathcal F}=\{\pi_1,\ldots,\pi_{d/2}\}$ we have
  \begin{align}
    \E [D\mid {\mathcal F}] &\leq 2\E[Q_{UV}+Q_{U'V'}\mid {\mathcal F}] \nonumber\\
    &= \frac{2}{\sum_{u,v=1}^n Q_{uv}}\sum_{u,v=1}^n Q_{uv}\Biggl(Q_{uv} + 
         \frac{2}{d}\sum_{l=1}^{d/2} Q_{\pi_l^{-1}(v)\pi_l(u)}\Biggr) \nonumber\\
    &= \frac{2d}{n \mu}\Biggl(\sum_{u,v=1}^n Q_{uv}^2 + \frac{2}{d}\sum_{l=1}^{d/2}\sum_{u,v=1}^n
         Q_{uv}Q_{\pi_l^{-1}(v)\pi_l(u)} \Biggr). \label{eq:eda_bound}
  \end{align}
  Applying the Cauchy-Schwarz inequality,
  \begin{align*}
    \sum_{u,v=1}^n
         Q_{uv}Q_{\pi_l^{-1}(v)\pi_l(u)} &\leq \Biggl( \sum_{u,v=1}^nQ_{uv}^2\Biggr)^{1/2}
         \Biggl(\sum_{u,v=1}^n Q_{\pi_l^{-1}(v)\pi_l(u)}^2\Biggr)^{1/2}\\
         &=\Biggl( \sum_{u,v=1}^nQ_{uv}^2\Biggr)^{1/2}
         \Biggl(\sum_{u,v=1}^n Q_{uv}^2\Biggr)^{1/2} = \sum_{u,v=1}^n Q_{uv}^2.
  \end{align*}
  Substitution into \eqref{eq:eda_bound} yields
  \begin{align*}
    \E [D\mid {\mathcal F}] &\leq \frac{4d}{n\mu}\sum_{u,v=1}^n Q_{uv}^2=\frac{4\sm^2}{\mu}.
  \end{align*}
As $A$ is ${\mathcal F}$-measurable, the same bound holds for $\E [D\,\vert\, A]$. Now apply Theorem~\ref{thm:sizebias_concentration_bennett} 
  with  $\tau^2=4 \sm^2, c=4a$ and $p=1$
  to complete the proof for the uniform permutation case.

  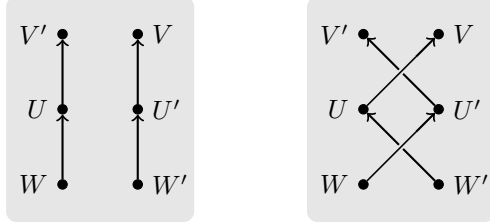
\begin{figure}
    \begin{center}
      \begin{tikzpicture}
        \begin{scope}
        \fill[black!10, rounded corners] (-0.75,-0.5) rectangle (1.75,2.5);
        \path (0,2) node[vert,label=left:$V'$] (V') {}
              (1,2) node[vert,label=right:$V$] (V) {}
              (0,1) node[vert,label=left:$U$] (U) {}
              (1,1) node[vert,label=right:$U'$] (U') {}
              (0,0) node[vert,label=left:$W$] (W) {}
              (1,0) node[vert,label=right:$W'$] (W') {};
        \draw[thick, ->] (W)--(U);
        \draw[thick, ->] (U)--(V'); 
        \draw[thick, ->] (W')--(U');
        \draw[thick, ->] (U')--(V); 
        \end{scope}
        \begin{scope}[shift={(4,0)}]
        \fill[black!10, rounded corners] (-0.75,-0.5) rectangle (1.75,2.5);
        \path (0,2) node[vert,label=left:$V'$] (V') {}
              (1,2) node[vert,label=right:$V$] (V) {}
              (0,1) node[vert,label=left:$U$] (U) {}
              (1,1) node[vert,label=right:$U'$] (U') {}
              (0,0) node[vert,label=left:$W$] (W) {}
              (1,0) node[vert,label=right:$W'$] (W') {};
        \draw[thick, ->] (U')--(V'); 
        \draw[thick, ->] (W')--(U);
        \draw[thick, ->] (U)--(V); 
        \draw[thick, ->] (W)--(U');
        \draw[draw=black!10, double=black,thick] (W)--(0.7,0.7);
        \draw[draw=black!10, double=black,thick] (U)--(0.7,1.7);
        \end{scope}                
      \end{tikzpicture}
    \end{center}
    \caption{On the left is $\pi_L$ and on the right $\pi_L^{UV}$, assuming that vertices $W$, $U$,
    $V'$, $W'$, $U'$, and $V$ are distinct.}\label{fig:permutations}
  \end{figure}  
  Next, let $\pi_l,\,l=1,\ldots,d/2$ be independent random permutations with distributions constant on conjugacy class and having no fixed points.
Lack of fixed points implies that the matrix $A$ has zeros all along its diagonal, and we may therefore assume without loss of generality that $Q_{uu}=0$.
By \cite[Sec. 6.1.2]{CGS} we have 
\begin{align*}
\P[\pi(u)=v]=\frac{1}{n-1} \quad \mbox{for all $u \not =v$, hence} \quad \mu:=\E f_Q(A)= \frac{d}{n-1}\sum_{u, v=1}^n Q_{uv}.
\end{align*}

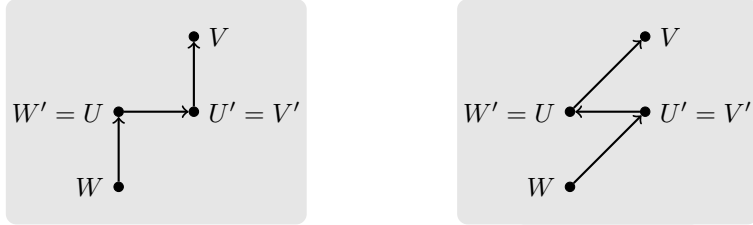
\begin{figure}
  \begin{center}
    \begin{tikzpicture}
        \begin{scope}
        \fill[black!10, rounded corners] (-1.5,-0.5) rectangle (2.5,2.5);
        \path (1,2) node[vert,label=right:$V$] (V) {}
        (0,1) node[vert,label={left:{$W'=U$}}] (U) {}
              (1,1) node[vert,label={right:{$U'=V'$}}] (U') {}
              (0,0) node[vert,label=left:$W$] (W) {};
        \draw[thick, ->] (W)--(U);
        \draw[thick, ->] (U)--(U'); 
        \draw[thick, ->] (U')--(V); 
        \end{scope}
        \begin{scope}[shift={(6,0)}]
        \fill[black!10, rounded corners] (-0.75,-0.5) rectangle (1.75,2.5);
        \fill[black!10, rounded corners] (-1.5,-0.5) rectangle (2.5,2.5);
        \path (1,2) node[vert,label=right:$V$] (V) {}
              (0,1) node[vert,label={left:{$W'=U$}}] (U) {}
              (1,1) node[vert,label={right:{$U'=V'$}}] (U') {}
              (0,0) node[vert,label=left:$W$] (W) {};
        \draw[thick, ->] (W)--(U');
        \draw[thick, ->] (U')--(U); 
        \draw[thick, ->] (U)--(V); 
        \end{scope}                
      \end{tikzpicture}
    \end{center}
    \caption{On the left, $\pi_L$, and on the right, $\pi_L^{UV}$, in the case where $V'=U'$ and $U=W'$.}
  \label{fig:ugly.case}
\end{figure}

Sample $(U,V)$ and $L$ as in the uniform case, noting here that since $Q_{uu}=0$ for all $u$ we have $U \not = V$ a.s. With $\tau$ the identity if $\pi_L(U)=V$ and otherwise the transposition $(U,\pi^{-1}_L(V))$, 
one may check that the permutation $\pi_L^{UV}=\tau \circ \pi \circ \tau$ has the distribution of $\pi_L$ conditional on $\pi_L(U)=V$, and that therefore, $f_Q(A')$ has the size biased distribution of $f_Q(A)$, where $A'$ is defined as $A$, but with $\pi_L^{UV}$ replacing $\pi_L$. 
 Let $U'=\pi_L^{-1}(V), V'=\pi_L(U)$, and $W=\pi_L^{-1}(U), W'=\pi_L^{-2}(V)$.
See Figure~\ref{fig:permutations} for depictions of these vertices in $\pi_L$ and $\pi_L^{UV}$.

There are two cases we need to consider. In the first case, $V'=U'$, which forces $U=W'$ and puts
us in the situation shown in Figure~\ref{fig:ugly.case}. Consulting the figure and applying 
\eqref{eq:desymmetrize},
\begin{align*}
  f_Q(A')-f_Q(A)&=  2(Q_{WU'}+Q_{U'U} + Q_{UV} - Q_{WU} - Q_{UU'} - Q_{U'V})\\
    &= 2(Q_{WU'} + Q_{UV} - Q_{WU}  - Q_{U'V})\\
    &\leq 2(Q_{WU'} + Q_{UV}) \leq 2(Q_{W'U} + Q_{WU'} + Q_{UV} + Q_{U'V'}).
\end{align*}

In the other case, we claim that $\{V,V',W,W'\}\cap \{U,U'\}=\varnothing$.
Indeed, since $\pi_L$ has no fixed points, $V'\neq U$, $W\neq U$, $W'\neq U'$, and $V\neq U'$.
Since we are not in the first case, $V'\neq U'$ and $W'\neq U$. From the way we selected them, $V\neq U$.
Since $W=\pi_L^{-1}(U)$ and $U'=\pi_L^{-1}(V)$, we have $W\neq U'$. This confirms the claim.
Since $\tau$ swaps $U$ and $U'$ (or does nothing if $U=U'$), it leaves $V$, $V'$, $W$, and $W'$ fixed,
giving
\begin{align*}
  \pi_L^{UV}(W') &= U, & \pi_L^{UV}(W)&=U',\\
  \pi_L^{UV}(U) &= V, & \pi_L^{UV}(U')&=V'.
\end{align*}
Thus the only positive terms on the right hand side of
\begin{align*}
  f_Q(A') - f_Q(A) &= 2\sum_{u,v=1}^n\sum_{l=1}^{d/2}Q_{uv}\bigl(\1_{\{\pi_l^{UV}(u)=v\}}-\1_{\{\pi_l(u)=v\}}\bigr)
\end{align*}
occur when  $(u,v) \in \bigl\{ (W',U),\,(W,U'),\,(U,V),\,(U',V') \bigr\}$.
We therefore have
\begin{align*}
  f_Q(A)-f_Q(A') &\leq 2(Q_{W'U} + Q_{WU'} + Q_{UV} + Q_{U'V'}).
\end{align*}
In both cases, then, we have
\begin{align*}
f_Q(A')-f_Q(A) \le 2(Q_{W'U} + Q_{WU'} + Q_{UV} + Q_{U'V'})\leq 8a,
\end{align*}
and following the same argument as for uniform permutations yields
\begin{align*}
\E [D\mid A] \le \frac{8}{\sum_{u,v}Q_{uv}}\sum_{u,v}Q_{u,v}^2 = \frac{8\sm^2}{\mu}.
\end{align*}
The proof is completed by applying Theorem~\ref{thm:sizebias_concentration_bennett} 
with  $\tau^2=8 \sm^2, c=8a$ and $p=1$.
\end{proof}

\subsection{Uniform tails property for the uniform model}\label{sec:uniform.model.concentration}
Our proof of the uniform tails property for the model where a graph is chosen uniformly from all 
random $d$-regular simple graph on $n$ vertices
will be similar to the proof for
the permutation model in the previous section.
The main difference is that here our size biased coupling will take more work to construct
and will not be bounded with probability~$1$.
We note that when $A$ is the adjacency matrix of a uniform random regular graph,
$A_{uu}=0$ for $u\in[n]$.

\begin{thm}\label{thm:light.couples.positive}
  Let $A$ be the adjacency matrix of a uniform random simple $d$-regular graph on $n$ vertices.
  Let $Q$ be an $n\times n$ symmetric matrix with all
  entries in $[0,a]$, and let $f_Q(A)=\sum_{u, v} Q_{uv}A_{uv}$.
  Let $\mu=\E f_Q(A)=\frac{d}{n-1}\sum_{u\neq v} Q_{uv}$ and let
  $\sm^2=\frac{d}{n-1}\sum_{u\neq v}Q_{uv}^2$. Then, with $h$ as given in \eqref{def:function.h}, for all $t\geq 0$,
  \begin{align}
    \P\biggl[ f_Q(A) - \frac{\mu}{p} \geq t \biggr] &\leq \exp\biggl(-\frac{\sm^2}{6pa^2}
      h\biggl( \frac{pat}{\sm^2}\biggr)\biggr)
      \leq \exp\biggl(\frac{t^2}{12a(t/3 + \sm^2/ap)}\biggr)  \label{eq:abs.upper.tail}\\ 
    \intertext{with $p=1 - (d+1)/(n-1)$, and}
    \P\bigl[ f_Q(A) - p'\mu\leq -t\bigr] &\leq \exp\biggl(-\frac{\sm^2}{6a^2}h\biggl(\frac{at}{\sm^2}\biggr)
      \biggr) \leq \exp\biggl(-\frac{t^2}{12a(t/3 + \sm^2/a)}\biggr)
    \label{eq:abs.lower.tail}
  \end{align}
  with $p' = 1 - (d+1)/(n-d-1)$. 
\end{thm}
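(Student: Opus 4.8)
The plan is to realize $f_Q(A)$ as the top of a size biased coupling built from the switchings of Section~\ref{sec:graph.couplings} and feed it into Theorem~\ref{thm:sizebias_concentration_bennett}. Assume $Q\not\equiv 0$ (otherwise everything is trivial), and, since $A_{uu}=0$, that $Q_{uu}=0$ for all $u$; put $S=\sum_{u\neq v}Q_{uv}$, so $\mu=\frac{d}{n-1}S$ and $\sm^2/\mu=\bigl(\sum_{u\neq v}Q_{uv}^2\bigr)/S$. Pick a pair $(U,V)$ independent of $A$ with $\P[(U,V)=(u,v)]=Q_{uv}/S$, and given $(U,V)=(u,v)$ let $A'=A^{(uv)}$ be obtained from $A$ by the walk in the biregular bipartite graph $\GGG=\GGG^{(u,v)}$ of Lemmas~\ref{lem:bip.to.coupling} and~\ref{lem:GGG.double}; writing $f_Q(A)$ as a sum over $\{(u,v):Q_{uv}>0\}$ of the products $Q_{uv}A_{uv}$, Lemma~\ref{lem:sizebias.sum} shows that $X:=f_Q(A)$ and $X^s:=f_Q(A')$ form a size biased coupling (this is the recipe described at the end of Section~\ref{sec:graph.couplings}). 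Let $\Bounded$ be the event that $A$ and $A'$ are identical or differ by a single switching, and $\event\subseteq\Bounded$ the subevent that they differ by a switching, so that $A'=A$ on $\Bounded\setminus\event$. Averaging \eqref{eq:double.uv.upcoupling} and \eqref{eq:double.uv.downcoupling} over $(U,V)$ and passing to the $\sigma$-fields of $X^s$ and of $X$ gives $\P[\Bounded\mid X^s]\geq p:=1-\frac{d+1}{n-1}$ and $\P[\Bounded\mid X]\geq p':=1-\frac{d+1}{n-d-1}$.

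The heart of the matter is the increment bound. On $\event$, write $(U,V,V_3,V_4,V_5,V_6)$ for the valid switching taking $A$ to $A'$; the only entries whose $Q$-weights change are those of the added edges $UV,\ V_3V_4,\ V_5V_6$ and the deleted edges $VV_3,\ V_4V_5,\ V_6U$, each counted twice, so
\[
  f_Q(A')-f_Q(A)=2\bigl(Q_{UV}+Q_{V_3V_4}+Q_{V_5V_6}\bigr)-2\bigl(Q_{VV_3}+Q_{V_4V_5}+Q_{V_6U}\bigr)\le 6a .
\]
Thus $X^s-X\le 6a$ on $\Bounded$, so we take $c=6a$; and with $D=(X^s-X)^+$ we get $D\1_{\Bounded}=D\1_{\event}\le 2\bigl(Q_{UV}+Q_{V_3V_4}+Q_{V_5V_6}\bigr)\1_{\event}$, whence $\E[D\1_{\Bounded}\mid A]\le 2\bigl(\E[Q_{UV}\mid A]+\E[Q_{V_3V_4}\1_{\event}\mid A]+\E[Q_{V_5V_6}\1_{\event}\mid A]\bigr)$. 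The first term equals $\sm^2/\mu$ since $(U,V)$ is independent of $A$. For the second, conditionally on $A$ and $(U,V)=(u,v)$ the walk in $\GGG$ reaches the graph produced by each valid switching $(u,v,\cdot,\cdot,\cdot,\cdot)$ with probability $1/\bigl(d^3(n-d-1)\bigr)$ (such edges have weight $1$ and $A$ has $\GGG$-degree $d^3(n-d-1)$), so $\E[Q_{V_3V_4}\1_{\event}\mid A]=\bigl(Sd^3(n-d-1)\bigr)^{-1}\sum_{\mathrm{sw}}Q_{v_1v_2}Q_{v_3v_4}$, the sum over all valid switchings $(v_1,\dots,v_6)$ of $A$. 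By the invariance of the switching operation under cyclic shift of $(v_1,\dots,v_6)$ by two positions, the number of valid switchings of $A$ with any one of the three non-edge pairs $(v_1,v_2)$, $(v_3,v_4)$, $(v_5,v_6)$ prescribed to a value $(a,b)$ is the same, namely $s_{ab}(A)$, which is $\le d^3(n-d-1)$ by \eqref{eq:double.switching.nonneighbors}; Cauchy--Schwarz over the set of switchings then gives
\[
  \sum_{\mathrm{sw}}Q_{v_1v_2}Q_{v_3v_4}\ \le\ \sum_{a\neq b}Q_{ab}^2\,s_{ab}(A)\ \le\ d^3(n-d-1)\sum_{a\neq b}Q_{ab}^2 ,
\]
so $\E[Q_{V_3V_4}\1_{\event}\mid A]\le \sm^2/\mu$, and the same argument (shifting once more to the pair $(v_5,v_6)$) gives $\E[Q_{V_5V_6}\1_{\event}\mid A]\le\sm^2/\mu$. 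Hence $\E[D\1_{\Bounded}\mid A]\le 6\sm^2/\mu$, so a fortiori $\E[D\1_{\Bounded}\mid X]\le\tau^2/\mu$ with $\tau^2:=6\sm^2$.

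It remains to invoke Theorem~\ref{thm:sizebias_concentration_bennett} with $c=6a$, $\tau^2=6\sm^2$ and $\Bounded$ as above. Part a), whose hypotheses $\P[\Bounded\mid X^s]\ge p$ and $\E[D\1_{\Bounded}\mid X]\le\tau^2/\mu$ we have verified, gives for all $t\ge 0$
\[
  \P\bigl[f_Q(A)-\mu/p\ge t\bigr]\le \exp\Bigl(-\frac{\tau^2}{pc^2}\,h\bigl(\tfrac{pct}{\tau^2}\bigr)\Bigr)=\exp\Bigl(-\frac{\sm^2}{6pa^2}\,h\bigl(\tfrac{pat}{\sm^2}\bigr)\Bigr),
\]
which is the first bound in \eqref{eq:abs.upper.tail}; part b), using $\P[\Bounded\mid X]\ge p'$, gives for $0\le t\le p'\mu$ the bound $\P[f_Q(A)-p'\mu\le -t]\le \exp\bigl(-\frac{\tau^2}{c^2}h(\tfrac{ct}{\tau^2})\bigr)=\exp\bigl(-\frac{\sm^2}{6a^2}h(\tfrac{at}{\sm^2})\bigr)$, the first bound in \eqref{eq:abs.lower.tail} (for $t>p'\mu$ the left side vanishes as $f_Q(A)\ge 0$). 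The second inequalities in \eqref{eq:abs.upper.tail}--\eqref{eq:abs.lower.tail} follow from $h(x)\ge\frac{x^2}{2(1+x/3)}$ in \eqref{ineq:h(u)}.

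The only real obstacle is the increment bound $\E[D\1_{\Bounded}\mid A]\le 6\sm^2/\mu$. Unlike the permutation models treated earlier, here the coupling is not almost surely bounded, and the ``new'' index pairs $(V_3,V_4)$ and $(V_5,V_6)$ are produced by a \emph{random} switching rather than a fixed bijection of $(U,V)$, so one cannot merely substitute and apply Cauchy--Schwarz to a sum of the shape $\sum Q_{uv}Q_{\phi(u,v)}$. The fix is to recognize the conditional expectation as a single sum over the valid switchings of $A$ and run Cauchy--Schwarz there; this works precisely because the double-switching counts of Lemma~\ref{lem:double.switching.counts} are balanced across the three non-edge pairs of a switching (each at most $d^3(n-d-1)$), which is also the source of the constant $\tfrac16$. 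The remaining issues are routine: verifying that both hypotheses of Theorem~\ref{thm:sizebias_concentration_bennett} hold (one conditions on $X^s$, the other on $X$), handling the null events in \eqref{eq:double.uv.upcoupling}--\eqref{eq:double.uv.downcoupling}, and noting that in the degenerate range $d\ge (n-2)/2$ one has $p'\le 0$ and \eqref{eq:abs.lower.tail} is vacuous.
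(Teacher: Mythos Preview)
Your proof is correct and follows essentially the same approach as the paper: construct the size biased coupling via the switching walk in $\GGG$, take $\Bounded$ to be the event that the walk stays in $\GGG_0$, verify $X^s-X\le 6a$ on $\Bounded$ and $\E[D\1_\Bounded\mid A]\le 6\sm^2/\mu$, and apply Theorem~\ref{thm:sizebias_concentration_bennett}. The only noteworthy difference is in how the Cauchy--Schwarz step is carried out: the paper enlarges $\Snbrs(A,v_1,v_2)$ to the superset $\overline{\Snbrs}(A,v_1,v_2)$ of \emph{exact} cardinality $d^3(n-d-1)$ and counts tuples there, whereas you work directly over valid switchings and exploit the cyclic-shift symmetry $(v_1,\dots,v_6)\mapsto(v_3,\dots,v_6,v_1,v_2)$ to show that the number of valid switchings with any prescribed non-edge pair $(v_{2k-1},v_{2k})=(a,b)$ equals $s_{ab}(A)\le d^3(n-d-1)$; both routes yield the identical bound.
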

\begin{proof}
  We now construct a size biased coupling using the tools we developed in Section~\ref{sec:graph.couplings}.
  Let $A^{(v_1v_2)}$ be the matrix obtained by walking randomly in the bipartite graph
  $\GGG$, constructed in Lemma~\ref{lem:GGG.double}, from $A$ along an edge chosen with
  with probability proportional to its weight. By Lemma~\ref{lem:bip.to.coupling}, the matrix
  $A^{(v_1v_2)}$ is distributed as $A$ conditioned on $A_{v_1v_2}=1$. 
  Independently of $A$, choose $(V_1,V_2)=(v_1,v_2)$ with probability proportional to $Q_{v_1v_2}$ 
  for all $v_1\neq v_2$, and set $A' = A^{(V_1V_2)}$. 
  By Lemma~\ref{lem:sizebias.sum}, the pair $\bigl(f_Q(A), f_Q(A')\bigr)$ is a 
  size biased coupling. Define $\Bounded$ as the event that the edge traversed in $\GGG$ from
  $A$ to $A^{(V_1V_2)}$ belongs to $\GGG_0$. 
  By \eqref{eq:double.uv.upcoupling} and \eqref{eq:double.uv.downcoupling},
  $\P[\Bounded\mid A']\geq p$, and
  $P[\Bounded\mid A]\geq p'$.
  
  Let $\Snbrs(A,v_1,v_2)$ 
  consist of all tuples $(v_3,\ldots,v_6)$ such that
  $(v_1,\ldots,v_6)$ is a valid switching. 
  Note that if $A_{v_1v_2}=1$, then $\Snbrs(A,v_1,v_2)$ is the empty set.
  For $(v_3,\ldots,v_6) \in \Snbrs(A,v_1,v_2)$, 
  let $A(v_1,\ldots,v_6)$ denote $A$ after application of the
  switching $(v_1,\ldots,v_6)$. 
  Looking back at Lemma~\ref{lem:GGG.double}, we can describe the coupling of $A$ and $A'$ as follows.
  Conditional on $A$, $V_1$, and $V_2$ and assuming $A_{V_1V_2}=0$, 
  the matrix $A'$ takes the value $A(V_1,V_2,v_3,\ldots,v_6)$ with probability
  $1/d^3(n-d-1)$ for each $(v_3,\ldots,v_6)\in\Snbrs(A,V_1,V_2)$, and these events 
  make up the set~$\Bounded$. The matrix $A'$ can take other values as well, if $\abs{\Snbrs(A,V_1,V_2)}$
  is strictly smaller than $d^3(n-d-1)$, in which case $\Bounded$ does not hold.
  
  In view of Figure~\ref{fig:doubleswitching}, we have
  \begin{align*}
    f_Q\bigl(A(v_1,\ldots,v_6)\bigr) - f_Q(A) &= 2(Q_{v_1v_2} + Q_{v_3v_4} + Q_{v_5v_6}
       - Q_{v_2v_3} - Q_{v_4v_5} - Q_{v_6v_1}),\\
         &\leq 2(Q_{v_1v_2} + Q_{v_3v_4} + Q_{v_5v_6}),
  \end{align*}
  the factor of $2$ arising because addition or deletion of edge~$uv$ adds or removes
  both terms $Q_{uv}$ and $Q_{vu}$.  
  This shows that $f_Q(A')-f_Q(A)\leq 6a$ on the event $\Bounded$.
  
  Let $\overline{\Snbrs}(A,v_1,v_2)$ denote the set of 
  tuples $(v_3,\ldots,v_6)$ with $v_3\in\neighbors{v_2}$, $v_4\in\nonneighbors{v_3}$,
  $v_5\in\neighbors{v_4}$, and $v_6\in\neighbors{v_1}$. Recalling that 
  $\nonneighbors{v}$ is the set of $n-d-1$ vertices not equal to $v$ or the neighbors of $v$, we see that $\overline{\Snbrs}(A,v_1,v_2)$
  has size $d^3(n-d-1)$, and that it contains $\Snbrs(A,v_1,v_2)$.  
Letting $D=(f_Q(A')-f_Q(A))^+$, we have
  \begin{align*}
    \E[ D\1_\Bounded\mid A,V_1,V_2] &= \frac{1}{d^3(n-d-1)} \sum_{(v_3,\ldots,v_6)\in\Snbrs(A,V_1,V_2)}
      \bigl( f_Q(A(V_1,V_2,v_3,\ldots,v_6)) - f_Q(A) \bigr)^+\\
      &\leq \frac{2}{d^3(n-d-1)} \sum_{(v_3,\ldots,v_6)\in\overline{\Snbrs}(A,V_1,V_2)} 
        \bigl( Q_{V_1V_2} + Q_{v_3v_4} + Q_{v_5v_6} \bigr).
  \end{align*}
  Recalling the distribution of $(V_1,V_2)$ and
  observing that $\sum_{u\neq v}Q_{uv} = (n-1)\mu/d$,
  \begin{align}
    \E[ D\1_\Bounded \mid A ] &\leq \nonumber
      \sum_{v_1\neq v_2}\frac{Q_{v_1v_2}}{\sum_{u\neq v}Q_{uv}}
      \Biggl(
    \frac{2}{d^3(n-d-1)} \sum_{(v_3,\ldots,v_6)\in\overline{\Snbrs}(A,v_1,v_2)} 
        \bigl( Q_{v_1v_2} + Q_{v_3v_4} + Q_{v_5v_6} \bigr)\Biggr) \nonumber\\
      &= \frac{2}{(n-1)(n-d-1)d^2\mu}\sum_{\substack{v_1\neq v_2\\(v_3,\ldots,v_6)\in\overline{\Snbrs}(A,v_1,v_2)}}
        \Bigl( Q^2_{v_1v_2} + Q_{v_1v_2}Q_{v_3v_4} + Q_{v_1v_2}Q_{v_5v_6} \Bigr). \label{eq:three.terms}
  \end{align}
  We now consider each term of this sum. For the first one, 
  \begin{align}
    \sum_{\substack{v_1\neq v_2\\(v_3,\ldots,v_6)\in\overline{\Snbrs}(A,v_1,v_2)}} Q^2_{v_1v_2}
      &= d^3(n-d-1)\sum_{v_1\neq v_2}Q^2_{v_1v_2} = (n-1)(n-d-1)d^2\sm^2. \label{eq:v1v2.squared}
  \end{align}
  For the next term, we apply the Cauchy-Schwarz inequality in an argument similar to what we used
  in the proof of parts~a) and~b) of Proposition~\ref{prop:general.utp}:
  \begin{align*}
    \sum_{\substack{v_1\neq v_2\\(v_3,\ldots,v_6)\in\overline{\Snbrs}(A,v_1,v_2)}} Q_{v_1v_2}Q_{v_3v_4}
      &\leq \Biggl(\sum_{\substack{v_1\neq v_2\\(v_3,\ldots,v_6)\in\overline{\Snbrs}(A,v_1,v_2)}} Q_{v_1v_2}^2\Biggr)^{1/2}
         \Biggl(\sum_{\substack{v_1\neq v_2\\(v_3,\ldots,v_6)\in\overline{\Snbrs}(A,v_1,v_2)}} Q_{v_3v_4}^2\Biggr)^{1/2}.
  \end{align*}
  The first factor on the right hand side was evaluated in \eqref{eq:v1v2.squared}. 
  For the second one,
  observe that for a given $v_3\neq v_4$, there are $d^3(n-d-1)$ tuples $(v_1,v_2,v_5,v_6)$ such that
  $(v_3,\ldots,v_6)\in\overline{\Snbrs}(A,v_1,v_2)$, giving
  \begin{align*}
    \sum_{\substack{v_1\neq v_2\\(v_3,\ldots,v_6)\in\overline{\Snbrs}(A,v_1,v_2)}} Q_{v_3v_4}^2
       &= d^3(n-d-1)\sum_{v_3\neq v_4}Q_{v_3v_4}^2=(n-1)(n-d-1)d^2\sm^2.
  \end{align*}
  Thus
  \begin{align*}
    \sum_{\substack{v_1\neq v_2\\(v_3,\ldots,v_6)\in\overline{\Snbrs}(A,v_1,v_2)}} Q_{v_1v_2}Q_{v_3v_4}
      &\leq (n-1)(n-d-1)d^2\sm^2.
  \end{align*}
  The same bound holds for the final term in \eqref{eq:three.terms}. Thus we have
  \begin{align*}
    \E[ D\1_\Bounded \mid A ] &\leq    
      \frac{6\sm^2}{\mu}.
  \end{align*}
  Theorem~\ref{thm:sizebias_concentration_bennett} now
  proves \eqref{eq:abs.upper.tail} and \eqref{eq:abs.lower.tail}.
\end{proof}

Now we deduce part c) of  Proposition~\ref{prop:general.utp} from Theorem~\ref{thm:light.couples.positive}.
\begin{proof}[Proof of Proposition~\ref{prop:general.utp}, part c)]
We start with an elementary estimate:
for any $p\in [0,1]$ and $x\ge0$, 
\begin{equation}	\label{eq:h.hom}
p^{-1}h(px) \ge ph(x).
\end{equation}
Indeed,
for fixed $p\in [0,1]$, note that by concavity of $x\mapsto (1+x)^p$, 
$$1+px\ge (1+x)^p$$
for all $x\ge0$.
Taking logarithms and integrating the inequality gives
\begin{align*}
h(px) &= \int_0^x \frac{d}{dt}h(pt)dt = \int_0^x p\log(1+pt) dt \ge p^2\int_0^x\log(1+t)dt = p^2h(x)
\end{align*}
as desired.

Recall
\begin{align}	\label{c0g0.1}
c_0= \frac16\left(1-\frac{d+1}{n-1}\right) =\frac{p}{6} ,\quad\quad \gamma_0 = \frac{d+1}{n-d-2} = \frac1p-1
\end{align}
with $p$ as in Theorem~\ref{thm:light.couples.positive}.
Let $Q$ be an $n\times n$ symmetric matrix with entries in $[0,a]$, as in Definition~\ref{def:utp}.
By 
  Theorem~\ref{thm:light.couples.positive}, for all $t\geq 0$,
  \begin{align*}
    \P\bigl[f_Q(A) - \mu \geq \gamma_0\mu+t\bigr] &= \P\Bigl[f_Q(A) -\frac{\mu}{p} \geq t\Bigr]\\
      &\leq \exp\biggl(-\frac{\sm^2}{6pa^2}
      h\biggl( \frac{pat}{\sm^2}\biggr)\biggr)\leq \exp\biggl(-c_0\frac{\sm^2}{a^2}
                                                         h\biggl(\frac{at}{\sm^2}\biggr)\biggr),
  \end{align*}
  where in the last step we applied \eqref{eq:h.hom}. Similarly,
  \begin{align*}
    \P\bigl[f_Q(A) - \mu \leq -(\gamma_0\mu+t)\bigr]
      &= \P\bigl[ f_Q(A) - p'\mu \leq -\bigl((\gamma_0-1+p')\mu +t\bigr)\bigr]\\
      &\leq \P\bigl[ f_Q(A) - p'\mu \leq -t\bigr]\\
      &\leq \exp\biggl(-\frac{\sm^2}{6a^2}h\biggl(\frac{at}{\sm^2}\biggr)\biggr)
       \leq \exp\biggl(-c_0\frac{\sm^2}{a^2} h\biggl(\frac{at}{\sm^2}\biggr)\biggr)
  \end{align*}
  where in the second line we used that $1-p'= (d+1)/(n-d-1)$, which we see from \eqref{c0g0.1} is (slightly) smaller than $\gamma_0$.
\end{proof}

\begin{rmk}\label{rmk:Hcs}
Proposition~\ref{prop:general.utp} on the statistic $f_Q(A)$
can be seen as extensions of results on
$f_Q(P)$ where $P$ is a random permutation matrix. This is Hoeffding's combinatorial statistic, 
as studied in \cite{Hoeffding}.
Concentration for this statistic was achieved using exchangeable pairs by \cite{ch07}, who showed, with $\mu=\E f_Q(P)$, that
\begin{align*}
\P(|f_Q(P)- \mu| \ge t) \le 2 \exp\left( -\frac{t^2}{4\mu+2t}\right) 
\quad \mbox{for all $t \ge 0$}
\end{align*}
when $Q_{uv} \in [0,1]$. Under these same conditions, using zero biasing \cite{GI} obtained the Bennett--type inequality 
\begin{align*}
\P(|f_Q(P)- \mu| \ge t) \le 2 \exp\left( -\frac{t^2}{2\sigma^2+16t}\right) 
\quad \mbox{for all $t \ge 0$}
\end{align*}
where $\sigma^2={\rm Var}(f_Q(P))$, as well as Bennett--type bounds whose tails decay asymptotically at the faster ``Poisson" rate $\exp(-\Omega(t\log t))$, as do the bounds given in Proposition~\ref{prop:general.utp}.

In some applications, ours among them, concentration bounds that depend on the variance are preferable to those depending on the mean. In our case, however, the variance proxy $\sm^2$ in Definition~\ref{def:utp} suffices.
For the permutation model, it seems likely that the zero bias method can be applied to yield a concentration 
bound for $f_Q(A)$ depending on the true variance.
For the uniform model, it appears difficult to create a zero bias coupling for $f_Q(A)$, 
but it appears possible to construct an \emph{approximate} zero bias coupling at the 
expense of some additional complexity.
\end{rmk}

Since the edge counts
$e_A(\Aset,\Bset)$ can be expressed as $f_Q(A)=\sum_{u,v}A_{uv}Q_{uv}$ with
  \begin{align*}
    Q_{uv} &= \frac{1}{2} \bigl(\1_{\{u\in\Aset,\,v\in\Bset\}} + \1_{\{v\in\Aset,\,u\in\Bset\}}\bigr),
  \end{align*}
concentration for $e_A(\Aset,\Bset)$ follows as as a corollary of Theorem~\ref{thm:light.couples.positive}.
With a bit of extra effort, we can improve the constant in the tail bound.
Since edge discrepancy concentration is of independent interest, we make the effort and give 
the better result:

\begin{thm}\label{thm:rrg.discrepancy}
  Let $A$ be the adjacency matrix of a uniformly random $d$-regular graph on $n$ vertices, and let
    $\Aset,\Bset\subseteq[n]$. Define
    \begin{align*}
      \mu = \E e_A(\Aset,\Bset)=\frac{\bigl(\abs{\Aset}\abs{\Bset} - \abs{\Aset\cap \Bset}\bigr)d}{n-1}.
    \end{align*}
    \begin{enumerate}[a)]
      \item \label{item:rrg.discrepancy.upper}
    For any $t\geq 1$,
    \begin{align} \label{eq:rrg.discrepancy.upper}
      \P\biggl[e_A(\Aset,\Bset)\geq \frac{t\mu}{p}\biggr] 
        &\leq \exp\biggl( -\frac{\mu}{2p}h(t-1)\biggr) \leq \exp\biggl(-\frac{3\mu(t-1)^2}{4p(2+t)}\biggr)
    \end{align}
    where $p=1-(d+1)/(n-1)$.
    \item For any $0<t\leq 1$,   \label{item:rrg.discrepancy.lower}
      \begin{align} \label{eq:rrg.discrepancy.lower}
        \P\biggl[ e_A(\Aset,\Bset) \leq tp\mu\biggr] 
        &\leq \exp\biggl(-\frac{p\mu}{2}h(t-1)\biggr)
          \leq \exp\biggl(-\frac{p\mu(1-t)^2}{4}\biggr).
      \end{align}
      where $p=1-(d+1)/(n-d-1)$.
    \end{enumerate}
  \end{thm}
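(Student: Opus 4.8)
The plan is to re-use the size biased coupling from the proof of Theorem~\ref{thm:light.couples.positive}, specialized to $X=e_A(S,T)=\sum_{u\in S,\,v\in T}A_{uv}$ (the diagonal terms vanish since $A_{uu}=0$), and to replace the crude estimate $f_Q(A')-f_Q(A)\le 6a$ used there by a sharper one exploiting the combinatorial structure of an edge count, so that the plain Theorem~\ref{thm:sizebias_concentration} applies in place of its Bennett-type refinement. First I would record $\mu=\E X=\frac{(\abs{S}\abs{T}-\abs{S\cap T})\,d}{n-1}$, there being $\abs{S}\abs{T}-\abs{S\cap T}$ ordered pairs $(u,v)\in S\times T$ with $u\neq v$, each contributing $\E A_{uv}=d/(n-1)$.

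Next I would take $(V_1,V_2)$ uniform over those ordered pairs, independent of $A$, form $A'=A^{(V_1V_2)}$ by the weighted random walk on the biregular bipartite graph $\GGG$ of Lemma~\ref{lem:GGG.double} (so $A'$ is distributed as $A$ conditioned on $A_{V_1V_2}=1$, by Lemma~\ref{lem:bip.to.coupling}), and put $X^s=e_{A'}(S,T)$; then $(X,X^s)$ is a size biased coupling by Lemma~\ref{lem:sizebias.sum}. Let $\Bounded$ be the event that the traversed edge of $\GGG$ lies in $\GGG_0$, i.e.\ that $A$ and $A'$ coincide or differ by a single valid switching. Exactly as in the proof of Theorem~\ref{thm:light.couples.positive}, \eqref{eq:double.uv.upcoupling} and \eqref{eq:double.uv.downcoupling} give $\P[\Bounded\mid A']\ge p:=1-\tfrac{d+1}{n-1}$ and $\P[\Bounded\mid A]\ge p':=1-\tfrac{d+1}{n-d-1}$, hence $\P[\Bounded\mid X^s\ge x]\ge p$ and $\P[\Bounded\mid X\le x]\ge p'$ for every $x$.

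The key new ingredient is the bound $\abs{X^s-X}\le 2$ on $\Bounded$. On $\Bounded\cap\{A\neq A'\}$ one has $A'=A(v_1,\dots,v_6)$ for a valid switching with $(v_1,v_2)=(V_1,V_2)$, which adds the edges $v_1v_2,v_3v_4,v_5v_6$ and deletes $v_2v_3,v_4v_5,v_6v_1$. Writing $\sigma(w)=\1_{w\in S}$ and $\tau(w)=\1_{w\in T}$, each graph edge $\{x,y\}$ contributes $\sigma(x)\tau(y)+\tau(x)\sigma(y)\in\{0,1,2\}$ to $e_A(S,T)$, so $X^s-X$ equals the total contribution of the three added edges minus that of the three deleted ones --- naively as large as $6$. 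Collecting terms, however,
\begin{align*}
X^s-X &=\bigl[\sigma(v_1)(\tau(v_2)-\tau(v_6))+\sigma(v_3)(\tau(v_4)-\tau(v_2))+\sigma(v_5)(\tau(v_6)-\tau(v_4))\bigr]\\
&\qquad+\bigl[\text{the same expression with $\sigma$ and $\tau$ interchanged}\bigr],
\end{align*}
and each bracket is linear in three $\{0,1\}$-valued indicators with coefficients that sum to zero and are each in $\{-1,0,1\}$, so a short case check bounds each bracket by $1$ in absolute value and hence $\abs{X^s-X}\le 2$. (The vertex coincidences permitted in Definition~\ref{def:double.switching} only identify some of these indicators and cannot enlarge the bound.) I expect this to be the main obstacle: it is not deep, but one must track the effect of a double switching on $e_A(S,T)$ carefully, including the degenerate cases.

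Finally, $\abs{X^s-X}\le 2$ on $\Bounded$ makes $(X,X^s)$ a $(2,p)$-bounded size biased coupling for the upper tail and a $(2,p')$-bounded one for the lower tail in the sense of \eqref{eq:uppertail}--\eqref{eq:lowertail}. Theorem~\ref{thm:sizebias_concentration}(a) with $c=2$ gives $\P[X-\mu/p\ge x]\le\exp\bigl(-\tfrac{\mu}{2p}h(px/\mu)\bigr)$; taking $x=(t-1)\mu/p$ for $t\ge1$ yields the first inequality of \eqref{eq:rrg.discrepancy.upper}, and the second follows from $h(s)\ge\tfrac{s^2}{2(1+s/3)}$ in \eqref{ineq:h(u)}. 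Likewise Theorem~\ref{thm:sizebias_concentration}(b) with $c=2$ and $x=(1-t)p'\mu$ for $0<t\le1$ yields the first inequality of \eqref{eq:rrg.discrepancy.lower}, with the second following from $h(s)\ge s^2/2$ for $-1\le s\le 0$.
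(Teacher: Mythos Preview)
Your proposal is correct and follows essentially the same approach as the paper: construct the size biased coupling for $e_A(S,T)$ via the bipartite graph $\GGG$ of Lemma~\ref{lem:GGG.double}, show the increment is at most $2$ on the event $\Bounded$ that the edge traversed lies in $\GGG_0$, and apply Theorem~\ref{thm:sizebias_concentration} with $c=2$. The only difference is cosmetic: the paper organizes the combinatorial bound via indicators $I_i=\1_{\{v_i\in S,\,v_{i+1}\in T\}}$, $J_i=\1_{\{v_i\in T,\,v_{i+1}\in S\}}$ and the observation that $I_i=I_{i+2}=1$ forces $J_{i+1}=1$, whereas you regroup the same expression as a zero-sum linear form in $\sigma(v_1),\sigma(v_3),\sigma(v_5)$ with $\{-1,0,1\}$ coefficients---both arguments yield the bound $X^s-X\le 2$ on $\Bounded$ for the same reason.
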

  \begin{proof}
    Recall that
    \begin{align*}
      e_A(\Aset,\Bset) = \sum_{\substack{u\in \Aset\\v\in \Bset}} A_{uv}.
    \end{align*}
    Take $\GGG$ from Lemma~\ref{lem:GGG.double}, and form a coupling $\bigl(A,A^{(uv)}\bigr)$ by defining
    $A^{(uv)}$ to be the result of walking from $A$ along an edge in $\GGG$ from
    chosen with
    probability proportionate to its weight. By Lemma~\ref{lem:bip.to.coupling}, the matrix
    $A^{(uv)}$ is distributed as $A$ conditional on $A_{uv}=1$. Choosing $U$ uniformly
    from $\Aset$ and $V$ uniformly from $\Bset$, independent of each other and of $A$, and setting $A'=A^{(UV)}$,
    by Lemma~\ref{lem:sizebias.sum} we obtain a size biased coupling $\bigl(e_A(\Aset,\Bset),
    e_{A'}(\Aset,\Bset)\bigr)$.
    
    We claim that if $A$ and $A'$ differ by a switching, then 
    $e_{A'}(\Aset,\Bset) \leq e_A(\Aset,\Bset)+2$.
    Suppose the switching adds $v_1v_2$, $v_3v_4$, and $v_5v_6$
    and deletes $v_2v_3$, $v_4v_5$, and $v_6v_1$. 
    Considering indices modulo 6 and referring to Figure \ref{fig:doubleswitching}, let
    \begin{align*}
      I_{i}=\1_{\{v_i\in\Aset,\,v_{i+1}\in\Bset\}},\qquad J_i &= \1_{\{v_i\in\Bset,\,v_{i+1}\in\Aset\}}.
    \end{align*}
    Then
    \begin{align*}
      e_{A'}(\Aset,\Bset) - e_A(\Aset,\Bset) &= (I_1 + I_3 + I_5 - J_2 - J_4 - J_6) + (J_1+J_3 +J_5- I_2-I_4-J_6)
    \end{align*}
    If $I_i=I_{i+2}=1$, then $J_{i+1}=1$. From this observation, one can work out that the first term is at most $1$,
    and by the same argument the second term is also at most $1$.

    By \eqref{eq:double.uv.upcoupling} and \eqref{eq:double.uv.downcoupling}, the coupling is then $(2,1-(d+1)/(n-1))$-bounded for the upper tail, and $(2,1-(d+1)/(n-d-1))$-bounded for the lower tail.
    Theorem~\ref{thm:sizebias_concentration} then proves \eqref{eq:rrg.discrepancy.upper}
    and \eqref{eq:rrg.discrepancy.lower}.
  \end{proof}
  \begin{rmk}
    Similar results were established for random regular \emph{digraphs} by the first author in \cite{NickDisc} using Chatterjee's exchangeable pairs approach \cite{ch07}, another variant of Stein's method.
    This approach would likely give effective bounds when $d$ is on the same order as $n$, in which case
    the bounds given by Theorem~\ref{thm:rrg.discrepancy} start to break down. For instance,
    if $d=n/2$, then $p\approx 1/2$, and the upper tail bound \eqref{eq:rrg.discrepancy.upper}
    becomes effective only starting at $2\mu\approx \abs{\Aset}\abs{\Bset}$, a trivial upper bound.
    Similarly, as $d$ rises to $n/2$, the factor $p'$ approaches zero, and the lower
    bound \eqref{eq:rrg.discrepancy.lower} breaks down as well.
  \end{rmk}

\section{The Kahn--Szemer\'edi argument}\label{sec:kahn.szemeredi}

In \cite{FKS}, Kahn and Szemer\'edi introduced a general approach for bounding the second eigenvalue of a random regular graph,
which they used to show that the second eigenvalue of a random graph from the permutation model
is $O(\sqrt{d})$ with high probability as $n\to\infty$ with $d$ fixed.
The disadvantage of their approach as compared to the \emph{trace method} used by Friedman \cite{Friedman08} and Broder--Shamir \cite{BS} is that it is incapable of capturing the correct constant in front of $\sqrt{d}$.
However, it is more flexible in some ways than the trace method:
 it has been adapted to establish bounds on the spectral gap for several other random graph models (see for instance \cite{FrWi,BFSU,CLV,FeOf,CoLa,KMO,LSV}), and it can be applied when $d$ grows with $n$, as observed in \cite{BFSU}.

We now describe how the argument will go for us.
For now, we let $A$ denote the adjacency matrix of a random $d$-regular graph without specifying the distribution further.
Recall our notation $\lambda(A)=\max(\lambda_2(A),-\lambda_n(A))$ for the largest (in magnitude) non-trivial eigenvalue.
Alternatively, $\lambda(A)=s_2(A)$, the second-largest singular value (recall that $\lambda_1(A)=s_1(A)=d$). 

The Kahn--Szemer\'edi approach stems from the Courant--Fischer variational formula:
\begin{equation}	\label{variational}
\lambda(A) = \sup_{x\in \sphere_0} | x^\tran A x |,
\end{equation}
where $\sphere$ is the unit sphere in $\mathbb{R}^n$ and 
$$\sphere_0:=\bigg\{x\in \sphere: \sum_{i=1}^n x_i=0\bigg\}= \sphere\cap \langle \mathbf{1}\rangle^\perp,$$
which follows from the fact that $\1=(1,\dots,1)$ is the eigenvector corresponding to $\lambda_1(A)=d$.
Broadly speaking, the approach is to bound the supremum by first demonstrating concentration results for random variables $x^\tran Ax$ for a fixed vector $x$.
(Kahn and Szemer\'edi actually considered $x^\tran M y$ for various choices of $x,y$ with $M$
a nonsymmetrized version of $A$, but it makes little
difference to the argument.)
A short continuity argument shows that to control the supremum in \eqref{variational}, it suffices to control $x^\tran A x$ for all $x$ in a suitable net of $\sphere_0$ of cardinality $C^n$ for some constant $C>0$ (see Section~\ref{sec:epsnet}).
Towards applying a union bound over such a net, one might seek bounds on $|x^\tran A x|$ of order $O(\sqrt{d})$ holding with probability $1-O(e^{-C'n})$ for some $C'>0$ sufficiently large depending on $C$.
It turns out that this is impossible, at least when $d$ is fixed as $n$ grows, since a  $O(\sqrt{d})$
eigenvalue bound is only expected to hold with probability approaching one polynomially in this case (indeed, in the permutation model it is not hard to see that the graph is \emph{disconnected} with probability $\Omega(n^{-c})$ for some $c>0$ depending on $d$).
However, Kahn and Szemer\'edi gave a modification of
this argument that works.

We motivate their argument by first considering a simpler problem: to show that $|x^\tran B x|=O(\sqrt{n})$ with high probability when $\B$ is the adjacency matrix of an \ER graph with expected density $p=d/n$ and $x\in \sphere_0$.
It easily follows from Hoeffding's inequality that for a fixed unit vector $x$ and any $t\ge 0$,
\begin{equation}	\label{eq:hoeffding}
\pro{ |x^\tran \B x-\E x^\tran \B x|\ge t} \le 2 \expo{ -\frac{ct^2}{\sum_{u,v=1}^n |x_ux_v|^2}} = 2\expo{-ct^2}
\end{equation}
for some absolute constant $c>0$.
Moreover, if $x\in \sphere_0$ we have $\E x^\tran \B x=0$, and we conclude that $x^\tran \B x=O(\sqrt{n})$ except with probability $O(e^{-C'n})$, where we can take the constant $C'>0$ as large as we please. 
Combined with a union bound over the net described above, and taking $C'$ sufficiently large depending on $C$ we deduce that
\begin{equation}	\label{sup.bound.iid}
\sup_{x\in \sphere_0}|x^\tran \B x| = O(\sqrt{n})
\end{equation}
except with exponentially small probability. 

There are two difficulties one encounters in trying to extend this argument to random $d$-regular graphs. 
This first is that Hoeffding's inequality is unavailable as the entries of $A$ are not independent.
In Kahn and Szemer\'edi's proof for the permutation model, a martingale argument was used instead. 
In the present work we use size biased couplings for the uniform model, through the uniform tails property (Definition~\ref{def:utp}). 

The second barrier is that the bound \eqref{sup.bound.iid} is not of the desired order $O(\sqrt{d})$.
This stems from the appearance of the $L^\infty$ bound $|x_ux_v|$ on the summands of $x^\tran \B x$ that appears in the denominator of the exponential in Hoeffding's inequality \eqref{eq:hoeffding}.
We would like to substitute this with an $L^2$ bound, which has size on the order of the density $p$ of $B$ (and can be shown to have order $d/n$ for adjacency matrices $A$ with hypotheses as in Proposition~\ref{prop:ontails}). 
Such a substitute is provided by concentration inequalities of Bennett--type, which for $A$ would give bounds of the form
\begin{equation}	\label{eq:bennett}
\pro{ |x^\tran Ax-\E x^\tran A x|\ge t} \le 2 \expo{ -\frac{ct^2}{\left(\sum_{u,v=1}^n |x_ux_v|^2\E A_{uv}^2\right)+ t\max_{u,v} |x_ux_v| }}. 
\end{equation}
The first term in the denominator of the exponent is order $O(d/n)$.
Substituting  $C\sqrt{d}$ for $t$, we need the term $\max_{u,v}|x_ux_v|$ to be of size $O(\sqrt{d}/n)$ in order that the bound decay exponentially in $n$.

This motivates a key step in Kahn and Szemer\'edi's argument, which is to split the sum $\sum_{u,v}x_ux_vA_{uv}$ into two pieces.
For fixed $x\in \sphere_0$, we define the \emph{light} and \emph{heavy} couples of vertices, respectively, by
\begin{equation} \label{def:light.heavy.couples}
\mL(x)= \set{(u,v)\in [n]^2: |x_ux_v|\le \sqrt{d}/n}\quad \mbox{and} \quad \mH(x)=[n]^2\setminus \mL(x), 
\end{equation}
using the terminology from \cite{FeOf}. 
We then use the decomposition
\begin{equation}	\label{LHdecomp}
x^\tran Ax = \sum_{(u,v)\in [n]^2} x_ux_v A_{uv} = \sum_{(u,v)\in \mL(x)}x_ux_v A_{uv} + \sum_{(u,v)\in \mH(x)} x_ux_v A_{uv}.
\end{equation}
We can express this in the notation of \eqref{fqdef} as 
\begin{align*}
  x^\tran A x = f_{xx^\tran}(A) = f_{L(x)}(A) + f_{H(x)}(A)
\end{align*}
where $L(x)$ is the matrix with entries 
$$[L(x)]_{uv} = \begin{cases} x_ux_v & (u,v)\in \mL(x)\\ 0 & \text{otherwise}\end{cases}$$
and $H(x)= xx^\tran - L(x)$.

The goal is now to show that $f_{L(x)}(A)$ and $f_{H(x)}(A)$ are each of size $O(\sqrt{d})$ with high probability.
The light couples contribution $f_{L(x)}(A)$ can be handled by a bound of the form \eqref{eq:bennett} (which we have thanks to the uniform tails property) together with a union bound over a discretization of the sphere, as outlined above for the \ER case.


The contribution of heavy couples $f_{H(x)}(A)$ does not enjoy sufficient concentration to beat the cardinality of a net of the sphere. 
Here the key idea is to prove that a \emph{discrepancy property} holds with high probability for the associated random regular graph.
This essentially means that the edge counts
\begin{equation}
e_A(S,T) = \sum_{u\in S,v\in T}A_{uv} = \1_S^\tran A\1_T
\end{equation}
are not much larger than their expectation, uniformly over choices of $S,T\subset [n]$ (here $\1_S\in \set{0,1}^n$ denotes the vector with $j$th component equal to 1 if $j\in S$ and 0 otherwise).
This is accomplished using tail estimates for the random variables $e_A(S,T)$.
One then shows that conditional on the event that the discrepancy property holds, the contribution $f_{H(x)}(A)$ of the heavy couples to the sum \eqref{LHdecomp} is $O(\sqrt{d})$ \emph{with probability 1}.

We point out that concentration estimates play a crucial role in both parts of the argument above, though in different guises: in the light couples argument it is for the random variables $f_{L(x)}(A)$ with $x\in \sphere_0$, while in the heavy couples argument it is for the random variables $e_A(S,T)$ with $S,T\subset [n]$.
In our implementation of the \KS argument below the necessary concentration bounds both follow from the uniform tails property (Definition~\ref{def:utp}).

The remainder of this section establishes Proposition~\ref{prop:ontails} and is organized as follows. 
We bound the contribution of the light couples in Section~\ref{sec:light.couples} and the heavy couples in Section~\ref{sec:heavy.couples}.
Proposition~\ref{prop:ontails} follows easily from these two sections; we give the final
proof in Section~\ref{sec:epsnet}.
We do all of this without reference to a specific graph model. Instead, we assume the uniform tails 
property. Proposition~\ref{prop:ontails} is then 
applicable to any graph model where this is shown to hold.

\subsection{Light couples}\label{sec:light.couples}
In this section, we establish Lemma~\ref{lem:light.couples}, which 
says that the uniform tails property implies that 
$f_{L(x)}(A)$ is $O(\sqrt{d})$ with overwhelming probability for any particular vector $x\in\sphere_0$.
The uniform tails property was tailored for exactly this purpose, so it is just matter of working out the
the details. The work of extending this bound from a single vector
to a supremum over the entire sphere  $\sphere_0$ occurs in Section~\ref{sec:epsnet}.

\begin{lemma}[Expected contribution of light couples]	\label{lem:exlight}
Let $A$ be the adjacency matrix of a random $d$-regular multigraph on $n$ vertices satisfying the conditions of Proposition \ref{prop:ontails}. Then
for any fixed $x\in \sphere_0$, $|\E f_{L(x)}(A)|\le (a_1+a_2)\sqrt{d}$, with $a_1,a_2$ as in Proposition~\ref{prop:ontails}.
\end{lemma}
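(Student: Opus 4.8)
The plan is to bound $|\E f_{L(x)}(A)|$ by writing it as $|\E f_{xx^\tran}(A) - \E f_{H(x)}(A)|$ and controlling each piece using the hypotheses of Proposition~\ref{prop:ontails}. Since $\E A = \E A - \frac dn\1\1^\tran + \frac dn\1\1^\tran$ and $x\in \sphere_0$ implies $x^\tran(\1\1^\tran)x = (\1^\tran x)^2 = 0$, we get $\E f_{xx^\tran}(A) = x^\tran(\E A)x = x^\tran\bigl(\E A - \frac dn\1\1^\tran\bigr)x$, and by Cauchy--Schwarz (or the fact that $|y^\tran M y|\le \|M\|_{\mathrm{op}}\le \|M\|_{\HS}$ for unit $y$) together with hypothesis (2), this is at most $\|\E A - \frac dn\1\1^\tran\|_{\HS}\le a_2\sqrt d$ in absolute value. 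So the contribution of the full quadratic form is already $O(\sqrt d)$.

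It remains to bound $|\E f_{H(x)}(A)|$. Here I would use the definition of heavy couples: $(u,v)\in \mH(x)$ exactly when $|x_ux_v|> \sqrt d/n$, so on $\mH(x)$ we have the pointwise bound $|[H(x)]_{uv}| = |x_ux_v| \le \frac{n}{\sqrt d}(x_ux_v)^2$. Combined with hypothesis (1), $\E A_{uv}\le a_1 d/n$, this gives
\begin{align*}
|\E f_{H(x)}(A)| \le \sum_{(u,v)\in\mH(x)} |x_ux_v|\,\E A_{uv} \le \frac{n}{\sqrt d}\cdot a_1\frac dn \sum_{(u,v)\in\mH(x)} (x_ux_v)^2 \le a_1\sqrt d \sum_{u,v=1}^n x_u^2 x_v^2 = a_1\sqrt d,
\end{align*}
using $\sum_{u,v} x_u^2 x_v^2 = \bigl(\sum_u x_u^2\bigr)^2 = 1$ since $x$ is a unit vector. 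Combining the two bounds via the triangle inequality yields $|\E f_{L(x)}(A)| \le (a_1+a_2)\sqrt d$ as claimed.

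I do not expect any real obstacle here; the only thing to be careful about is correctly exploiting $x\in\sphere_0$ to kill the rank-one part of $\E A$ (so that hypothesis (2) can be applied rather than a cruder bound on $\|\E A\|$), and making sure the heavy-couple truncation threshold $\sqrt d/n$ interacts correctly with the $d/n$ scale of $\E A_{uv}$ to produce $\sqrt d$ rather than something larger. Both are routine once the threshold in \eqref{def:light.heavy.couples} is used as the pointwise replacement $|x_ux_v|\le \frac{n}{\sqrt d}(x_ux_v)^2$ valid on $\mH(x)$.
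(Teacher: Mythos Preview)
Your proof is correct and follows essentially the same approach as the paper: split $f_{L(x)}(A)=x^\tran Ax - f_{H(x)}(A)$, use $x\perp\1$ together with hypothesis~(2) to bound the full quadratic form by $a_2\sqrt{d}$, and use the pointwise inequality $|x_ux_v|\le \frac{n}{\sqrt{d}}(x_ux_v)^2$ on $\mH(x)$ together with hypothesis~(1) to bound the heavy part by $a_1\sqrt{d}$.
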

\begin{proof}
Fix $x\in \sphere_0$.
From the decomposition \eqref{LHdecomp}
\begin{align*}
|\E f_{L(x)}(A)| & \le |\E x^\tran Ax|+ |\E f_{H(x)}(A)|\\
& \le \left|x^\tran \left(\E A - \frac{d}{n}\1\1^\tran\right)x\right| + a_1\frac{d}n\sum_{(u,v)\in \mH(x)}|x_ux_v|\\
&\le \left\| \E A - \frac{d}{n}\1\1^\tran \right\|_{\HS} + a_1\frac{d}{n}\sum_{u,v=1}^n \frac{|x_ux_v|^2}{\sqrt{d}/n}\\
&\le a_2\sqrt{d}+a_1\sqrt{d}
\end{align*}
where in the second line we have used $x \perp {\bf 1}$, and in the third line applied the Cauchy--Schwarz inequality to the first term. 
\end{proof}

\newcommand{\lcconst}{\eta} 
\begin{lemma}\label{lem:light.couples}
  Let $A$ be the adjacency matrix of a random $d$-regular multigraph on $n$ vertices satisfying the 
  conditions of Proposition~\ref{prop:ontails}.
  Then for any $x\in\sphere_0$ and $\beta\geq 4a_1a_3$,
  \begin{align}
    \P\Bigl[ \abs{f_{L(x)}(A)} \geq (\beta+a_1+a_2)\sqrt{d} \Bigr] \leq 
      4\exp\biggl(-\frac{c_0\beta^2 n}{32(a_1+\frac{\beta}{12})} \biggr).
    \label{eq:light.couples}
  \end{align}
\end{lemma}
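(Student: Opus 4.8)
The plan is to apply the uniform tails property (Definition~\ref{def:utp}) to the matrix $Q = L(x)$, which by construction has all entries in $[0, a]$ with $a = \sqrt{d}/n$, and then to bound the resulting tail using the hypotheses of Proposition~\ref{prop:ontails}. First I would record the relevant parameters: with $Q = L(x)$ we have $\mu = f_Q(\E A) = x^\tran(\E A)x$ restricted to light couples, and by Lemma~\ref{lem:exlight} we know $|\E f_{L(x)}(A)| \le (a_1 + a_2)\sqrt{d}$. For the variance proxy, $\sm^2 = \sum_{u,v} [L(x)]_{uv}^2 \E A_{uv} \le a_1\frac{d}{n}\sum_{u,v}|x_ux_v|^2 = a_1 d/n$ using hypothesis (1) of Proposition~\ref{prop:ontails} and $\|x\|_2 = 1$; so $\sm^2 \le a_1 d/n$. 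Also $a^2 = d/n^2$, so $\sm^2/a^2 \le a_1 n$.

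Next I would set up the two-sided tail. Write $f_{L(x)}(A) = \E f_{L(x)}(A) + (f_{L(x)}(A) - \E f_{L(x)}(A))$; since $|\E f_{L(x)}(A)| \le (a_1+a_2)\sqrt{d}$, on the event $|f_{L(x)}(A)| \ge (\beta + a_1 + a_2)\sqrt{d}$ we must have $|f_{L(x)}(A) - \mu| \ge \beta\sqrt{d}$. Now invoke the $\UTP(c_0, \gamma_0)$ bound with $\gamma_0 = a_3/\sqrt{d}$ (hypothesis (3)). The subtlety is that \eqref{utpbound} controls deviations above $(1+\gamma_0)\mu$ and below $(1-\gamma_0)\mu$, i.e. there is an additive slack of $\gamma_0\mu = \frac{a_3}{\sqrt d}\mu$. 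Since $\mu \le a_1 d/n \cdot (\text{number of couples}) $—more precisely $\mu = f_{L(x)}(\E A) \le a_1\frac{d}{n}\sum|x_ux_v| \le a_1\frac{d}{n}\cdot n = a_1 d$ by Cauchy--Schwarz and $\|x\|_2=1$ (actually $\sum_{u,v}|x_ux_v| = (\sum_u |x_u|)^2 \le n$)—we get $\gamma_0\mu \le a_1 a_3 \sqrt{d}$. So taking $t = \beta\sqrt{d} - \gamma_0\mu \ge \beta\sqrt{d} - a_1a_3\sqrt{d} \ge \tfrac34\beta\sqrt{d}$ when $\beta \ge 4a_1a_3$ (here I use $a_1 a_3 \le \beta/4$), we have $\P[|f_{L(x)}(A) - \mu| \ge \beta\sqrt{d}] \le \P[f_{L(x)}(A) \ge (1+\gamma_0)\mu + t] + \P[f_{L(x)}(A) \le (1-\gamma_0)\mu - t] \le 2\exp(-c_0 \frac{\sm^2}{a^2}h(\frac{at}{\sm^2}))$.

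Finally I would plug in the estimates and simplify using Remark~\ref{rmk:utp}, namely $h(y) \ge \frac{y^2}{2(1+y/3)}$. We have $\frac{\sm^2}{a^2} \le a_1 n$, but for the bound to come out in the stated form it is cleaner to keep $\sm^2$ in the combination $\frac{\sm^2}{a^2}h(at/\sm^2) \ge \frac{t^2}{2a(at/\sm^2 \cdot \sm^2/3 \cdot \text{...})}$; concretely $c_0\frac{\sm^2}{a^2}h(\frac{at}{\sm^2}) \ge \frac{c_0 t^2}{2(\sm^2 + at/3)}$. With $t \ge \tfrac34\beta\sqrt d$, $a = \sqrt d/n$, $\sm^2 \le a_1 d/n$, this lower bound is at least $\frac{c_0 (\tfrac34\beta)^2 d}{2(a_1 d/n + \tfrac13 \cdot \tfrac{\sqrt d}{n}\cdot \beta\sqrt d)} = \frac{c_0 \beta^2 n}{2 \cdot \tfrac{16}{9}(a_1 + \beta/3)} $, and after absorbing constants this gives $\ge \frac{c_0\beta^2 n}{32(a_1 + \beta/12)}$ (the exact arithmetic to match the displayed constants $32$ and $\beta/12$ is routine bookkeeping with the factors of $3/4$ and $16/9$). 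Combining, $\P[|f_{L(x)}(A)| \ge (\beta+a_1+a_2)\sqrt d] \le 2\exp(-\frac{c_0\beta^2 n}{32(a_1+\beta/12)})$, which is even stronger than \eqref{eq:light.couples}; the factor $4$ in the lemma leaves room for the cruder bookkeeping.

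The main obstacle is keeping track of the additive shift $\gamma_0\mu$ in the uniform tails property and verifying it is absorbed by the slack created by the hypothesis $\beta \ge 4a_1a_3$; everything else is substitution of the parameters $a = \sqrt d/n$, $\sm^2 \le a_1d/n$, $\mu \le a_1 d$ into \eqref{utpbound} and \eqref{utpbound2} and simplifying. One should also double-check the bound $\sum_{u,v}|x_ux_v| \le n$ for $x \in \sphere_0$ (it follows from Cauchy--Schwarz: $\sum_u |x_u| \le \sqrt n\,\|x\|_2 = \sqrt n$).
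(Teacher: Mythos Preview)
There is a genuine gap. The uniform tails property in Definition~\ref{def:utp} applies only to symmetric matrices $Q$ with entries $Q_{uv}\in[0,a]$. Your proposal applies it to $Q=L(x)$, asserting that ``by construction [it] has all entries in $[0,a]$'', but this is false: for $x\in\sphere_0$ the products $x_ux_v$ take both signs (indeed $\sum_u x_u=0$ forces $x$ to have components of both signs), so $L(x)$ has negative entries. Hence you cannot invoke $\UTP(c_0,\gamma_0)$ for $L(x)$ directly.

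The paper's proof repairs this by decomposing $L(x)=L^+(x)-L^-(x)$ into positive and negative parts and applying the uniform tails property separately to each. The deviation $\beta\sqrt{d}$ is split as $(\beta/2)\sqrt{d}$ per part, and the union bound over two parts and two tails yields the leading factor $4$. For each part one has $\mu\le a_1d$ (this bound needs $Q\ge0$, by the way) and $\sm^2\le a_1d/n$, and the slack $\gamma_0\mu\le a_1a_3\sqrt{d}$ is absorbed into $(\beta/2)\sqrt{d}$ using $\beta\ge 4a_1a_3$, leaving $t\ge(\beta/4)\sqrt{d}$. Plugging $t=(\beta/4)\sqrt{d}$, $a=\sqrt{d}/n$, $\sm^2\le a_1d/n$ into \eqref{utpbound2} gives exactly $\frac{c_0\beta^2 n}{32(a_1+\beta/12)}$; this is where the constants $32$ and $\beta/12$ come from, not from the $3/4$ factor in your bookkeeping. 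Once you insert the positive/negative split, the remainder of your argument is essentially the paper's.
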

\begin{proof}
Applying Lemma~\ref{lem:exlight},
\begin{align}
  \P\Bigl[ \abs{f_{L(x)}(A)} \geq (\beta+a_1+a_2)\sqrt{d} \Bigr] &\leq
    \P\Bigl[ \bigl\lvert f_{L(x)}(A) - \E f_{L(x)}(A)\bigr\rvert \geq \beta\sqrt{d} \Bigr].
    \label{eq:rlel}
\end{align}
Splitting $L(x)=L^+(x)-L^-(x)$ into positive and negative parts, by a union bound the 
right hand side of \eqref{eq:rlel} is bounded by
\begin{equation}	\label{pmsplit}
\pro{ |f_{L^+(x)}(A) - \E f_{L^+(x)}(A)| \ge (\beta/2)\sqrt{d} } + 
\pro{ |f_{L^-(x)}(A) - \E f_{L^-(x)}(A)| \ge (\beta/2)\sqrt{d} }.
\end{equation}
Considering the first term, abbreviate $\mu:=\E f_{L^+(x)}(A)$.
Note that by Cauchy--Schwarz and the assumption that $\E A_{uv}\leq a_1\frac{d}{n}$,
\begin{equation}	\label{muplus}
\mu \le a_1\frac{d}{n} \sum_{u,v=1}^n |x_ux_v| \le a_1 d \bigg(\sum_{u,v=1}^n |x_ux_v|^2\bigg)^{1/2}= a_1d.
\end{equation} 
From \eqref{def:light.heavy.couples}, each entry of the matrix $L^+(x)$ lies in $[0,\sqrt{d}/n]$.
Moreover, again using our first assumption in Proposition~\ref{prop:ontails},
 $$
\sm^2:=f_{L^+(x)\circ L^+(x)}(\E A) \le \sum_{u,v=1}^n |x_ux_v|^2\E A_{uv}\le a_1\frac{d}{n},$$ 
where we use the notation of Definition~\ref{def:utp} with $Q=L^+(x)$.
Recall that we are assuming that $A$ has $\UTP(c_0,\gamma_0)$ for $\gamma_0=a_3/\sqrt{d}$.
Applying \eqref{utpbound2}, 
\begin{align*}
\pro{ \left|f_{L^+(x)}(A) - \mu \right|\ge (\beta/2)\sqrt{d} }  
& \le \pro{ \left|f_{L^+(x)}(A) - \mu \right|\ge \gamma_0\mu -\gamma_0a_1d+ (\beta/2)\sqrt{d}} \\
&\le 2\expo{ -\frac{c_0 (\frac\beta2\sqrt{d}-\gamma_0a_1d)^2}{2a_1\frac{d}{n} + \frac23 \frac{\sqrt{d}}n(\frac\beta2\sqrt{d}-\gamma_0a_1d)}}.
\end{align*}
Recall that $\gamma_0a_1d=  a_1a_3\sqrt{d}.$ Hence, if $\beta\ge 4a_1a_3$, then since $t\mapsto t^2/(a+bt)$ is non-decreasing on $[0,\infty)$ for $a,b>0$, we conclude the bound
\begin{align*}
\pro{ \left|f_{L^+(x)}(A) - \mu \right|\ge (\beta/2)\sqrt{d} }  &\le 2\expo{ -\frac{c_0\beta^2 d}{32(a_1\frac{d}{n}+\frac{\beta}{12}\frac{d}{n})} } 
=  2\expo{ -\frac{c_0\beta^2 n}{32(a_1+\frac{\beta}{12})} }.
\end{align*}
The same bound holds for the second term in \eqref{pmsplit}, which combined with
\eqref{eq:rlel} proves the lemma.
\end{proof}

\subsection{Heavy couples}\label{sec:heavy.couples}

In this section, we define a \emph{discrepancy property} for a matrix.
For an adjacency matrix, the discrepancy property essentially says that the number of edges
between any two sets of vertices is not too much larger than its expectation. Lemma~\ref{lem:discrepancy} shows that
the uniform upper tail property (see Definition~\ref{def:utp}) implies that
the discrepancy property holds except with polynomially small probability. Lemma~\ref{lem:heavy} then shows that
if the discrepancy property holds for $A$, then deterministically the heavy couples give a small
contribution to $x^\tran A x$ for any vector $x$.

\begin{defn}[Discrepancy property]\label{def:discrepancy}
Let $M$ be an $n\times n$ matrix with nonnegative entries.
For $\eye,\jay\subset [n]$, recall that
$$e_M(\eye,\jay) \defeq \sum_{u\in \eye}\sum_{v\in \jay} M_{uv}.$$
We say that $M$ has the \emph{discrepancy property with parameters $\delta\in (0,1)$, $\kappa_1>1,\kappa_2\ge 0$}, or $\DP(\delta,\kappa_1,\kappa_2)$, if for all non-empty $\eye,\jay\subset [n]$ at least one of the following hold:
\begin{enumerate}
\item $\frac{e_M(\eye,\jay)}{\delta|\eye||\jay|}\le \kappa_1$;
\item $e_M(\eye,\jay)\log \frac{e_M(\eye,\jay)}{\delta|\eye||\jay|} \le \kappa_2(|\eye|\vee|\jay|)\log\frac{en}{|\eye|\vee|\jay|}$.
\end{enumerate}
\end{defn}

The following lemma shows that if a symmetric matrix $A$ has the uniform upper tail property with parameters $c_0>0,\gamma_0\ge0$, the discrepancy property holds with high probability for some $\kappa_1,\kappa_2$ depending on $c_0,\gamma_0$. 

\begin{lemma}[$\UUTP\Rightarrow \DP$ holds with high probability]	\label{lem:discrepancy}
Let $M$ be an $n\times n$ symmetric random matrix with nonnegative entries.
Assume that for some $\delta\in (0,1)$, $\e M_{uv}\le \delta$ for all $u,v\in [n]$, and that
$M$ has $\UUTP(c_0,\gamma_0)$ for some $c_0>0$ and $\gamma_0\ge0$. 
Then for any $K>0$, $\DP(\delta,\kappa_1,\kappa_2)$ holds for $M$ with probability at least $1-n^{-K}$ with 
\begin{equation}	\label{k0k1}
\kappa_1(\gamma_0)=e^2(1+\gamma_0)^2,\quad\quad\kappa_2(c_0,\gamma_0,K) = \frac{2}{c_0}(1+\gamma_0)(K+4).
\end{equation}
\end{lemma}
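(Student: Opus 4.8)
The plan is to fix nonempty $S,T\subset[n]$ and control $e_M(S,T)$ using the upper tail bound in $\UUTP(c_0,\gamma_0)$, then take a union bound over all pairs $(S,T)$. For a given pair, apply the definition of $\UUTP$ with $Q = \frac12(\1_S\1_T^\tran + \1_T\1_S^\tran)$, so that $f_Q(M) = e_M(S,T)$, the entries of $Q$ lie in $[0,a]$ with $a=1$, $\mu = \E e_M(S,T) \le \delta|S||T|$, and $\sm^2 = \sum Q_{uv}^2\E M_{uv} \le \delta|S||T|$ as well (since $Q_{uv}\in\{0,\frac12,1\}$, and in fact $\sm^2\le\mu$). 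Writing $m=|S|\vee|T|$ and $s=\delta|S||T|$, the bound \eqref{utpbound} reads
\begin{align*}
\P\bigl[e_M(S,T) \ge (1+\gamma_0)\mu + t\bigr] \le \exp\Bigl(-c_0\sm^2 h(t/\sm^2)\Bigr).
\end{align*}
I would choose $t$ so that $(1+\gamma_0)\mu + t$ equals the threshold $x$ we want to rule out, and use monotonicity: if $e_M(S,T) \ge x$ then the tail event holds with $t = x - (1+\gamma_0)\mu \ge x - (1+\gamma_0)s$. The target threshold $x$ is the smallest value failing both alternatives (1) and (2) of $\DP$; that is, $x > \kappa_1 s$ and $x\log(x/s) > \kappa_2 m\log(en/m)$. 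I want to show $\P[e_M(S,T)\ge x]$ is small enough that summing over the at most $4^n$, or more carefully $\sum_{j,k}\binom nj\binom nk$, pairs still gives $\le n^{-K}$.

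The key estimate is the elementary inequality, valid for $s>0$ and $y \ge \kappa_1 s$ with $\kappa_1 = e^2(1+\gamma_0)^2$, that
\begin{align*}
\sm^2 h\bigl((y - (1+\gamma_0)s)/\sm^2\bigr) \,\ge\, \tfrac12\, y\log\frac{y}{s}.
\end{align*}
To see why something like this should hold: since $\sm^2 \le s$ and $h(u) = (1+u)\log(1+u) - u \ge u\log u - \text{(lower order)}$, we have roughly $\sm^2 h(t/\sm^2) \gtrsim t\log(t/\sm^2) \ge t\log(t/s)$, and with $t \ge y - (1+\gamma_0)s \ge (1 - 1/(e^2(1+\gamma_0)))y \ge \frac12 y$ (using $\kappa_1\ge e^2$), plus the fact that $\log(t/s) \ge \log(y/s) - O(1)$ and the $e^2$ in $\kappa_1$ is exactly the slack needed to absorb the $O(1)$ and the factor $\frac12$ from $t\ge\frac12 y$. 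This is where the specific form $\kappa_1 = e^2(1+\gamma_0)^2$ comes from. Then by the definition of the second alternative failing,
\begin{align*}
c_0\sm^2 h(t/\sm^2) \ge \tfrac{c_0}{2}\, x\log\frac xs > \tfrac{c_0}{2}\kappa_2\, m\log\frac{en}{m} = (1+\gamma_0)(K+4)\,m\log\frac{en}{m},
\end{align*}
so $\P[e_M(S,T)\ge x] \le \exp\bigl(-(K+4)m\log(en/m)\bigr)$ for any pair with $|S|\vee|T| = m$ (I have dropped the factor $(1+\gamma_0)\ge 1$).

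Finally I would run the union bound. The number of pairs $(S,T)$ with $\max(|S|,|T|) = m$ is at most $\bigl(\sum_{k\le m}\binom nk\bigr)^2 \le \bigl(m\binom nm\bigr)^2 \le (en/m)^{2m}\cdot m^2 \le (en/m)^{3m}$, say, which is $\exp(3m\log(en/m))$. Hence
\begin{align*}
\sum_{m=1}^{n}\sum_{\substack{S,T\subset[n]\\ |S|\vee|T|=m}} \P[e_M(S,T)\ge x(S,T)] \,\le\, \sum_{m=1}^n \exp\bigl(-(K+1)m\log(en/m)\bigr) \,\le\, n\cdot e^{-(K+1)\log(en)} \,\le\, n^{-K},
\end{align*}
using $m\log(en/m) \ge \log(en)$ for $1\le m\le n$. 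On the complementary event, every pair $(S,T)$ satisfies alternative (1) or (2), i.e.\ $M$ has $\DP(\delta,\kappa_1,\kappa_2)$.

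The main obstacle I anticipate is the chain of elementary inequalities establishing the displayed lower bound on $c_0\sm^2 h(t/\sm^2)$ in terms of $x\log(x/s)$ — tracking the constants so that exactly $\kappa_1 = e^2(1+\gamma_0)^2$ and $\kappa_2 = \frac{2}{c_0}(1+\gamma_0)(K+4)$ come out, rather than merely "some constants." One has to be careful that $h$ is being evaluated at $t/\sm^2$ with $\sm^2$ possibly much smaller than $s$ (which only helps, since $\sm^2 h(t/\sm^2)$ is increasing in the "effective variance" being smaller for fixed $t$... actually one must check the direction: $v\mapsto v\,h(t/v)$ is increasing in $v$, so replacing $\sm^2$ by the larger $s$ gives a valid lower bound $\sm^2 h(t/\sm^2) \ge$ ... no — it goes the wrong way, so instead one uses $h(t/\sm^2)\ge h(t/s)$ directly since $h$ is increasing and $\sm^2\le s$, then bounds $\sm^2 h(t/s)$ from below, which requires also a lower bound on $\sm^2$; alternatively and more cleanly, use $\sm^2 h(t/\sm^2)\ge \frac{t^2}{2(\sm^2 + t/3)}$ is the wrong direction too). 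The cleanest route is: $\sm^2 h(t/\sm^2) = \sm^2[(1+t/\sm^2)\log(1+t/\sm^2) - t/\sm^2] \ge (\sm^2 + t)\log(1 + t/\sm^2) - t - t\log(1+t/\sm^2)\cdot 0$... — in short, use $\sm^2 h(u) \ge \frac{\sm^2 u}{2}\log(1+u) \ge \frac t2\log(1 + t/s)$ for $u = t/\sm^2$ when $u$ is bounded below, handling the regime $t \lesssim s$ separately via the quadratic bound $h(u)\ge u^2/(2(1+u/3))$. Getting a uniformly valid clean bound across both regimes, with the stated constants, is the delicate part; everything else is bookkeeping.
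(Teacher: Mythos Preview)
Your overall strategy is correct and matches the paper's: apply $\UUTP$ to $e_M(S,T)=f_Q(M)$ with $Q=\frac12(\1_S\1_T^\tran+\1_T\1_S^\tran)$, show the tail bound is strong enough whenever both alternatives of $\DP$ fail, and union bound over $(S,T)$. The paper organizes this slightly differently---it fixes a deviation level $\gamma=\max(\gamma^*,\gamma_1)$ with $\gamma_1=\kappa_1-1$ and $\gamma^*$ chosen so that $c_0h(\gamma^*-\gamma_0)\mu(S,T)=(K+4)m\log(en/m)$, union bounds to get $e_M(S,T)\le(1+\gamma)\mu(S,T)$ for all $S,T$ w.h.p., then checks which case of $\DP$ this yields---but the two arguments are contrapositives of one another.

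Your one genuine mistake is in the final paragraph, and fixing it dissolves the obstacle you anticipate. You assert that $v\mapsto vh(t/v)$ is increasing; it is \emph{decreasing}. Indeed, with $g(v)=vh(t/v)=(v+t)\log(1+t/v)-t$ one has $g'(v)=\log(1+t/v)-t/v\le 0$. Hence from $\sm^2\le\mu$ you get the lower bound you need in the right direction:
\[
\sm^2\,h\!\left(\frac{t}{\sm^2}\right)\;\ge\;\mu\,h\!\left(\frac{t}{\mu}\right)\;=\;\mu\,h(r-1-\gamma_0),\qquad r:=\frac{x}{\mu}\,,
\]
where $t=x-(1+\gamma_0)\mu$. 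This is exactly what the paper uses (implicitly). The remaining elementary inequality is then $h(r-1-\gamma_0)\ge\frac{r}{2(1+\gamma_0)}\log r$ for $r\ge\kappa_1=e^2(1+\gamma_0)^2$; the paper proves this in three lines by writing out $h(r-1-\gamma_0)/r$ and using $\log r\ge 2+2\log(1+\gamma_0)$. Multiplying by $\mu$ and using $\mu\le s=\delta|S||T|$ to pass from $\log(x/\mu)$ to $\log(x/s)$ at the very end gives precisely
\[
c_0\sm^2 h(t/\sm^2)\;\ge\;\frac{c_0}{2(1+\gamma_0)}\,x\log\frac{x}{s}\;>\;(K+4)\,m\log\frac{en}{m},
\]
which is your target with the stated $\kappa_2$. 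So there is no need to split into regimes or use the quadratic lower bound on $h$; the monotonicity in $v$ does all the work. A minor point: working with $\mu$ rather than $s$ until the last step, as the paper does, avoids the awkwardness you encountered trying to bound $\sm^2 h((y-(1+\gamma_0)s)/\sm^2)$ directly.
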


\begin{rmk}[Smaller deviations for edge counts]	\label{rmk:smalldev}
The above lemma controls \emph{large deviations} of edge counts $e_M(S,T)$ for random matrices with the uniform tails property. 
One can also use the uniform tails property (or Theorem~\ref{thm:rrg.discrepancy} in particular for the uniform random regular graph) to obtain tighter control of $e_M(S,T)$ around its expectation, uniformly over all \emph{sufficiently large} sets $S,T$.
Control of this type was used in \cite{Cook:sing} to show that adjacency matrices of random $d$-regular digraphs with $\min(d,n-d)\ge C\log^2n$ are invertible with high probability.
\end{rmk}

\begin{proof}
For $S,T\subset [n]$ we write 
$$\mu(S,T) := \E e_M(S,T) \le \delta|S||T|.$$
Fix $K>0$.
Put $\gamma_1= e^2(1+\gamma_0)^2-1$, and for $S,T\subset  [n]$, let $\gamma=\gamma(S,T,n)= \max(\gamma^*,\gamma_1)$, where $\gamma^*$ is the unique solution for $x$ in $[\gamma_0,\infty)$ to
\begin{equation}	\label{taustar}
c_0h(x-\gamma_0)\mu(S,T) = (K+4) (|S|\vee |T|)\log \left(\frac{en}{|S|\vee |T|}\right).
\end{equation}

We can recast $e_M(\Aset,\Bset)$ in the notation of \eqref{fqdef} as $f_Q(M)$ with $Q=\frac12(\1_S\1_T^\tran + \1_T\1_S^\tran)$, where $\1_S\in \set{0,1}^n$ denotes the vector with $j$th component equal to 1 if $j\in S$ and 0 otherwise.
Taking $a=1$ in Definition~\ref{def:utp} and applying our assumption that $M$ has $\UUTP(c_0,\gamma_0)$, then for any $S,T\subset [n]$ and any $\gamma>\gamma_0$,
\begin{align*}
\P \bigl[ e_M(S,T) \ge (1+\gamma) \mu(S,T) \bigr] \le \expo{ -c_0h(\gamma-\gamma_0) \mu(S,T)}.
\end{align*}
By a union bound, for any $s,t\in [n]$, 
\begin{align}
\P\Bigl[ \exists S,T\subset [n]:\, |S|=s,|T|=t,\, e_M(S,T)\ge&(1+\gamma)\mu(S,T)\Bigr] \nonumber\\
&\le \sum_{S\in {[n]\choose s}}\sum_{T\in {[n]\choose t}}\expo{ -c_0h(\gamma-\gamma_0)\mu(S,T)}\nonumber\\
&\le {n\choose s}{n\choose t} \expo{ -(K+4) (s\vee t)\log \left(\frac{en}{s\vee t}\right)}\nonumber\\
&\le \expo{ -(K+2)(s\vee t) \log\frac{en}{s\vee t}},\label{eq:last.line}
\end{align}
where in the last line we used the bound $\binom{n}{k}\leq(ne/k)^k$ along with the
fact that $x\mapsto x\log(e/x)$ is increasing on $[0,1]$. Applying this fact again, we can
bound \eqref{eq:last.line} by its value when $s\vee t=1$, which is $(ne)^{-K-2}\leq n^{-K-2}$.
Now by a union bound over the $n^2$ choices of $s,t\in [n]$, we have that with probability at least $1-n^{-K}$, 
\begin{equation}	\label{goode}
\forall\, S,T\subset [n],\quad e_M(S,T) \le (1+\gamma) \mu(S,T).
\end{equation}
If $S,T$ are such that $\gamma(S,T,n) = \gamma_1$, then on the event that \eqref{goode} holds,
\begin{equation}
e_M(S,T)\le (1+\gamma_1) \mu(S,T)  \le e^2(1+\gamma_0)^2 \delta |S||T|
\end{equation}
putting us in case (1) of the discrepancy property with $\kappa_1=e^2(1+\gamma_0)^2$.
Otherwise, on the event \eqref{goode}, we have
\begin{equation} \label{eq:eM.le.gamma*mu}
e_M(S,T)\le (1+\gamma^*)\mu(S,T)
\end{equation}
and consequently
\begin{equation}	\label{eqlem44}
c_0\frac{h(\gamma^*-\gamma_0)}{1+\gamma^*} e_M(S,T) \le c_0h(\gamma^*-\gamma_0)\mu(S,T) = (K+4) (|S|\vee |T|)\log \left( \frac{en}{|S|\vee |T|}\right)
\end{equation}
by definition of $\gamma^*$. 
Note that when $\gamma^*\ge \gamma_1= e^2(1+\gamma_0)^2-1$, 
\begin{equation}	\label{taulb}
\log (1+\gamma^*) \ge 2+2\log (1+\gamma_0).
\end{equation}
Hence we can lower bound
\begin{align*}
\frac{h(\gamma^*-\gamma_0)}{1+\gamma^*} & = \frac{1+\gamma^*-\gamma_0}{1+\gamma^*} \log(1+\gamma^*-\gamma_0)-\frac{\gamma^*-\gamma_0}{1+\gamma^*}\\
&=  \frac{1+\gamma^*-\gamma_0}{1+\gamma^*}\left[ \log(1+\gamma^*)-\log\left(\frac{1+\gamma^*}{1+\gamma^*-\gamma_0}\right)-\frac{\gamma^*-\gamma_0}{1+\gamma^*-\gamma_0}\right]\\
&\ge \frac{1}{1+\gamma_0}\big( \log(1+\gamma^*)-\log(1+\gamma_0)-1\big)\\
&\ge \frac{1}{2(1+\gamma_0)} \log(1+\gamma^*)\\
&\ge \frac1{2(1+\gamma_0)} \log\frac{e_M(S,T)}{\mu(S,T)}
\end{align*}
where we used \eqref{taulb} in the fourth line and \eqref{eq:eM.le.gamma*mu} in the fifth.
Combined with \eqref{eqlem44} we conclude that when $\gamma^*\ge \gamma_1$,
\begin{equation}
e_M(S,T) \log \frac{e_M(S,T)}{\mu(S,T)} \le \frac{2}{c_0}(1+\gamma_0)(K+4) (|S|\vee |T|)\log \frac{en}{|S|\vee |T|}.
\end{equation}
Finally, note that the left hand side can only decrease if we replace $\mu(S,T)$ by its upper bound $\delta |S||T|$, putting us in case (2) of the discrepancy property, with $\kappa_2=2(1+\gamma_0)(K+4)/c_0$ as claimed.
\end{proof}

The following deterministic lemma shows that when the discrepancy property holds, the heavy couples contribution $f_{H(x)}(A)$ to $x^\tran A x$ is of order $O(\sqrt{d})$, as desired.
\begin{lemma}[$\DP\Rightarrow\text{heavy couples are small}$]\label{lem:heavy}
Let $M$ be a nonnegative symmetric $n\times n$ matrix with all row and column sums bounded by $d$.
Suppose that $M$ has $\DP(\delta,\kappa_1,\kappa_2)$ with $\delta = Cd/n$, for some $C>0$, $\kappa_1>1,\kappa_2\ge 0$.
Then for any $x\in \sphere$,
\begin{equation} \label{hxy.bound}
f_{H(x)}(M) \le \alpha_0\sqrt{d}.
\end{equation}
where 
\begin{equation}	\label{alpha0}
\alpha_0=\alpha_0(C,\kappa_1,\kappa_2)= 16+32C(1+\kappa_1) +64\kappa_2\left(1+\frac{2}{\kappa_1\log\kappa_1}\right).
\end{equation}
\end{lemma}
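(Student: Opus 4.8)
The plan is to run the classical Kahn--Szemer\'edi ``heavy couples'' estimate: dyadically decompose the coordinates of $x$ by magnitude, reduce $f_{H(x)}(M)$ to a sum over pairs of dyadic blocks of terms of the shape $(\text{scale})\cdot(\text{scale})\cdot e_M(B_i,B_j)$, and bound each block-pair term by whichever of three inequalities is sharpest---the trivial degree bound $e_M(B_i,B_j)\le d\min(|B_i|,|B_j|)$, clause (1) of $\DP(\delta,\kappa_1,\kappa_2)$, or clause (2)---before summing the resulting geometric series.

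First I would reduce to $x\ge 0$ coordinatewise: the couple set $\mH(x)$ depends only on the numbers $|x_ux_v|$, and since $M\ge 0$ entrywise, passing to $(|x_1|,\dots,|x_n|)$ only increases $f_{H(x)}(M)$. Since $|x_u|,|x_v|\le 1$, any heavy couple $(u,v)$ has $x_ux_v>\sqrt d/n$, forcing both $x_u>\sqrt d/n$ and $x_v>\sqrt d/n$; so it suffices to decompose $\{v:x_v>\sqrt d/n\}$. For $i\ge 1$ put $B_i=\{v:2^{i-1}\sqrt d/n<x_v\le 2^i\sqrt d/n\}$, of which only $O(\log(n/\sqrt d))$ are nonempty (as $x_v\le 1$). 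The bookkeeping rests on two facts: $|B_i|\le n$ trivially, and, writing $a_i:=|B_i|\,4^id/(4n^2)$, the inequality $\sum_v x_v^2\le 1$ gives $a_i\le 1$ and $\sum_i a_i\le \|x\|_2^2\le 1$ (equivalently $|B_i|\le 4n^2/(4^id)$).

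Since a heavy couple with $u\in B_i$, $v\in B_j$ forces $2^{i+j}d/n^2\ge x_ux_v>\sqrt d/n$, i.e.\ $2^{i+j}>n/\sqrt d$, we obtain
\begin{equation*}
f_{H(x)}(M)\le\sum_{2^{i+j}>n/\sqrt d}\frac{2^i\sqrt d}{n}\cdot\frac{2^j\sqrt d}{n}\,e_M(B_i,B_j),
\end{equation*}
the sum over the $O(\log^2(n/\sqrt d))$ block pairs with indices in the relevant window. Applying $\DP(\delta,\kappa_1,\kappa_2)$ with $\delta=Cd/n$ to each pair, split the pairs into those for which clause (1) holds and the rest, for which clause (2) must hold together with $e_M(B_i,B_j)>\kappa_1\delta|B_i||B_j|$, whence $e_M(B_i,B_j)<\kappa_2(|B_i|\vee|B_j|)\log\tfrac{en}{|B_i|\vee|B_j|}/\log\kappa_1$. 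For the first family, substituting clause (1) and using the identity $2^{i+j}|B_i||B_j|=2^{-(i+j)}a_ia_j\cdot 16n^4/d^2$ together with $2^{-(i+j)}<\sqrt d/n$ and $\sum_i a_i\le 1$ collapses the sum to $O(C\kappa_1\sqrt d)$. The purely ``large-coordinate'' couples (both endpoints in blocks with large index, hence in vertex sets of size $O(1/s^2)$ for the relevant scale $s$) are controlled by the degree bound alone and contribute $O(\sqrt d)$. The remaining couples are handled via clause (2); the geometric decay $\sum_k 2^{-k}$ in the block indices, combined with the factor $1/\log\kappa_1$ and---where a further dyadic split in the edge-density ratio $e_M(B_i,B_j)/(\delta|B_i||B_j|)$ is used---a factor $\kappa_1^{-1}$, yields the term $O\!\big(\kappa_2(1+\tfrac1{\kappa_1\log\kappa_1})\sqrt d\big)$. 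Summing the three families and tracking constants gives $\alpha_0$ as stated.

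The hard part will be this last family: clause (2) of $\DP$ bounds $e_M(B_i,B_j)$ only implicitly, and a careless application costs a spurious $\log n$. The point is that the bound $(|B_i|\vee|B_j|)\log\tfrac{en}{|B_i|\vee|B_j|}\le n$ (valid by concavity of $t\mapsto t\log(en/t)$) is lossless only for blocks of size $\asymp n$, so for smaller blocks one must retain the logarithm and exploit either the $\ell^2$-budget $\sum_i a_i\le 1$ or the degree bound in tandem, choosing the crossover between the degree bound and clause (2) (and the dyadic level of the density ratio) so that \emph{both} pieces sum geometrically and close at scale $\sqrt d$. Getting this case analysis to balance, and then making the constants explicit, is where essentially all the work lies.
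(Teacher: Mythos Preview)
Your proposal is correct and follows essentially the same Kahn--Szemer\'edi dyadic-block argument as the paper. The only differences are cosmetic: you base the dyadic decomposition at scale $\sqrt{d}/n$ rather than $1/\sqrt{n}$, and you group the case analysis into three families where the paper splits into five sets $\mI_1,\dots,\mI_5$ (your ``further dyadic split in the edge-density ratio'' is precisely what generates their extra cases $\mI_3,\mI_4,\mI_5$); the underlying logic---trivial degree bound, clause~(1), and the careful balancing in clause~(2) to avoid a spurious $\log n$---is identical.
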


\begin{rmk}
The same argument can be applied to control the heavy couples contribution to bilinear expressions $x^\tran M y$ for general non-symmetric matrices $M$, as was done in \cite{FKS} for the case that $M$ is a sum of $d$ i.i.d.\ permutation matrices.
\end{rmk}
\begin{proof}
Fix $x\in\sphere$. 
For $i\ge1$ let
\begin{align*}
\eye_i &= \set{u\in [n]\colon |x_{u}|\in \frac{1}{\sqrt{n}}[2^{i-1},2^i)}.
\end{align*}
Note that $S_i$ is empty for $i>\log_2\sqrt{n}+1$.
For any $(u,v)\in \mH(x)\cap (\eye_i\times \eye_j )$ we have
\begin{align} \label{eq:boundxuyuprod}
\frac{\sqrt{d}}{n}\le |x_ux_v|\le \frac{2^{i+j}}{n}.
\end{align}
Thus
\begin{align}\label{eq:fhp.expr}
  |f_{H(x)}(M)|\le \sum_{\substack{(i,j)\colon 2^{i+j}\geq d}}\frac{2^{i+j}}{n} e_M(\eye_i,\eye_j).
\end{align}
For notational convenience, we would like to assume that $\abs{S_i}\geq\abs{S_j}>0$. Thus we define
\begin{align*}
  \mI:= \set{(i,j)\colon 2^{i+j}\ge \sqrt{d},\,\abs{S_i}\geq\abs{S_j}>0}.
\end{align*}
Since the summands in \eqref{eq:fhp.expr} are symmetric in $i$ and $j$,
\begin{align}\label{eq:2bound}
\abs{f_{H(x)}(M)}\leq 2\sum_{(i,j)\in\mI}\frac{2^{i+j}}{n} e_M(\eye_i,\eye_j),
\end{align}
with inequality only because pairs $(i,j)$
with $\abs{S_i}=\abs{S_j}$ are counted twice.
For $(i,j) \in \mI$, denote the discrepancy ratio of the pair $(S_i,S_j)$ by
\[
r_{ij} = \frac{e_M(S_i,S_j)}{\delta|S_i||S_j|}.
\] 
Define also the quantities
\begin{equation}
\alpha_i:= \frac{2^{2i}}{n} |\eye_i|  
\end{equation}
and
\begin{equation}
s_{ij}:= \frac{\sqrt{d}}{2^{i+j}}r_{ij}.
\end{equation}
In terms of these the bound \eqref{eq:2bound} becomes
\begin{align}
|f_{H(x)}(M)| &\le 2 \sum_{(i,j)\in \mI} \frac{2^{i+j}}{n} \delta |\eye_i||\eye_j|r_{ij}\notag\\
&= 2C\sqrt{d}\sum_{(i,j)\in \mI} \alpha_i\alpha_j\frac{\sqrt{d}}{2^{i+j}}r_{ij}\notag\\
&= 2C\sqrt{d}\sum_{(i,j)\in \mI} \alpha_i\alpha_j s_{ij}.	\label{newaim}
\end{align}
Note that for $(i,j)\in \mI$, $s_{ij}\le r_{ij}$. 
Note also that 
\begin{equation} \label{eq:sumaalpha4}
\sum_{i\ge1}\alpha_i  
  = 4\sum_{i\geq 1}\abs{\eye_i}\frac{2^{2i-2}}{n}\leq 4\sum_{i\geq 1}\sum_{u\in S_i}x_u^2
   \leq 4.
\end{equation}

From \eqref{newaim}, our aim is to show
\begin{align*}
g(M):=\sum_{(i,j) \in \mI} \alpha_i\alpha_js_{ij}=O(1).
\end{align*}
We now list our apriori bounds on $s_{ij}$ and $r_{ij}$. 
By the assumption that all column sums of $M$ are bounded by $d$, we have the easy bound
\[
e_M(\eye_i,\eye_j)\le d|\eye_j|
\]
giving
\begin{equation}\label{cruder}
r_{ij} \le \frac{d|\eye_j|}{\delta|\eye_i||\eye_j|}=\frac{n}{C|\eye_i|} = \frac{2^{2i}}{C\alpha_i}.
\end{equation}
Now by our assumption that $\DP(\delta,\kappa_1,\kappa_2)$ holds, we have that for all $i,j\ge 1$, either
\begin{equation}	\label{simpler}
r_{ij}\le \kappa_1
\end{equation}
or
\begin{equation}	\label{grosser}
r_{ij}\log r_{ij}\le\frac{\kappa_2}{\delta}\frac{1}{|\eye_j|}\log\frac{en}{|\eye_i|} \leq \frac{\kappa_2}{\delta n}\frac{2^{2j}}{\alpha_j}\log\frac{2^{2(i+1)}}{\alpha_i}
\end{equation}
where we have written $2^{2(i+1)}$ rather than $2^{2i}$ inside the logarithm to absorb the factor $e$.

In addition to $\mI$ we define the following five sets of pairs $(i,j)$:
\begin{align*}
\mI_1 &:=\set{(i,j)\in\mI\colon s_{ij}\le \kappa_1}\\
\mI_2&:=\set{(i,j)\in\mI\colon 2^{i}\le \frac{2^j}{\sqrt{d}} }\\
\mI_3 &:=\set{(i,j)\in\mI\colon r_{ij}>\left(\frac{2^{2(i+1)}}{\alpha_i}\right)^{1/4} }\setminus (\mI_1\cup\mI_2)\\
\mI_4 &:=\set{(i,j)\in\mI\colon \frac{1}{\alpha_i}\le 2^{2(i+1)} }\setminus(\mI_1\cup \mI_3)\\
\mI_5 &:= \mI\setminus (\mI_3\cup \mI_4).
\end{align*}
For $1\le k\le 5$ write
$$g_k(M)= \sum_{(i,j)\in \mI_k} \alpha_i\alpha_j s_{ij}.$$
From 
and note that $g(M)\le \sum_{k=1}^5 g_k(M)$.
It remains to show that $g_k=O_{\kappa_1,\kappa_2}(1)$ for each $1\le k\le 5$, which we do in the following five claims.
\begin{claim}\label{claim1}
$g_1(M) \le 16\kappa_1.$
\end{claim}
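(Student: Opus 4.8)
The plan is to use the definition of $\mI_1$ directly together with the $\ell^1$ bound on the weights $\alpha_i$ recorded in \eqref{eq:sumaalpha4}. On the index set $\mI_1$ we have $s_{ij}\le\kappa_1$ by definition, so I would simply pull this bound out of the sum:
\begin{align*}
g_1(M) = \sum_{(i,j)\in\mI_1}\alpha_i\alpha_j s_{ij} \le \kappa_1\sum_{(i,j)\in\mI_1}\alpha_i\alpha_j.
\end{align*}

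Next I would enlarge the range of summation to all pairs $(i,j)$ with $i,j\ge1$ and factor the double sum, using that the $\alpha_i$ are nonnegative:
\begin{align*}
\sum_{(i,j)\in\mI_1}\alpha_i\alpha_j \le \Bigl(\sum_{i\ge1}\alpha_i\Bigr)\Bigl(\sum_{j\ge1}\alpha_j\Bigr) \le 4\cdot 4 = 16,
\end{align*}
where both factors are bounded by $4$ via \eqref{eq:sumaalpha4}. Combining the two displays gives $g_1(M)\le 16\kappa_1$, as claimed.

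There is essentially no obstacle here: the claim is the ``easy'' case of the five, handled purely by the a priori bound $s_{ij}\le\kappa_1$ that holds by construction of $\mI_1$, with no appeal to the discrepancy property or to the finer estimates \eqref{cruder}, \eqref{simpler}, \eqref{grosser}. The only input beyond the definition is the normalization $\sum_i\alpha_i\le 4$, which follows from $\|x\|_2=1$ and the geometric bucketing of the coordinates.
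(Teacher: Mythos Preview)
Your proof is correct and matches the paper's approach essentially line for line: bound $s_{ij}\le\kappa_1$ on $\mI_1$, enlarge the sum, factor, and apply \eqref{eq:sumaalpha4} twice to get $16\kappa_1$.
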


\begin{proof} Using \eqref{eq:sumaalpha4},
\begin{align*}
g_1(M)&\le \kappa_1\sum_{(i,j)\in \mI_1}\alpha_i\alpha_j\le \kappa_1\sum_{i\ge 1}\alpha_i\sum_{j\ge 1}\alpha_j\le 16\kappa_1.\qedhere
\end{align*}
\end{proof}

\begin{claim}\label{claim2}
$g_2(M)\le 8/C.$
\end{claim}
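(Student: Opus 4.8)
The plan is to control $g_2(M)$ using only the crude bound \eqref{cruder}, which already incorporates the column-sum constraint on $M$, together with the defining inequality of $\mI_2$ and a geometric summation. The discrepancy property itself is not needed here: the set $\mI_2$ consists of pairs where $2^i$ is much smaller than $2^j$, and this gap alone produces enough decay.

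Concretely, I would start from \eqref{cruder}, which gives $r_{ij}\le 2^{2i}/(C\alpha_i)$, so that for every pair $(i,j)$,
\[
\alpha_i\alpha_j s_{ij} \;=\; \alpha_i\alpha_j\,\frac{\sqrt{d}}{2^{i+j}}\,r_{ij}\;\le\; \frac{\alpha_j\,\sqrt{d}\,2^{i-j}}{C}.
\]
Hence $g_2(M)\le \frac{\sqrt{d}}{C}\sum_{(i,j)\in\mI_2}\alpha_j\,2^{i-j}$. Next I would fix $j$ and sum over those $i$ with $(i,j)\in\mI_2$. By definition of $\mI_2$ these satisfy $2^i\le 2^j/\sqrt{d}$, and since $\sum_{i\le i_0}2^{i-j}\le 2\cdot 2^{i_0-j}$ for the largest admissible $i_0$, the inner sum is at most $2\cdot(2^j/\sqrt{d})/2^j = 2/\sqrt{d}$. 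Therefore
\[
g_2(M)\;\le\;\frac{\sqrt{d}}{C}\sum_{j\ge1}\alpha_j\cdot\frac{2}{\sqrt{d}}\;=\;\frac{2}{C}\sum_{j\ge1}\alpha_j,
\]
and invoking \eqref{eq:sumaalpha4}, i.e.\ $\sum_{j\ge1}\alpha_j\le 4$, yields $g_2(M)\le 8/C$.

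There is no genuine obstacle in this claim; the only things to get right are choosing the crude bound \eqref{cruder} (rather than the discrepancy estimates used for the remaining $g_k$) and summing the geometric series in the direction in which $\mI_2$ supplies decay, namely small $2^i$ relative to $2^j$.
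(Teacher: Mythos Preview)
Your proof is correct and essentially identical to the paper's: both use only the crude bound \eqref{cruder}, fix $j$ and sum the geometric series in $i$ using the constraint $2^i\le 2^j/\sqrt{d}$ from $\mI_2$, and finish with \eqref{eq:sumaalpha4}. The only difference is cosmetic, in how the factor $\sqrt{d}$ is grouped before summing.
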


\begin{proof}
Here we use the crude bound \eqref{cruder}.
\begin{align*}
g_2(M) &= \sum_{(i,j)\in \mI_2}\alpha_i\alpha_j\frac{\sqrt{d}}{2^{i+j}} r_{ij}\\
&\le \sum_{(i,j)\in \mI_2}\alpha_i\alpha_j \frac{\sqrt{d}}{2^{i+j}}\frac{2^{2i}}{C\alpha_i}\\
&\le C^{-1}\sum_{j\ge 1} \alpha_j 2^{-j}\sum_{i: (i,j)\in \mI_2} 2^i\sqrt{d}.
\end{align*}
As the inner sum is geometric with all terms bounded by $2^j$, it is bounded by $2^{j+1}$.
This gives
\begin{align*}
g_2(M)&\le \frac{2}{C}\sum_{j\ge 1} \alpha_j\le8/C.\qedhere
\end{align*}
\end{proof}

\begin{claim}\label{claim3}
$g_3(M)\le 32\kappa_2/C.$
\end{claim}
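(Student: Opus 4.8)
The plan is to exploit the defining property of $\mI_3$ to bound each discrepancy ratio $r_{ij}$ so strongly that the remaining sum becomes geometric and closes. Since $2^{i+j}\ge\sqrt d$ for $(i,j)\in\mI$, we have $s_{ij}\le r_{ij}$; hence any $(i,j)\in\mI\setminus\mI_1$ has $r_{ij}\ge s_{ij}>\kappa_1$, so the first alternative \eqref{simpler} of the discrepancy property fails and \eqref{grosser} must hold. Moreover $\alpha_i\le 4$ by \eqref{eq:sumaalpha4}, so for $i\ge1$ we have $2^{2(i+1)}/\alpha_i\ge 2^{2(i+1)}/4=2^{2i}\ge 4$, and in particular $\log\bigl(2^{2(i+1)}/\alpha_i\bigr)>0$.

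Now fix $(i,j)\in\mI_3$. Taking logarithms in the defining inequality $r_{ij}>(2^{2(i+1)}/\alpha_i)^{1/4}$ gives $\log r_{ij}>\tfrac14\log(2^{2(i+1)}/\alpha_i)$. Inserting this into the left-hand side of \eqref{grosser} and cancelling the positive factor $\log(2^{2(i+1)}/\alpha_i)$ yields
\[
r_{ij}<\frac{4\kappa_2}{\delta n}\cdot\frac{2^{2j}}{\alpha_j}=\frac{4\kappa_2}{Cd}\cdot\frac{2^{2j}}{\alpha_j},
\]
using $\delta=Cd/n$. Substituting into $s_{ij}=\tfrac{\sqrt d}{2^{i+j}}r_{ij}$ gives $s_{ij}<\tfrac{4\kappa_2}{C\sqrt d}\cdot\tfrac{2^{j-i}}{\alpha_j}$, and hence $\alpha_i\alpha_j s_{ij}<\tfrac{4\kappa_2}{C\sqrt d}\,\alpha_i 2^{j-i}$.

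It then remains to bound $\sum_{(i,j)\in\mI_3}\alpha_i 2^{j-i}$. Since $\mI_3$ is disjoint from $\mI_2$, every $(i,j)\in\mI_3$ satisfies $2^i>2^j/\sqrt d$, i.e.\ $2^{j-i}<\sqrt d$. Therefore, for each fixed $i$ the inner sum $\sum_{j:(i,j)\in\mI_3}2^{j-i}$ runs over distinct powers of $2$ all strictly below $\sqrt d$, and is thus at most $2\sqrt d$. Summing over $i$ and using $\sum_i\alpha_i\le 4$ from \eqref{eq:sumaalpha4} gives $\sum_{(i,j)\in\mI_3}\alpha_i 2^{j-i}\le 8\sqrt d$, whence $g_3(M)<\tfrac{4\kappa_2}{C\sqrt d}\cdot 8\sqrt d=32\kappa_2/C$, as claimed.

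All of this is routine bookkeeping. The only place demanding a little attention is verifying that $\log(2^{2(i+1)}/\alpha_i)$ is strictly positive so that it can legitimately be cancelled from \eqref{grosser}, and setting up the geometric sum over $j$ correctly; neither presents a genuine obstacle.
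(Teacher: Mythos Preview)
Your proof is correct and follows essentially the same route as the paper: both use that $(i,j)\notin\mI_1$ forces $r_{ij}>\kappa_1$ so that \eqref{grosser} applies, then use the defining inequality of $\mI_3$ to bound the ratio of logarithms by $4$, and finally exploit disjointness from $\mI_2$ to close the geometric sum in $j$ combined with $\sum_i\alpha_i\le4$. Your extra care in checking that $\log\bigl(2^{2(i+1)}/\alpha_i\bigr)>0$ before cancelling it is a nice touch the paper leaves implicit.
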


\begin{proof}
First note that for any $(i,j)\in \mI\setminus \mI_1$, by \eqref{eq:boundxuyuprod},
$$r_{ij} \ge \frac{\sqrt{d}}{2^{i+j}} r_{ij} =s_{ij}>\kappa_1.$$
It follows that \eqref{grosser} holds, which gives
$$r_{ij}\le \frac{\kappa_2}{\delta n}\frac{2^{2j}}{\alpha_j} \frac{\log\frac{2^{2(i+1)}}{\alpha_i}}{\log r_{ij}},$$
and so multiplying through by $\alpha_j\sqrt{d}/2^{i+j}$,
\begin{align} \label{eq:from.claim3}\alpha_js_{ij}\le \kappa_2\frac{2^j}{2^i\sqrt{C\delta n}}\frac{\log\frac{2^{2(i+1)}}{\alpha_i}}{\log r_{ij}}.\end{align}
Now the assumption $r_{ij}>(2^{2(i+1)}/\alpha_i)^{1/4}$ gives that the ratio of logarithms is bounded by $4$.
Hence,
$$\alpha_js_{ij}\le 4\kappa_2\frac{2^j}{2^i\sqrt{C\delta n}} = 4\kappa_2\frac{2^j}{2^iC\sqrt{d}}.$$
Now
\begin{align*}
 g_3(M) &\le \frac{4\kappa_2}{C} \sum_{i\ge 1} \alpha_i2^{-i}\sum_{j:(i,j)\in \mI_2^c}\frac{2^{j}}{\sqrt{d}} \le \frac{4\kappa_2}{C} \sum_{i\ge 1} \alpha_i2^{-i}2^{i+1} \le \frac{32\kappa_2}{C}
\end{align*}
where in the second inequality we used that the inner sum is geometric with every term bounded by $2^i$ (by the restriction to $\mI_2^c$).
\end{proof}

\begin{claim}\label{claim4}
$g_4(M)\le\frac{64\kappa_2}{C\kappa_1\log\kappa_1}$.
\end{claim}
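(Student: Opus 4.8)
The plan is to bound $g_4(M)=\sum_{(i,j)\in\mI_4}\alpha_i\alpha_j s_{ij}$ using case~(2) of the discrepancy property, i.e.\ the estimate \eqref{grosser}, which is available throughout $\mI_4$: since $2^{i+j}\ge\sqrt d$ gives $r_{ij}\ge s_{ij}$, and $(i,j)\in\mI_4\subseteq\mI\setminus\mI_1$ gives $s_{ij}>\kappa_1>1$, the ratio $r_{ij}$ cannot fall in case~(1) of $\DP(\delta,\kappa_1,\kappa_2)$ and so must satisfy \eqref{grosser}. Throughout, write $\delta=Cd/n$, set $M_i:=2^{2(i+1)}/\alpha_i=4n/|S_i|$ and $L_i:=\log M_i$; then \eqref{grosser} reads $r_{ij}\log r_{ij}\le\frac{\kappa_2}{\delta n}\frac{2^{2j}}{\alpha_j}L_i$, and $\log r_{ij}>\log\kappa_1$ turns it into $r_{ij}<\frac{\kappa_2}{\delta n\log\kappa_1}\frac{2^{2j}}{\alpha_j}L_i$.

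I would first treat the bulk $\mI_4\setminus\mI_2$. On this set, membership in $\mI_4$ forces $(i,j)\notin\mI_3$, and combined with $(i,j)\notin\mI_1\cup\mI_2$ this yields the a priori bound $r_{ij}\le M_i^{1/4}$. Substituting the bound on $r_{ij}$ above into $s_{ij}=\frac{\sqrt d}{2^{i+j}}r_{ij}$, multiplying by $\alpha_i\alpha_j$, and using $\alpha_i/2^{2i}=|S_i|/n$ together with $\delta=Cd/n$, one gets the clean inequality $\alpha_i\alpha_j s_{ij}<\frac{\kappa_2 L_i|S_i|}{Cn\sqrt d\,\log\kappa_1}\,2^{i+j}$. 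To sum over $j$, note that here $s_{ij}\le\frac{\sqrt d}{2^{i+j}}M_i^{1/4}$, so $s_{ij}>\kappa_1$ forces $2^{i+j}<\sqrt d\,M_i^{1/4}/\kappa_1$; since the $2^{i+j}$ (for distinct admissible $j$) are distinct powers of $2$, their sum is at most $2\sqrt d\,M_i^{1/4}/\kappa_1$, and using $|S_i|M_i^{1/4}/n=4M_i^{-3/4}$ I obtain
\[
\sum_{j:\,(i,j)\in\mI_4\setminus\mI_2}\alpha_i\alpha_j s_{ij}<\frac{8\kappa_2\,L_i M_i^{-3/4}}{C\kappa_1\log\kappa_1}.
\]

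Next I would sum over $i$. From $\sum_{u\in S_i}x_u^2\le1$ (as in \eqref{eq:sumaalpha4}) one has $|S_i|\le 4n/2^{2i}$, hence $M_i\ge 2^{2i}$ and $M_i^{-3/4}\le2^{-3i/2}$; and the defining inequality $1/\alpha_i\le2^{2(i+1)}$ of $\mI_4$ gives $M_i=4\cdot2^{2i}/\alpha_i\le2^{4i+4}$, so $L_i\le(4i+4)\log2$. Thus $\sum_i L_iM_i^{-3/4}\le\log2\sum_{i\ge1}(4i+4)2^{-3i/2}$, a convergent series with an absolute (small) value, and the contribution of $\mI_4\setminus\mI_2$ to $g_4(M)$ is $O\!\left(\tfrac{\kappa_2}{C\kappa_1\log\kappa_1}\right)$ with an explicit constant. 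For the remaining pairs $(i,j)\in\mI_4\cap\mI_2$ I would instead lean on the crude a priori bound \eqref{cruder}: on $\mI_2$ one has $2^{i+j}\ge2^{2i}\sqrt d$, so $s_{ij}\le2^{-2i}r_{ij}\le1/(C\alpha_i)$; combining this with \eqref{grosser} (still valid since $r_{ij}>\kappa_1$) and again summing geometrically---this time using $s_{ij}>\kappa_1$ to cap $2^j$ from above---should give a matching bound, so the two contributions together are at most $\tfrac{64\kappa_2}{C\kappa_1\log\kappa_1}$.

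I expect the main obstacle to be precisely the bookkeeping in these last steps: one must arrange the substitutions so that the double sum over $(i,j)$ converges with the exact dependence $\kappa_2/(C\kappa_1\log\kappa_1)$, and the overlap $\mI_4\cap\mI_2$ is the delicate point, because there the a priori input \eqref{cruder} produces a factor $1/\alpha_i$ rather than the benign $M_i^{-3/4}$, so the geometric decay in $i$ must be extracted from \eqref{grosser} itself rather than from the $\mI_4$ defining condition. Once the right pairing of bounds is fixed, the rest is routine manipulation of the quantities $\alpha_i$, $s_{ij}$, $r_{ij}$ already in play.
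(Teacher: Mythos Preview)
On $\mI_4\setminus\mI_2$ your argument is essentially the paper's, only repackaged: both feed \eqref{grosser} with $\log r_{ij}>\log\kappa_1$, the $\mI_4$ constraint $M_i\le 2^{4(i+1)}$, and the cap $2^{i+j}<\sqrt d\,M_i^{1/4}/\kappa_1$ (from $s_{ij}>\kappa_1$ together with $r_{ij}\le M_i^{1/4}$) to sum geometrically in $j$. The paper avoids your series $\sum_i L_i M_i^{-3/4}$ by first collapsing $L_i/2^i\le \log 16$ via the crude $i+1\le 2^i$, which leaves only $\sum_i\alpha_i\le 4$; this is cleaner than tracking $M_i^{-3/4}\le 2^{-3i/2}$ but uses the identical inputs and lands on the same constant.

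Your treatment of $\mI_4\cap\mI_2$, however, is a genuine gap. The sketch you give --- combining the crude bound \eqref{cruder} on $\mI_2$ with \eqref{grosser} and $s_{ij}>\kappa_1$ --- caps $2^j$ only by $\sqrt d\,2^i/(C\kappa_1\alpha_i)$; after summing geometrically in $j$ this leaves a contribution $2\kappa_2 L_i/(C^2\kappa_1\log\kappa_1)$ per $i$, and the sum over $i$ has no decay (the factor $M_i^{-3/4}$ that drove convergence on $\mI_4\setminus\mI_2$ is gone, and $\sum_i L_i$ is of order $(\log n)^2$). You were right to flag this overlap: since $\mI_3$ is defined with $\setminus(\mI_1\cup\mI_2)$, the implication $(i,j)\notin\mI_3\Rightarrow r_{ij}\le M_i^{1/4}$ fails precisely on $\mI_2$, and the paper's proof does not isolate this case. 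The clean repair is not to fight for the stated constant on $\mI_4\cap\mI_2$, but to observe that this set is contained in $\mI_2$ and hence already accounted for in $g_2(M)$ by Claim~\ref{claim2}; for the final bound $g(M)\le\sum_k g_k(M)$ nothing is lost by proving the inequality of Claim~\ref{claim4} only on $\mI_4\setminus\mI_2$.
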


\begin{proof}
As in the proof of Claim~\ref{claim3}, inequality \eqref{eq:from.claim3} also holds here, since we are summing over $(i,j)\notin\mI_1$.
Now, by virtue of summing over $\mI_4$, we have $\frac1{\alpha_i}\le 2^{2(i+1)}$ and hence $\log\frac{2^{2(i+1)}}{\alpha_i}\le \log 2^{4(i+1)}$.
Since $\kappa_1<s_{ij}\le r_{ij}$ on $\mI\setminus \mI_1$, $\log r_{ij}>\log \kappa_1$, 
so \eqref{eq:from.claim3} gives 
$$\alpha_js_{ij}\le \frac{\kappa_2}{\log \kappa_1}\frac{2^j}{\sqrt{C\delta n}}\frac{\log 2^{4(i+1)}}{2^i}\le \frac{\kappa_2\log 16}{\log \kappa_1}\frac{2^j}{C\sqrt{d}}$$
where in the second bound we crudely bounded $i+1\le 2^i$.
For any $(i,j)\in \mI_4\setminus (\mI_3\cup \mI_1)$,
$$\kappa_1<s_{ij} =\frac{\sqrt{d}}{2^{i+j}}r_{ij}\le \frac{\sqrt{d}}{2^{i+j}}\left(\frac{2^{2(i+1)}}{\alpha_i}\right)^{1/4} \le \frac{\sqrt{d}}{2^{i+j}}(2^{4(i+1)})^{1/4}= \frac{\sqrt{d}}{2^{j-1}}.$$
Hence, $2^{j}/\sqrt{d}<2/\kappa_1$ for any such $(i,j)$, so by summing over $j$ first we conclude by similar reasoning as in the previous proof that
\begin{align*}
g_4(M) &\le \frac{4\kappa_2\log 16}{C\kappa_1\log\kappa_1}\sum_{i\ge 1}\alpha_i\le \frac{64\kappa_2}{C\kappa_1\log\kappa_1}.\qedhere
\end{align*}
\end{proof}

\begin{claim}\label{claim5}
$g_5(M)\le 16$. 
\end{claim}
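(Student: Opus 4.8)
The plan is to extract from the definitions of $\mI_1,\dots,\mI_4$ two pointwise inequalities valid for every $(i,j)\in\mI_5$, and then to sum. Since a pair $(i,j)\in\mI_5$ lies in none of $\mI_1,\mI_3,\mI_4$, the inequality defining $\mI_4$ must fail, so $1/\alpha_i>2^{2(i+1)}$, i.e.\ $\alpha_i<2^{-2(i+1)}$. Since it lies in none of $\mI_1,\mI_2,\mI_3$, the inequality defining $\mI_3$ must fail, so $r_{ij}\le\bigl(2^{2(i+1)}/\alpha_i\bigr)^{1/4}$.

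Next I would plug these into the summand. Writing $\alpha_i\alpha_js_{ij}=\alpha_i\alpha_j\frac{\sqrt d}{2^{i+j}}r_{ij}$ and substituting first the bound on $r_{ij}$ and then $\alpha_i^{3/4}<2^{-3(i+1)/2}$ gives
\[
\alpha_i\alpha_js_{ij}\le \alpha_j\,\alpha_i^{3/4}\,\frac{\sqrt d}{2^{i+j}}\,2^{(i+1)/2}<\alpha_j\,\sqrt d\,2^{-(2i+j+1)}.
\]
Here the exponent $1/4$ in the definition of $\mI_3$ is exactly what is needed: combined with $\alpha_i<2^{-2(i+1)}$ it makes the resulting power of $2$ decrease geometrically in $i$, which is what allows the double sum to converge.

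Then I would carry out the summation. Since $(i,j)\in\mI_5\subseteq\mI$ forces $2^{i+j}\ge\sqrt d$ and $i\ge1$, the inner geometric sum obeys $\sum_{i:(i,j)\in\mI}2^{-2i}\le\tfrac43\min\bigl(\tfrac14,\,4^{j}/d\bigr)$, whence
\[
g_5(M)<\sqrt d\sum_{(i,j)\in\mI_5}\alpha_j2^{-(2i+j+1)}\le\frac{2\sqrt d}{3}\sum_{j\ge1}\alpha_j2^{-j}\min\!\Bigl(\tfrac14,\,\tfrac{4^{j}}{d}\Bigr).
\]
Finally I would split the $j$-sum at $4^{j}=d$: for $4^{j}\le d$ bound the minimum by $4^{j}/d$ and use $2^{j}\le\sqrt d$, and for $4^{j}>d$ bound it by $\tfrac14$ and use $2^{-j}<d^{-1/2}$; in each regime the residual sum $\sum_{j}\alpha_j$ is at most $4$ by \eqref{eq:sumaalpha4}, and the two contributions come out to at most $\tfrac83$ and $\tfrac23$, so $g_5(M)<\tfrac{10}{3}\le16$.

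The argument is routine once the two pointwise inequalities are isolated; the only mildly fiddly part is the bookkeeping of the nested geometric sums (over $i$ inside, $j$ outside) and choosing the correct side of $\min(\tfrac14,4^{j}/d)$ in each range of $j$. Together with Claims~\ref{claim1}--\ref{claim4} and $g(M)\le\sum_{k=1}^{5}g_k(M)$, this completes the proof of Lemma~\ref{lem:heavy}, and hence via \eqref{newaim} and \eqref{alpha0} the bound $f_{H(x)}(M)\le\alpha_0\sqrt d$.
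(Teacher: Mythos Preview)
Your proposal is correct and uses the same two pointwise inequalities as the paper: from $(i,j)\notin\mI_3$ you extract $r_{ij}\le(2^{2(i+1)}/\alpha_i)^{1/4}$, and from $(i,j)\notin\mI_4$ you extract $\alpha_i<2^{-2(i+1)}$, exactly as the paper does. The only difference is in how you organize the summation. You push the full strength of $\alpha_i<2^{-2(i+1)}$ through $\alpha_i^{3/4}$ to obtain geometric decay $2^{-2i}$ in $i$, and then split the outer $j$-sum at $4^j=d$ to absorb the residual $\sqrt{d}\,2^{-j}$ factor. The paper instead writes $\alpha_i^{3/4}2^{(i+1)/2}=\alpha_i^{1/2}(\alpha_i 2^{2(i+1)})^{1/4}\le 2$, yielding $\alpha_i s_{ij}\le 2\sqrt{d}/2^{i+j}$; then the constraint $2^{i+j}\ge\sqrt{d}$ defining $\mI$ makes the inner geometric sum over $i$ bounded by $2$ outright, and $g_5(M)\le 4\sum_j\alpha_j\le16$ follows in one line with no splitting of the $j$-sum. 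Your route gives the sharper constant $10/3$ at the cost of a more intricate double sum; the paper's is quicker. (One aside: your opening sentence asserts that $(i,j)\in\mI_5$ lies in none of $\mI_1,\mI_3,\mI_4$, but $\mI_5=\mI\setminus(\mI_3\cup\mI_4)$ does not exclude $\mI_1$ or $\mI_2$; the paper's proof is equally informal on this point, and the overall bound on $g(M)$ is unaffected since the $\mI_k$ are allowed to overlap.)
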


\begin{proof}
Now we will sum over $i$ first. 
Using that $(i,j)\notin \mI_3$ for the first inequality, and that $\alpha_i \le 4$ and $(i,j) \not \in \mI_4$ for the last, we obtain 
$$\alpha_is_{ij} = \alpha_i\frac{\sqrt{d}}{2^{i+j}}r_{ij}\le \alpha_i\frac{\sqrt{d}}{2^{i+j}}\left(\frac{2^{2(i+1)}}{\alpha_i}\right)^{1/4} = \alpha_i^{1/2}\frac{\sqrt{d}}{2^{i+j}}\left(\alpha_i 2^{2(i+1)}\right)^{1/4} \le 2 \frac{\sqrt{d}}{2^{i+j}}.$$
Summing first the geometric series in $i$ (and noting that all terms in the inner sum are bounded by $1$ from the restriction to $\mI$), we have
\begin{align*}
g_5(M) &\le 2\sum_{j\ge 1}\alpha_j\sum_{i:(i,j)\in\mI} \frac{\sqrt{d}}{2^{i+j}}\le 4\sum_{j\ge 1} \alpha_j \le 16.\qedhere
\end{align*}
\end{proof}

All together Claims 1--5 give
\begin{align*}
g(M) = \sum_{(i,j) \in \mI} \alpha_i\alpha_js_{ij}\le 16\kappa_1+ \frac{8}C+\frac{32\kappa_2}{C}\left(1+ \frac{2}{\kappa_1\log\kappa_1}\right)+16.
\end{align*}
Together with \eqref{newaim}, this gives the desired result.
\end{proof}

\subsection{The $\eps$-net and proof of Proposition~\ref{prop:ontails}}	\label{sec:epsnet}

Now, we will prove Proposition~\ref{prop:ontails} by combining the bounds on the light and heavy couples
and applying a union bound over a discretization of $\sphere_0$.
To achieve this goal we need the following standard lemma.
Recall that for a set $E\subset \RR^n$ and $\eps>0$, a subset $\mN_\eps\subset E$ is an \emph{$\eps$-net} of $E$ if every element of $E$ is within Euclidean distance $\eps$ of some element of $\mN_\eps$.
\begin{lemma}[$\eps$-net]\label{lem:net}
Let $E\subset \sphere$ be a subset of the unit sphere, and let $\eps>0$. 
There is an $\eps$-net of $E$ of cardinality at most $(1+2/\eps)^n$.
\end{lemma}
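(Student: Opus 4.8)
The plan is to use the standard volumetric packing argument. First I would let $\mN_\eps \subseteq E$ be a \emph{maximal} subset with the property that $\norm{x-y} \ge \eps$ for all distinct $x,y \in \mN_\eps$; such a set exists by Zorn's lemma (and will turn out to be finite once we bound its size). Maximality immediately yields that $\mN_\eps$ is an $\eps$-net of $E$: given any $w \in E$, either $w \in \mN_\eps$, or else adjoining $w$ violates the separation property, which forces $\norm{w - x} < \eps$ for some $x \in \mN_\eps$. In either case $w$ lies within distance $\eps$ of $\mN_\eps$.

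Next I would bound $\abs{\mN_\eps}$ by comparing volumes of Euclidean balls in $\RR^n$. The open balls $B(x,\eps/2)$, $x \in \mN_\eps$, are pairwise disjoint: if some $z$ lay in $B(x,\eps/2)\cap B(y,\eps/2)$ with $x \ne y$, then by the triangle inequality $\norm{x-y} \le \norm{x-z} + \norm{z-y} < \eps$, contradicting separation. Since each $x \in \mN_\eps \subseteq \sphere$ has $\norm{x} = 1$, each such ball is contained in $B(0, 1+\eps/2)$. Letting $\omega_n$ denote the volume of the unit ball in $\RR^n$ and using that the volume of a ball scales as the $n$-th power of its radius,
$$
\abs{\mN_\eps}\,(\eps/2)^n \omega_n \;=\; \sum_{x \in \mN_\eps} \mathrm{vol}\bigl(B(x,\eps/2)\bigr) \;\le\; \mathrm{vol}\bigl(B(0,1+\eps/2)\bigr) \;=\; (1+\eps/2)^n \omega_n,
$$
whence $\abs{\mN_\eps} \le (1+\eps/2)^n/(\eps/2)^n = (1+2/\eps)^n$, as claimed.

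This argument presents no real obstacle; the only points requiring (minor) care are the existence of a maximal $\eps$-separated subset of $E$ and the disjointness and containment of the radius-$\eps/2$ balls, both of which follow at once from the triangle inequality. The volume comparison in the displayed inequality is the one substantive step.
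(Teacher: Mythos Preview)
Your proof is correct and follows essentially the same approach as the paper: take a maximal $\eps$-separated subset of $E$, observe it is an $\eps$-net by maximality, and bound its cardinality by the volumetric packing argument comparing the disjoint radius-$\eps/2$ balls to $B(0,1+\eps/2)$. The only differences are cosmetic---you spell out the Zorn's lemma justification and the triangle-inequality check for disjointness, which the paper leaves implicit.
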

\begin{proof}
Let $\mN_\eps\subset E$ be a maximal (under set inclusion) $\eps$-separated set in $E$. 
Observe that $\mN_\eps$ is an $\eps$-net of $E$. Indeed, if there exists $x\in E$ such that $x$ is distance at least $\eps$ from every element of $\mN_\eps$, then $\mN_\eps\cup \{x\}$ is still $\eps$-separated, contradicting maximality. 

Now we bound the cardinality of $\mN_\eps$ by a volumetric argument. Observe that $(\mN_\eps)_{\eps/2}$---the $\eps/2$ neighborhood of $\mN_{\eps}$---is a disjoint union of balls of radius $\eps/2$. 
Hence its volume is $|\mN_{\eps}|\times v_n (\eps/2)^n$, where $v_n$ is the volume of the unit ball in $\RR^n$.
On the other hand $(\mN_\eps)_{\eps/2}$ is contained in $B(0, 1+\eps/2)$, the volume of which is $v_n(1+\eps/2)^n$. 
The claim follows by monotonicity of Lebesgue measure under set inclusion. 
\end{proof}

\begin{proof}[Proof of Proposition~\ref{prop:ontails}]
Let $K>0$, and denote $\delta=a_1d/n$, $\gamma_0=a_3/\sqrt{d}$.
By our assumption of $\UTP(c_0,\gamma_0)$
and Lemma~\ref{lem:discrepancy} there are constants $\kappa_1,\kappa_2>0$ depending on $c_0,a_3,K$  such that $A$ has $\DP(\delta,\kappa_1,\kappa_2)$ except on an event of probability at most $n^{-K}$. 
Hence, letting $\Gg$ denote the event that $\DP(\delta,\kappa_1,\kappa_2)$ holds, it suffices to show
\begin{equation}\label{eq:suffices}
\P\big( \Gg\cap\big\{\lambda(A) \ge \alpha\sqrt{d}\big\} \big) \le 4e^{-n}
\end{equation}
for $\alpha$ sufficiently large depending on $K,c_0,a_1,a_2,a_3$.
Let $\eps>0$ to be fixed later, and let $\mN$ be an $\eps$-net of $\sphere_0$ of size at most $(1+2/\eps)^n$ (which exists by Lemma~\ref{lem:net}).
By the variational formula \eqref{variational}, continuity of $x\mapsto x^\tran Ax$
and the compactness of $\sphere_0$, there exists $\widetilde{x}\in \sphere_0$ such that $\lambda(A)=\widetilde{x}^\tran A\widetilde{x}$. 
Let $x\in \mN$ such that $\|x-\widetilde{x}\|\le \eps$. 
We have
\begin{align*}
\lambda(A) &\le |x^\tran Ax| + 2|(x-\widetilde{x})^\tran Ax|  + |(x-\widetilde{x})^\tran A(x-\widetilde{x})| \\
&\le |x^\tran Ax| + (2\eps+\eps^2) \lambda(A)
\end{align*}
where in the second line we rescaled $x-\widetilde{x}$ to lie in $\sphere_0$, and applied the variational formula \eqref{variational}. 
Taking $\eps=1/4$, upon rearranging we have
\begin{equation}
\lambda(A) \le 3 |x^\tran Ax|
\end{equation}
(say).
Note that with this choice of $\eps$ we have $|\mN|\le 9^n$.
We have shown that on the event $\big\{\lambda(A) \ge \alpha\sqrt{d}\big\}$ there exists $x\in \mN$ such that $|x^\tran Ax| \ge (\alpha/3)\sqrt{d}$. 
Hence,
\begin{align}
\P\Bigl[\, \Gg\cap\Big\{\lambda(A) &\ge \alpha\sqrt{d}\Big\}\Bigr] \le 
\sum_{x\in \mN}\P\Bigl[\, \Gg\cap\set{|x^\tran Ax| \ge (\alpha/3)\sqrt{d}}\Bigr]\notag\\
&\le \sum_{x\in \mN}\P\Bigl[ \good\cap\set{|f_{L(x)}(A)| \ge (\alpha/3)\sqrt{d} - |f_{H(x)}(A)|}\Bigr]\notag\\
&\le \sum_{x\in \mN}\P\Bigl[\, |f_{L(x)}(A)| \ge \Big(\alpha/3-\alpha_0\Big)\sqrt{d}\, \Bigr],	
\label{sum40}
\end{align}
where in the second line we applied the decomposition \eqref{LHdecomp}, and in the third line we
applied Lemma~\ref{lem:heavy} (taking the constant $C$ there to be $a_1$) in view of our restriction to $\good$.
Let $\beta=\alpha/3-\alpha_0-a_1-a_2$, and apply Lemma~\ref{lem:light.couples} and a union bound to show
\begin{align*}
  \P\Bigl[\, \Gg\cap\Big\{\lambda(A) &\ge \alpha\sqrt{d}\Big\}\Bigr] \le 
  4\abs{\mN}\exp\biggl(-\frac{c_0\beta^2n}{32(a_1 +\frac{\beta}{12})}\biggr)
  \leq 4(9^n) \exp\biggl(-\frac{c_0\beta^2n}{32(a_1 +\frac{\beta}{12})}\biggr).
\end{align*}
Taking $\alpha$ large enough establishes \eqref{eq:suffices}, proving the proposition.
\end{proof}
\begin{rmk}\label{rmk:ontails.constants}
  We now determine just how large $\alpha$ must be in Proposition~\ref{prop:ontails}.
If we take $\beta\ge\max(12a_1, 64/(3c_0))$, then
$$\frac{\beta^2}{a_1+\frac{\beta}{12}} \ge 6\beta \ge 128/c_0 \ge 32(1+\log 9)/c_0,$$
and we obtain \eqref{eq:suffices}.
Together with the assumption $\beta\ge 4a_1a_3$
required by Lemma~\ref{lem:light.couples}, this means we can take
\begin{equation}	\label{eq:alpha}
\alpha = 3(\alpha_0+a_1+a_2) + \max\Big( 36a_1, 12a_1a_3, 64/c_0\Big).
\end{equation}
Further unraveling the constants by looking back at Lemmas~\ref{lem:discrepancy} 
and~\ref{lem:heavy}, we have
\begin{align}\label{eq:alpha_0}
  \alpha_0 = 16 + 32a_1\bigl(1+e^2(1+\gamma_0)^2\bigr)+\frac{128}{c_0}(1+\gamma_0)(K+4)\left(1+\frac{1}{e^2(1+\gamma_0)^2\bigl(1+\log(1+\gamma_0)\bigr)}\right),
\end{align}
where $\gamma_0=a_3/\sqrt{d}$.
\end{rmk}

\begin{rmk}\label{rmk:two.thirds}
The restriction $d=O(n^{2/3})$ in Theorem~\ref{thm:main} arises as follows.
The idea of the uniform tails property is that it allows us
to show that for a vector $x\in\RR^n$ with $\abs{x}=1$ and $\sum_u x_u=0$,
\begin{align*}
  X-\E X:=\sum_{u,v\colon 0\leq x_ux_v\leq\frac{\sqrt{d}}{n}}x_ux_v(A_{uv}-\E A_{uv}) = O(\sqrt{d}) \text{ w.h.p.}
\end{align*}
The random variable $X$ has mean on the order of $d$ (this is because the sum is restricted
to positive $x_ux_v$).
Our construction of a size biased coupling for the uniform model is bounded with probability
$1- O(d/n)$, and Section~\ref{sec:size.biasing} then gives concentration for $X$ 
around its mean multiplied by a factor of $1+O(d/n)$, which introduces a shift of size $O(d^2/n)$. This needs to be $O(\sqrt{d})$
for the argument to work, leading to the condition $d=O(n^{2/3})$.

It is interesting to note that this barrier also appears in a recent result of Bauerschmidt, Knowles and Yau on the local semicircular law for the uniform random $d$-regular graph with $d$ growing to infinity with $n$ \cite{BKY}.
They also employ double switchings, though in a different manner from the present work, and their analysis requires taking $d=o(n^{2/3})$ (see \cite{BKY} for a more precise quantitative statement).
\end{rmk}

      \bibliographystyle{alpha}
    \bibliography{gap}

\end{document}